\renewcommand{\mod}{\operatorname{mod}\nolimits}
\DeclareMathOperator{\umod}{\underline{mod}}
\DeclareMathOperator{\omod}{\overline{mod}}
\DeclareMathOperator{\add}{add}
\DeclareMathOperator{\rad}{rad}
\DeclareMathOperator{\Hom}{Hom}
\DeclareMathOperator{\End}{End}
\newcommand{\uHom}{\operatorname{\underline{Hom}}\nolimits}
\newcommand{\oHom}{\operatorname{\overline{Hom}}\nolimits}
\renewcommand{\Im}{\operatorname{Im}\nolimits}
\newcommand{\Coker}{\operatorname{Coker}\nolimits}
\newcommand{\Soc}{\operatorname{Soc}\nolimits}
\newcommand{\Top}{\operatorname{Top}\nolimits}
\newcommand{\Tr}{\operatorname{Tr}\nolimits}
\newcommand{\Ext}{\operatorname{Ext}\nolimits}
\newcommand{\op}{{\operatorname{op}\nolimits}}
\newcommand{\Ab}{{\operatorname{Ab}\nolimits}}
\DeclareMathOperator{\CM}{{\operatorname{CM}\nolimits}}
\DeclareMathOperator{\domdim}{domdim}
\DeclareMathOperator{\gldim}{gldim}
\DeclareMathOperator{\resdim}{resdim}
\DeclareMathOperator{\id}{id}
\DeclareMathOperator{\pd}{pd}
\newcommand{\G}{\Gamma}
\renewcommand{\L}{\Lambda}
\newcommand{\U}{{\mathcal U}}
\newcommand{\Z}{{\mathcal Z}}
\newcommand{\C}{{\mathcal C}}
\newcommand{\D}{{\mathcal D}}
\newcommand{\I}{{\mathcal I}}
\newcommand{\X}{{\mathcal X}}
\newcommand{\Y}{{\mathcal Y}}
\renewcommand{\P}{{\mathcal P}}
\newcommand{\extto}{\xrightarrow}
\newtheorem{lem}{Lemma}[section]
\newtheorem{prop}[lem]{Proposition}
\newtheorem{cor}[lem]{Corollary}
\newtheorem{thm}[lem]{Theorem}
\newtheorem*{Theorem4.7}{Theorem 4.7}
\newtheorem*{Theorem4.5}{Theorem 4.5}
\newtheorem*{Theorem5.3}{Theorem 5.3}
\newtheorem*{Theorem5.10}{Theorem 5.10}
\theoremstyle{definition}
\newtheorem{defin}[lem]{Definition}
\newtheorem{remark}[lem]{Remark}
\newtheorem{example}[lem]{Example}
\def\blfootnote{\xdef\@thefnmark{}\@footnotetext}
\numberwithin{equation}{section}
\begin{document}

\title[Auslander--Gorenstein algebras and precluster tilting]{Auslander--Gorenstein algebras\\
and precluster tilting}

\author[Iyama]{Osamu Iyama}
\address{Osamu Iyama\\ 
Graduate School of Mathematics\\
Nagoya University\\
Chikusa-ku, Nagoya, 464-8602\\
Japan}
\email{iyama@math.nagoya-u.ac.jp}
\author[Solberg]{\O yvind Solberg}
\address{\O yvind Solberg\\
Institutt for matematiske fag\\
NTNU\\ 
N--7491 Trondheim\\ 
Norway}
\email{oyvind.solberg@ntnu.no}
\keywords{Auslander-Reiten theory; cluster tilting; Auslander algebra; relative homological algebra}

\begin{abstract}
  We generalize the notions of $n$-cluster tilting subcategories and
  $\tau$-selfinjective algebras into $n$-precluster tilting
  subcategories and $\tau_n$-selfinjective algebras,
  where we show that a subcategory naturally associated
  to $n$-precluster tilting subcategories has a higher
  Auslander--Reiten theory.  Furthermore, we give a bijection between
  $n$-precluster tilting subcategories and $n$-minimal Auslander--Gorenstein
  algebras, which is a higher dimensional analog of Auslander--Solberg
  correspondence (Auslander--Solberg, 1993) as well as a Gorenstein analog of
  $n$-Auslander correspondence (Iyama, 2007).  The Auslander--Reiten theory
  associated to an $n$-precluster tilting subcategory is used to
  classify the $n$-minimal Auslander--Gorenstein algebras into four disjoint
  classes. Our method is based on relative homological algebra due to
  Auslander--Solberg.
\end{abstract}



\dedicatory{To the memory of Maurice Auslander.}

\maketitle

\tableofcontents

\section{Introduction}\label{section:1}
\blfootnote{Part of the results were presented at the meeting ICRA
  XIV, Tokyo, Japan, on August 11th, 2010, and at the meeting
  ``Miniconference on selfinjective algebras'', Torun, Poland, on
  December 15th 2010.  Osamu Iyama's work on this project was
  partially supported by JSPS Grant-in-Aid for Scientific Research (B)
  16H03923, (C) 23540045 and (S) 15H05738. \O yvind Solberg's work on
  this project was partially supported by FRINAT grant number 23130
  from the Norwegian Research Council.}  Higher Auslander--Reiten
theory was introduced in \cite{I} by looking at $n$-cluster tilting
subcategories instead of the whole module category.  It is known that
any $n$-cluster tilting subcategory has $n$-almost split sequences,
and that finite $n$-cluster tilting subcategories correspond to
$n$-Auslander algebras \cite{I3}.  Moreover Artin algebras $\L$ which
have $n$-cluster tilting modules are called $n$-representation-finite
\cite{DI} and have been studied. (We do not assume $\gldim\L\le n$ in
this paper in contrast with several earlier papers \cite{HI,IO,IO2}.)
See for example \cite{ABa,HIO,HZ,IW,J1,J2,JK,Jo,Miz,OT} for further
results in higher Auslander--Reiten theory.

In this paper we introduce the weaker notions of $n$-precluster tilting
subcategories (Definition \ref{defin:n-precluster}) and $\tau_n$-selfinjective algebras (Definition \ref{taun selfinjective}). These notions
generalize and unify two seemingly different concepts, namely
(finite) $n$-cluster tilting subcategories \cite{I} and
$\tau$-selfinjective algebras (or $D\Tr$-selfinjective algebras)
\cite{ASIV}.  Both of these concepts are generalizations of algebras
of finite representation type and they are linked through the
Wedderburn correspondence introduced in \cite{A}.  The first class
corresponds to \emph{$n$-Auslander algebras} $\G$, that is, Artin
algebras $\G$ satisfying
\[\domdim\G \geq n + 1 \geq \gldim \G.\] 
The latter class corresponds to algebras $\G$ that are a
Gorenstein analog of Auslander algebras which satisfy
\[\domdim\G \geq 2 \geq \id_\G\G.\]
Here $\domdim$, $\gldim$ and $\id$ denote dominant dimension, global
dimension and injective dimension, respectively.
A central notion in higher dimensional Auslander--Reiten theory is
$n$-cluster tilting subcategories, which are the whole module categories
in the classical case $n=1$.
As in the process of going from classical
Auslander--Reiten theory to higher Auslander--Reiten theory, we
demonstrate that the number $2$ is quite symbolic, as we show that
$n$-precluster-tilting subcategories correspond to 
the following class of Artin algebras.

\begin{defin}
We call an Artin algebra $\G$ \emph{an $n$-minimal Auslander--Gorenstein algebra} if it satisfies
\[\domdim\G \geq n + 1 \geq \id_\G\G.\] 
\end{defin}
We will observe in Proposition \ref{basic properties of minimal AG}(b)
that this condition is left-right symmetric.  Recall that an Artin
algebra $\G$ is called \emph{Gorenstein} (or \emph{Iwanaga--Gorenstein} more precisely) if
$\id_\G\G$ and $\id\G_\G$ are finite \cite{EJ}, and 
a Gorenstein algebra is called \emph{Auslander--Gorenstein} if
the minimal injective coresolution
\[0\to\G\to I^0\to I^1\to\cdots\] 
of the $\Gamma$-module $\Gamma$ satisfies $\pd_\G I^i\le i$ holds for every $i\ge0$
\cite{FGR,AR,AR2}.  Thus our $n$-minimal Auslander--Gorenstein algebras
can be regarded as the most basic class among Auslander--Gorenstein algebras
since $\pd_\G I^i$ has the `minimal' value $0$ for $0\le i\le n$.

We have the following diagrams of generalizations of Auslander algebras
and representation-finite algebras:
\begin{eqnarray*}
&\xymatrix@R1.5em{
\text{Auslander algebras} \ar@{-->}[r]\ar@{-->}[d] &\text{Gorenstein
  algebras $\G$ with $\domdim\G\geq 2 \geq \id_\G\G$}\ar@{-->}[d]\\
\text{$n$-Auslander algebras}\ar@{-->}[r] & \text{$n$-minimal Auslander--Gorenstein algebras}
}&\\
&\xymatrix@R1.5em{
\text{representation-finite algebras} \ar@{-->}[r]\ar@{-->}[d] &\text{$\tau$-selfinjective algebras}\ar@{-->}[d]\\
\text{$n$-representation-finite algebras}\ar@{-->}[r] & \text{$\tau_n$-selfinjective algebras}
}&
\end{eqnarray*}
We introduce the concept of $n$-precluster tilting
subcategories to describe these algebras, where such a subcategory
$\C$ is defined to be a functorially finite generating-cogenerating
subcategory stable under the $n$-th Auslander--Reiten translate and
selforthogonal in the interval $[1, n-1]$ (that is, $\Ext^i(\C,\C) =
0$ for $i= 1,2,\ldots, n-1$).  More precisely, we show the following
results.
\begin{Theorem4.5}
Fix $n\geq 1$.  There is a bijection between
  Morita-equivalence classes of $n$-minimal Auslander--Gorenstein
  algebras and equivalence classes of finite $n$-precluster tilting
  subcategories $\C$ of Artin algebras,
  where the correspondences are given in
  \textup{Propositions \ref{prop:endoring}} and \textup{\ref{prop:redtoendoring}}.
\end{Theorem4.5}

For a Gorenstein algebra $\G$, we denote by
\[\CM\G=\{X\in\mod\G\mid\Ext^i_\G(X,\G)=0 \text{\ for\ }i>0\}\]
the category of \emph{maximal Cohen--Macaulay} $\G$-modules.  We denote by
$\underline{\CM}\,\G$ the stable category of $\CM\G$, that is, $\CM\G$
modulo the ideal in $\CM\G$ generated by $\add\G$. It is basic that
$\underline{\CM}\,\G$ forms a triangulated category.
On the other hand, for an $n$-precluster tilting subcategory $\C$ of $\mod\L$, let
\begin{eqnarray*}
\Z(\C)&=&\{ X\in\mod\L\mid \Ext^i_\L(\C,X)=0 \text{\ for\ }
  i=1,2,\ldots,n-1\},\\
\U(\C)&=&\Z(\C)/[\C],
\end{eqnarray*}
where $\Z(\C)/[\C]$ denotes the category $\Z(\C)$ modulo the ideal
$[\C]$ in $\Z(\C)$ generated by the subcategory $\C$. 
Note that $\Z(\C)=\mod\L$ for the case $n=1$. We show that the category
$\Z(\C)$ is a Frobenius category whose stable category is $\U(\C)$ (Proposition
\ref{Z(C) is Frobenius}).
Hence $\U(\C)$ forms a triangulated category. In fact it is an
analog of Calabi--Yau reduction of triangulated categories \cite{IY}.
   
\begin{Theorem4.7}
  Given an Artin algebra $\L$ with a finite $n$-precluster tilting
  subcategory $\C=\add M$, let $\G=\End_\L(M)$ be the
  corresponding $n$-minimal Auslander--Gorenstein algebra
  (see \textup{Theorem \ref{AS correspondence}}).  Then $\Z(\C)$ and
  $\CM\G$ are dual categories via the functors
  $\Hom_\L(-,M)\colon\Z(\C)\to\CM\G$ and
  $\Hom_\G(-,M)\colon\CM\G\to\Z(\C)$.  Moreover these functors induce triangle
  equivalences between $\U(\C)$ and $(\underline{\CM}\,\G)^{\op}$.
\end{Theorem4.7}
Furthermore we show that there is a higher Auslander--Reiten theory
also for $n$-precluster tilting subcategories, though with some
differences.  The first difference is that one cannot define $n$-fold
almost split sequences as for $n$-cluster tilting subcategories, but
one is forced to introduce $n$-fold almost split extensions (see
Definition \ref{def:nfoldalmostsplitextension}).  This is because in
this more general setting there does not exist a unique exact sequence
representing this extension.  Namely, we have the following results,
where we denote by $\P(\L)$ (respectively $\I(\L)$) the category of
finitely generated projective (respectively injective) $\L$-modules.

\begin{Theorem5.10}
Let $\C$ be an $n$-precluster tilting subcategory of $\mod\L$,
$X$ an indecomposable module in $\Z(\C)\setminus \P(\L)$, and $Y:=\tau_n(X)$ the corresponding
indecomposable module in $\Z(\C)\setminus \I(\L)$.
\begin{enumerate}[\rm(a)]
\item For each $0\le i\le n-1$,
an $n$-fold almost split extension in $\Ext^n_\L(X,Y)$ can be represented as
\[0\to Y\to C_{n-1}\to \cdots \to C_{i+1}\to Z_i\to C_{i-1}\to \cdots \to C_0\to X\to 0\]
with $Z_i$ in $\Z(\C)$ and $C_j$ in $\C$ for each $j\neq i$.
\item The following sequences are exact.\begin{align*}
&0\to \Hom_\L(\C,Y)\to \Hom_\L(\C,C_{n-1})\to\cdots\to \Hom_\L(\C,C_{i+1})\to\Hom_\L(\C,Z_i)\\
&\qquad\qquad\qquad\to\Hom_\L(\C,C_{i-1})\to\cdots\to\Hom_\L(\C,C_0)\to\rad_\L(\C,X)\to 0,\\
&0\to \Hom_\L(X,\C)\to \Hom_\L(C_0,\C)\to\cdots\to\Hom_\L(C_{i-1},\C)\to\Hom_\L(Z_i,\C)\\
&\qquad\qquad\qquad\to\Hom_\L(C_{i+1},\C)\to\cdots\to\Hom_\L(C_{n-1},\C)\to\rad_\L(Y,\C)\to 0.
\end{align*}
\item If $X$ and $Y$ do not belong to $\C$, then the $n$-fold almost split extension in \emph{(a)}
can be given as a Yoneda product of a minimal projective resolution of $X$ in $\Z(\C)$
\begin{equation*}
0\to\Omega_{\Z(\C)}^{i}(X)\to C_{i-1}\to \cdots \to C_{0}\to X\to 0,
\end{equation*}
an almost split sequence in $\Z(\C)$
\begin{equation*}
0\to\Omega_{\Z(\C)}^{-(n-i-1)}(Y)\to Z_i\to\Omega_{\Z(\C)}^{i}(X)\to 0,
\end{equation*}
and a minimal injective coresolution of $Y$ in $\Z(\C)$
\begin{equation*}
0\to Y\to C_{n-1}\to \cdots \to C_{i+1}\to\Omega_{\Z(\C)}^{-(n-i-1)}(Y)\to 0.
\end{equation*}
\end{enumerate}
\end{Theorem5.10}

We use this Auslander--Reiten theory to classify the
$n$-minimal Auslander--Goren\-stein algebras into four disjoint classes,
as was done for the case $n=1$ in \cite{ASIV}.

We refer to related results.  In \cite{CK}, the authors studied
$(n,m)$-ortho-symmetric modules, where our $n$-precluster tilting
modules are precisely $(n-1,0)$-ortho-symmetric.  In \cite{K}, the
author proved Theorem \ref{AS correspondence} independently.

The paper is organized as follows.  In the second section we recall
the relative homological algebra over Artin algebras we need (see
\cite{ASI, ASII, ASIII}) and some unpublished results of Maurice
Auslander and the second author.  In Section $3$ the notions of
$n$-precluster tilting subcategories and $\tau_n$-selfinjective
algebras are introduced and their basic properties are discussed.  We
show in the next section that there is a one-to-one correspondence
between finite $n$-precluster subcategories and $n$-minimal
Auslander--Gorenstein Artin algebras, where the $n$-Auslander algebras
are characterized within this class.  In the fifth section we show
that there is a meaningful higher Auslander--Reiten theory in
$n$-precluster tilting subcategories also.  This theory is transferred
in the next section over to the subcategory of maximal Cohen--Macaulay
modules over the $n$-minimal Auslander--Gorenstein Artin algebras.  In
the final section we use higher Auslander--Reiten theory to classify
the $n$-minimal Auslander--Gorenstein Artin algebras into four
disjoint classes.

\medskip\noindent{\bf Notations. } Throughout the paper, all modules
are left modules.  The composition of morphisms $f\colon X\to Y$ and
$g\colon Y\to Z$ is denoted by $gf\colon X\to Z$.

Let $R$ be a commutative Artinian ring and $\L$ an Artin $R$-algebra.
We denote by $\mod\L$ the category of finitely generated $\L$-modules,
by $\P(\L)$ (respectively $\I(\L)$) the category of finitely generated
projective (respectively injective) $\L$-modules, and by
$D\colon\mod\L\leftrightarrow\mod\L^{\op}$ the duality $\Hom_R(-,E)$,
where $E$ is the injective hull of the $R$-module $R/\rad R$.  We
denote by $\tau\colon\umod\L\to\omod\L$ the Auslander--Reiten
translation.

Let $\X$ be a full subcategory of $\mod\L$. We call
$\X$ a \emph{generator} (respectively
\emph{cogenerator}) if $\L\in\X$ (respectively $D\L\in\X$).  A
\emph{right $\X$-approximation} of $A\in\mod\L$ is a morphism
$f\colon X\to A$ with $X\in\X$ such that any morphism
$g\colon Y\to A$ with $Y\in\X$ factors through $f$.  We call
$\X$ \emph{contravariantly finite} if any $A\in\mod\L$ has a right
$\X$-approximation.  Dually, we define a \emph{left
  $\X$-approximation} and a \emph{covariantly finite} subcategory.  A
contravariantly and covariantly finite subcategory is called
\emph{functorially finite}.

\section{Preliminaries on relative homological algebra}\label{section:2} 

A systematic study of relative homological algebra over Artin algebras
was carried out in \cite{ASI, ASII, ASIII}. We recall the relevant
background and results, and in addition we
give some unpublished results of Maurice Auslander and the second
author.

\subsection{Relative homological algebra} We start with the setup for
relative homological algebra, where we assume throughout that $\L$ is
an Artin algebra. Relative homological algebra for us begins with
defining a set of exact sequences, and this is done through giving an
additive sub-bifunctor of $\Ext^1_\L(-,-)$ (see \cite{ASI} for further
details).  Let
\[F\subseteq \Ext^1_\L(-,-)\colon (\mod\L)^\op\times\mod\L \to \Ab\]
be an additive sub-bifunctor. Such an additive sub-bifunctor is
nothing else than, for each pair of $\L$-modules $C$ and $A$, a chosen
set of short exact sequences, $F(C,A)$, starting in $A$ and ending in
$C$, which is closed under pullbacks, pushouts and Baer sums (or
direct sums of short exact sequences). 

\begin{defin}
  An exact sequence $\eta\colon 0\to A\to B\to C\to 0$ is said to be
  \emph{$F$-exact} if $\eta$ is in $F(C,A)$.
\end{defin}

In the rest, we fix a subcategory $\X$ of $\mod\L$.  We consider the following
collection $F_\X(C,A)$ of short exact sequences given for a pair of
modules $A$ and $C$ in $\mod\L$:
\begin{multline}
F_\X(C,A) = \{ 0\to A\to B\to C\to 0\in\Ext^1_\L(C,A)\mid\notag\\
 \Hom_\L(X,B)\to \Hom_\L(X,C)\to 0 \textrm{\ exact for all $X$ in
   $\X$}\}.
\end{multline}
Dually one defines $F^\X$. By \cite[Proposition 1.7]{ASI} these collections induce
additive sub-bifunctors $F_\X$ and $F^\X$ of $\Ext^1_\L(-,-)$, and
\begin{equation}\label{defect formula}
F_\X=F^{\tau \X}\ \mbox{ and }\ F_{\tau^-\X}=F^\X
\end{equation}
by \cite[Proposition 1.8]{ASI}. 
If $\X=\add X$ holds for some $X$ in $\mod\L$, we denote $F_{\X}$
and $F^{\X}$ by $F_X$ and $F^X$ respectively.

We can endow $\mod\L$ with a new exact structure by the following result,
where we call a full subcategory $\C$ of $\mod\L$ \emph{$F$-extension closed} if,
for every $F$-exact sequence $0\to A\to B\to C\to0$ such that $A$ and $C$ are in $\C$,
$B$ is in $\C$.

\begin{prop}[\cite{DRSS}]\label{from F to exact}
Let $\X$ be a full subcategory of $\mod\L$ and $F=F^\X$ (respectively $F=F_\X$).
Then $\mod\L$ has a structure of an exact category whose short exact
sequences are precisely the $F$-exact sequences.
More generally, any $F$-extension closed subcategory $\C$ of $\mod\L$
has a structure of an exact category whose short exact sequences are precisely
the $F$-exact sequences contained in $\C$.
\end{prop}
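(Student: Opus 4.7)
The plan is to verify Quillen's axioms for an exact category, applied to the class of $F$-exact sequences, where $F=F^\X$ (the case $F=F_\X$ being dual). The axioms requiring real work are the composition axioms; the remaining ones reduce to general features of additive sub-bifunctors.

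First I would note that the axiom saying split sequences are $F$-exact and the axiom of closure under isomorphism are both immediate. For the pushout axiom, given an $F$-exact sequence $\eta\colon 0\to A\to B\to C\to 0$ and any morphism $A\to A'$, the pushout class $\eta'\in\Ext^1_\L(C,A')$ satisfies $\eta'\in F^\X(C,A')$ because $F^\X$ is an additive sub-bifunctor and is therefore closed under the covariant functoriality $\Ext^1_\L(C,A)\to\Ext^1_\L(C,A')$; the dual statement yields the pullback axiom. Concretely, $F^\X$-exactness of $\eta$ is equivalent to the vanishing, for each $X\in\X$ and each $f\colon A\to X$, of the pushout $f_*\eta\in\Ext^1_\L(C,X)$, which is preserved under a further pushout along $A\to A'$.

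The main point is the composition of $F^\X$-admissible monos (respectively epis). Suppose $0\to A\xrightarrow{f}B\to C\to 0$ and $0\to B\xrightarrow{g}B'\to C'\to 0$ are both $F^\X$-exact. Writing $C'':=\Coker(gf)$, the $3\times 3$-lemma produces an exact sequence $0\to C\to C''\to C'\to 0$ and an exact sequence $0\to A\to B'\to C''\to 0$. For any $X\in\X$, the map $\Hom_\L(B',X)\to\Hom_\L(A,X)$ factors as the composition $\Hom_\L(B',X)\to\Hom_\L(B,X)\to\Hom_\L(A,X)$ of two surjections, so the second sequence is $F^\X$-exact; for the first sequence, any $\alpha\colon C\to X$ pulls back to a map $B\to X$, extends along $g$ to a map $B'\to X$ that vanishes on $A$, and hence descends to a map $C''\to X$ extending $\alpha$, so $0\to C\to C''\to C'\to 0$ is $F^\X$-exact as well. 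Composition of $F^\X$-admissible epis is proved by the same kind of diagram chase: using the surjectivity provided by the two given sequences one builds, from any $\phi\colon K\to X$ on the kernel $K$ of the composed epi, an extension to $B$, by first extending its restriction to $K_1$ and then correcting the difference by an extension of the induced map $K_2\to X$.

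Finally, to deduce the statement for an $F$-extension closed subcategory $\C$, I would observe that all constructions above (pushouts, pullbacks, the middle and cokernel objects of composed monos) fit into $F$-exact sequences whose remaining two terms lie in $\C$; by the definition of $F$-extension closedness the new object lies in $\C$ as well. For example, the pushout of $A\to B$ along $A\to A'$ (all in $\C$) gives $0\to A'\to B'\to C\to 0$, which is $F$-exact with $A',C\in\C$, so $B'\in\C$; likewise $C''$ in the composition argument sits in the $F$-exact sequence $0\to C\to C''\to C'\to 0$ with $C,C'\in\C$. The main obstacle is the composition axiom for admissible monos and epis, which is precisely where the equivalence of $F^\X$-exactness with surjectivity of $\Hom_\L(-,X)$ has to be exploited through the $3\times 3$-lemma; everything else is formal.
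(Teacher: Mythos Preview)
Your direct verification of Quillen's axioms is correct. The paper itself takes a much shorter route: it simply cites \cite[Propositions~1.4 and~1.7]{DRSS} for the statement that $F^\X$ and $F_\X$ define exact structures on $\mod\L$, and then invokes the general fact that any extension-closed full subcategory of an exact category inherits the exact structure. Your argument is essentially what lies behind that citation---in \cite{DRSS} the relevant property is packaged as the notion of a \emph{closed} sub-bifunctor, and the substantive content is exactly the closure under composition of admissible monos and epis that you work out via the $3\times 3$-lemma. So the two approaches agree in content; yours supplies the details that the paper delegates to the reference.

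One small addition for completeness in the subcategory case: for composition of admissible epis in $\C$ you also need the kernel $K$ of the composed epi to lie in $\C$. This follows because the sequence $0\to K_1\to K\to K_2\to 0$ is itself $F^\X$-exact (any $\alpha\colon K_1\to X$ extends to $B$ by $F^\X$-exactness of the first epi and then restricts to $K$), and $K_1,K_2\in\C$ since the given epis are admissible in $\C$.
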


\begin{proof}
  The first assertion follows from \cite[Propositions 1.4 and 1.7]{DRSS}.  The
  second assertion is a general property of extension closed
  subcategories of an exact category.
\end{proof}

We denote by $(\mod\L,F)$ and $(\C,F)$ the exact categories given in
Proposition \ref{from F to exact}.

Recall the following definitions from \cite{ASI}, which coincide with
  the corresponding notions in the exact category $(\mod\L,F)$.

\begin{defin}
\begin{enumerate}[(i)]
\item
A $\L$-module $P$ is said to be \emph{$F$-projective} if
  all $F$-exact sequences $0\to A\to B\to P\to 0$ split. The full
  subcategory of $\mod\L$ consisting of all $F$-projective modules is
  denoted by $\P(F)$.
\item
A $\L$-module $I$ is said to be \emph{$F$-injective} if
  all $F$-exact sequences $0\to I\to B\to C\to 0$ split. The full
  subcategory of $\mod\L$ consisting of all $F$-injective modules is
  denoted by $\I(F)$.
\item
$F$ has \emph{enough $F$-projectives}
  (respectively \emph{$F$-injectives}) if for each $C$ (respectively $A$) in $\mod\L$ there exists an
  $F$-exact sequence 
  \[0\to C'\to P\to C\to 0\] 
  with $P$ in $\P(F)$ (respectively $0\to A\to I\to A'\to 0$ with $I$ in
  $\I(F)$).
\end{enumerate}
\end{defin}

The $F_\X$-projectives and $F_\X$-injectives are given as
\begin{equation}\label{P(FX) and I(FX)}
\P(F_\X)=\add\{\X, \P(\L)\}\ \mbox{ and }\ \I(F_\X)=\add\{\tau\X, \I(\L)\}
\end{equation}
by \cite[Proposition 1.10]{ASI}. 
For example, we have $\P(F_{\L})=\P(\Ext^1_\L(-,-))=\P(\L)$ and $\I(F^{D\L})=\I(\Ext^1_\L(-,-))=\I(\L)$.
Furthermore an additive sub-bifunctor
$F$ of $\Ext^1_\L(-,-)$ has enough $F$-projectives and $F$-injectives
if and only if $\P(F)$ is functorially finite in $\mod\L$ and
$F=F_{\P(F)}$ (see \cite[Corollary 1.13]{ASI}).  In this case we
denote by $\Omega_F^1(X)$ the kernel of the $F$-projective cover of $X$.
The $\L$-module $\Omega_F^{-1}(Y)$ is defined dually.

Assume from now on that $F$ is an additive sub-bifunctor of
$\Ext^1_\L(-,-)$ with enough $F$-projectives and $F$-injectives.  Recall that
an exact sequence
\[\cdots\to C_{i+1}\xrightarrow{f_{i+1}}C_i\xrightarrow{f_i}C_{i-1}\to\cdots\]
is called \emph{$F$-exact} if all the short exact sequences $0\to\Im
f_{i+1}\to C_i\to\Im f_i\to 0$
are $F$-exact.  Given two modules $A$ and $C$ in
$\mod\L$, there exist $F$-exact sequences
\[\mathbb{P}\colon \cdots\to P_2\to P_1\to P_0\to C\to 0\]
with $P_i$ in $\P(F)$ and 
\[\mathbb{I}\colon 0\to A\to I^0\to I^1\to I^2\to \cdots\]
with $I^j$ in $\I(F)$. We call these exact sequences an
\emph{$F$-projective resolution} and an \emph{$F$-injective coresolution} of $C$
and $A$, respectively. 
For $i\ge1$, the $i$-th homologies of the complexes $\Hom_\L(\mathbb{P},A)$ and
$\Hom_\L(C,\mathbb{I})$ are isomorphic, and denoted by $\Ext^i_F(C,A)$
and called the $i$-th $F$-relative extension group
(see \cite[Section 2]{ASI}). Then $\Ext^1_F(-,-)$ is naturally identified with $F$. Using $\Ext^i_F(-,-)$,
we define $F$-relative projective dimension,
$F$-relative injective dimension and $F$-relative global dimension as in the
absolute setting and the basic properties are the same in the $F$-relative
setting. On the other hand, we denote by
\[\uHom_F(A,C)\ \mbox{ (respectively $\oHom_F(A,C)$)}\]
$\Hom_\L(A,C)$ modulo all the homomorphisms factoring
through an $F$-projective (respectively $F$-injective) module.
The \emph{$F$-stable category}
\[\umod_F\L\ \mbox{ (respectively $\omod_F\L)$}\]
has the same objects as $\mod\L$, and the morphism
  sets are given by $\uHom_F(A,C)$ (respectively $\oHom_F(A,C)$).
For $F=F_{\X}$, the Auslander--Reiten translation
$\tau\colon\umod\L\simeq\omod\L$ induces the $F$-relative Auslander--Reiten translation\begin{equation}\label{relative AR}
\tau\colon\umod_F\L\simeq\omod_F\L.
\end{equation}
  Another central result that has an analog in the $F$-relative setting
  is the Auslander--Reiten formula, which we recall next.

\begin{prop}[\protect{\cite[Proposition 2.3]{ASI}}]\label{prop:AR-formula}
  Let $F$ be an additive sub-bifunctor of $\Ext_\L^1(-,-)$ with enough
  $F$-projectives (and $F$-injectives).  Then for all modules $A$ and $C$ in
  $\mod\L$ we have an isomorphism
\[\Ext^1_F(C,\tau A )\simeq D\uHom_{F}(A,C).\]
\end{prop}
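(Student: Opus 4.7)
The plan is to deduce the $F$-relative Auslander--Reiten formula from the classical Auslander--Reiten formula $\Ext^1_\L(C,\tau A)\simeq D\uHom(A,C)$, by viewing each side of the desired isomorphism as a distinguished subgroup of its classical counterpart. Since $F\subseteq\Ext^1_\L$ is a sub-bifunctor, there is a natural inclusion $\Ext^1_F(C,\tau A)\hookrightarrow\Ext^1_\L(C,\tau A)$; dually, since $\P(\L)\subseteq\P(F)$ (as $\L\in\P(F)$), the canonical surjection $\uHom(A,C)\twoheadrightarrow\uHom_F(A,C)$ dualizes to an inclusion $D\uHom_F(A,C)\hookrightarrow D\uHom(A,C)$. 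The task then reduces to showing that the classical isomorphism identifies these two subgroups.

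The key tool I will invoke is the naturality of the classical formula in the first variable: writing $\phi_\eta$ for the functional corresponding to $\eta$, for every $g\colon X\to X'$ and $\eta\in\Ext^1_\L(X',\tau A)$ one has $\phi_{\eta g}(h)=\phi_\eta(gh)$ for all $h\colon A\to X$, where $\eta g$ denotes the pullback. For the forward direction, given $\eta\in\Ext^1_F(C,\tau A)$ and a morphism $f=gh\colon A\to X\to C$ factoring through some $X\in\P(F)$, the pullback $\eta g$ lies in $\Ext^1_F(X,\tau A)=0$ since $X$ is $F$-projective, hence $\phi_{\eta g}=0$, and naturality forces $\phi_\eta(f)=\phi_{\eta g}(h)=0$. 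Thus $\phi_\eta$ vanishes on every morphism factoring through $\P(F)$, placing it in $D\uHom_F(A,C)$.

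For the reverse direction, the essential input is the characterization $F=F_{\P(F)}$ from \cite[Corollary 1.13]{ASI}, which ensures that $\eta\in\Ext^1_\L(C,\tau A)$ is $F$-exact if and only if the pullback $\eta g$ splits for every $g\colon X\to C$ with $X\in\P(F)$. Applying the classical formula to $X$, this splitting amounts to the vanishing of $\phi_\eta(gh)$ for all $h\colon A\to X$; since every such composition $gh$ factors through an $F$-projective, this vanishing is precisely the hypothesis that $\phi_\eta$ lies in $D\uHom_F(A,C)$. I expect the main obstacle to be this reverse implication: the a priori weak hypothesis that $\phi_\eta$ kills morphisms factoring through $\P(F)$ must be upgraded to genuine $F$-exactness of the extension, and this upgrade depends essentially on the structural identity $F=F_{\P(F)}$ --- that is, on having enough $F$-projectives.
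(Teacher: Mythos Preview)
The paper does not prove this proposition; it is quoted from \cite[Proposition 2.3]{ASI} as a known result, so there is no proof in the present paper to compare against.

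That said, your argument is correct and self-contained. The reduction to the classical Auslander--Reiten formula via the natural inclusions $\Ext^1_F(C,\tau A)\hookrightarrow\Ext^1_\L(C,\tau A)$ and $D\uHom_F(A,C)\hookrightarrow D\uHom_\L(A,C)$ is sound, and naturality of the classical isomorphism in the variable $C$ does give $\phi_{\eta g}(h)=\phi_\eta(gh)$ as you use it. The reverse direction correctly hinges on $F=F_{\P(F)}$: this does follow from the hypothesis of enough $F$-projectives alone (given $\eta\in F_{\P(F)}(C,A)$, take an $F$-exact sequence $0\to K\to P\to C\to 0$ with $P\in\P(F)$; the lifting of $P\to C$ through $\eta$ exhibits $\eta$ as a pushout of this $F$-exact sequence, hence $\eta\in F$). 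So your proof is complete.
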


In general the higher $F$-relative extension groups $\Ext^i_F(C,A)$ are
not necessarily related to the higher absolute extension groups
$\Ext^i_\L(C,A)$. However, in some situations one can compute the
absolute extensions by $F$-relative ones, as described in the next result.

\begin{prop}[\protect{\cite[Proposition 1.3]{L}}]\label{prop:relative=absolute}
Let $\X$ be a functorially finite subcategory of $\mod\L$.  
\begin{enumerate}[\rm(a)]
\item
A module $C$ in $\mod\L$
satisfies $\Ext^i_\L(C,\X)=0$ for $0<i<n$ if and only if
$\Ext^i_{F^\X}(C,A)=\Ext^i_\L(C,A)$ holds for $0<i<n$ and for all $A$ in $\mod\L$. 
\item
A module $A$ in $\mod\L$
satisfies $\Ext^i_\L(\X,A)=0$ for $0<i<n$ if and only if
$\Ext^i_{F_\X}(C,A)=\Ext^i_\L(C,A)$ for $0<i<n$ and for all $C$ in $\mod\L$.  
\end{enumerate}
\end{prop}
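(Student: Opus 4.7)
The plan is to reduce (a) to (b) via the $R$-duality $D\colon\mod\L\leftrightarrow\mod\L^{\op}$: under $D$, the sub-bifunctor $F^\X$ on $\mod\L$ corresponds to $F_{D\X}$ on $\mod\L^{\op}$, and the condition $\Ext^i_\L(C,\X)=0$ corresponds to $\Ext^i_{\L^{\op}}(D\X,DC)=0$, so (a) is just (b) applied to $\L^{\op}$ and the subcategory $D\X$. It therefore suffices to prove (b). The backward direction of (b) is an immediate specialization: assuming the conclusion for every $C$, take $C$ in $\X\subseteq\P(F_\X)$ (using (\ref{P(FX) and I(FX)})); then the relative Ext vanishes for $i>0$ by $F_\X$-projectivity, forcing $\Ext^i_\L(\X,A)=0$ in the range.

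For the forward direction I would proceed as follows. Functorial finiteness of $\X$ guarantees enough $F_\X$-projectives, so pick an $F_\X$-projective resolution $\cdots\to P_1\to P_0\to C\to 0$ with each $P_i\in\add\{\X,\P(\L)\}$. The hypothesis $\Ext^j_\L(\X,A)=0$ for $0<j<n$, together with trivial vanishing on $\P(\L)$, yields $\Ext^j_\L(P_i,A)=0$ for all $i\ge 0$ and $0<j<n$. Writing $\Omega^i=\Omega^i_{F_\X}(C)$, I would then perform two parallel dimension shifts along the $F_\X$-exact sequences $0\to\Omega^{i+1}\to P_i\to\Omega^i\to 0$: one in the $F_\X$-relative setting, valid because each $P_i$ is $F_\X$-projective, and one in the absolute setting, valid by the Ext-vanishing just recorded. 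Both collapse to the same cokernel
\[\Coker\bigl(\Hom_\L(P_{i-1},A)\to\Hom_\L(\Omega^i,A)\bigr)\]
for $1\le i\le n-1$, so the natural comparison map $\Ext^i_{F_\X}(C,A)\to\Ext^i_\L(C,A)$ is an isomorphism in this range.

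The main technical point is bookkeeping for the absolute dimension shift: it collapses correctly precisely because $\Ext^j_\L(P_i,A)$ vanishes on both sides of each connecting homomorphism throughout $0<j<n$, which is why the conclusion stops at $i=n-1$. A small base-case observation underlies the iteration, namely that the hypothesis $\Ext^1_\L(\X,A)=0$ forces every $\L$-extension $0\to A\to B\to M\to 0$ to be $F_\X$-exact (because $\Hom_\L(X,B)\twoheadrightarrow\Hom_\L(X,M)$ for $X\in\X$), yielding the equality $\Ext^1_{F_\X}(-,A)=\Ext^1_\L(-,A)$ as subfunctors of $\Ext^1_\L(-,A)$ and anchoring the comparison at $i=1$.
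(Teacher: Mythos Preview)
Your argument is correct. The paper, however, does not argue directly: it proves only (a), by citing \cite[Proposition~1.3]{L} (which assumes in addition that $\X$ is a cogenerator) and then dropping that extra hypothesis via the observation $F^\X=F^{\X'}$ for $\X'=\add\{\X,\I(\L)\}$. So the paper's route is ``external citation plus a one-line reduction'', while yours is a self-contained dimension shift after reducing (a) to (b) by the duality $D$. The underlying computation is the same one that \cite{L} carries out; what you gain is an elementary, reference-free proof, whereas the paper's version is shorter on the page precisely because the work is offloaded. One small remark: you speak of ``the natural comparison map'' $\Ext^i_{F_\X}(C,A)\to\Ext^i_\L(C,A)$; this map does exist (lift the identity on $C$ to a chain map from an ordinary projective resolution to the $F_\X$-projective one), but your argument already shows more directly that both sides coincide with the same cokernel, so invoking naturality of the comparison is not strictly needed.
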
 
\begin{proof}
  We only show (a). This is proved in \cite[Proposition 1.3]{L} under
  the assumption that $\X$ is a cogenerator.
  We can drop it by considering $\X'=\add\{\X,\I(\L)\}$ and using
  $F^\X=F^{\X'}$.
\end{proof}

\subsection{The Auslander--Reiten translation revisited} 
Next we recall some unpublished results of Maurice Auslander and the
second author that we need later.  We show that the Auslander--Reiten
translation $\tau$ gives a bijection between $\Ext^1_F(C,A)$ and
$\Ext^1_G(\tau(C),\tau(A))$ for certain $F$ and $G$, and moreover
$\tau$ commutes with relative syzygies.

The transpose is a duality $\Tr\colon \umod\L\to \umod\L^\op$ \cite{ABr}.
However, from an exact sequence 
\[0\to A\to B\to C\to 0\] 
in $\mod\L$, there is not necessarily a naturally associated exact
sequence
\[0\to \Tr C\to \Tr B\oplus X\to \Tr A\to 0\] 
in $\mod\L^{\op}$. We show that when restricting to appropriate classes of exact
sequences we have such a natural correspondence.

\begin{prop}\label{lem1.1}
  Let $\X$ be a functorially finite generator-cogenerator in $\mod\L$,
  and let $F=F^\X$, $G^\op=F^{D(\X)}$ and $G=F_\X$.
\begin{enumerate}[\rm(a)]
\item If $0\to A\to B\to C\to 0$ is $F$-exact in $\mod\L$, then there
  are a $G^\op$-exact sequence
\[0\to \Tr C\to \Tr B\oplus P\to \Tr A\to 0\] in $\mod\L^\op$ for some
projective $\L^\op$-module $P$ and a $G$-exact sequence
\[0\to \tau(A)\to \tau(B)\oplus I\to \tau(C)\to 0\]
in $\mod\L$ for some injective $\L$-module $I$.
\item For all $A$ and $C$ in $\mod\L$, we have functorial isomorphisms
\[F(C,A)\simeq G^{\op}(\Tr A,\Tr C)\simeq G(\tau(C),\tau(A)).\]
\end{enumerate}
\end{prop}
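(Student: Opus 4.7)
The plan is to build the transpose and $\tau$-translated sequences via a Horseshoe construction followed by a long-exact-cohomology argument, and to derive the bijections in (b) from the classical dualities $\Tr\colon\umod\L\simeq\umod\L^\op$ and $\tau\colon\umod\L\simeq\omod\L$.

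For part (a), take minimal projective presentations $P_1^A\to P_0^A\to A\to 0$ and $P_1^C\to P_0^C\to C\to 0$, and form a Horseshoe presentation $P_1^A\oplus P_1^C\to P_0^A\oplus P_0^C\to B\to 0$ of $B$; this fits into a split short exact sequence of two-term complexes of projectives. Denote by $\Tr B^H$ the transpose of $B$ computed from this Horseshoe presentation. For any $X\in\X$, applying $\Hom_\L(-,X)$ gives a short exact sequence of two-term complexes, whose long exact cohomology sequence is
\[0\to\Hom(C,X)\to\Hom(B,X)\to\Hom(A,X)\to X\otimes_\L\Tr C\to X\otimes_\L\Tr B^H\to X\otimes_\L\Tr A\to 0,\]
using that the two cohomology groups of $\Hom_\L(P_\bullet^M,X)$ are $\Hom_\L(M,X)$ and $X\otimes_\L\Tr M$. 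Since the original sequence is $F^\X$-exact the first three terms form an exact sequence, so the connecting map vanishes and the last three terms form a short exact sequence. Specialising to $X=\L$ (available because $\X$ is a generator) yields $0\to\Tr C\to\Tr B^H\to\Tr A\to 0$ in $\mod\L^\op$; writing $\Tr B^H\simeq\Tr B\oplus P$ with $P$ projective (a non-minimal presentation differs from the minimal one by a split projective complex) gives the required transpose sequence. Running the computation with general $X\in\X$ and then applying the duality $D$---via the natural isomorphisms $D(X\otimes_\L\Tr M)\simeq\Hom_{\L^\op}(\Tr M,DX)\simeq\Hom_\L(X,D\Tr M)$---simultaneously produces the $G^\op$-exactness of $0\to\Tr C\to\Tr B\oplus P\to\Tr A\to 0$ and the $G$-exactness of the corresponding $\tau$-sequence $0\to\tau A\to\tau B\oplus I\to\tau C\to 0$ with $I=DP$ injective.

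For part (b), the construction in (a) furnishes a natural map $F(C,A)\to G^\op(\Tr A,\Tr C)$ given by $\eta\mapsto\Tr\eta$, which is injective since it restricts the classical isomorphism $\Ext^1_\L(C,A)\simeq\Ext^1_{\L^\op}(\Tr A,\Tr C)$ induced by the duality $\Tr$ on stable categories. Surjectivity follows by symmetry: apply the same construction in $\mod\L^\op$ to the functorially finite generator-cogenerator $D\X$, noting $F^{D\X}=G^\op$ and $D(D\X)=\X$; combined with $\Tr\Tr\simeq\id$ on the stable category this produces an inverse $G^\op(\Tr A,\Tr C)\to F(C,A)$. The remaining isomorphism $G^\op(\Tr A,\Tr C)\simeq G(\tau C,\tau A)$ is immediate from the duality $D$, since the adjunctions invoked above identify the two relative exact structures. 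The main obstacle is the careful bookkeeping of the projective/injective summand---equivalently, the non-uniqueness of $\Tr$ and $\tau$ up to such summands---through the direct and inverse constructions and the checking of well-definedness on $F$-equivalence classes; once this is handled and the two cohomology groups of $\Hom_\L(P_\bullet^M,X)$ are identified as above, the proof reduces to diagram chasing.
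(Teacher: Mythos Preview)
Your argument for (a) is correct and takes a more computational route than the paper's. The paper obtains exactness of the transpose sequence via the Snake Lemma applied to the diagram obtained by dualizing the Horseshoe with $(-)^*=\Hom_\L(-,\L)$, and then verifies $G^{\op}$-exactness abstractly using that $\Tr$ is a duality on stable categories: surjectivity of $\uHom_\L(B,\X)\to\uHom_\L(A,\X)$ transports to surjectivity of $\uHom_{\L^{\op}}(\Tr\X,\Tr B)\to\uHom_{\L^{\op}}(\Tr\X,\Tr A)$, and one then uses $F_{\Tr\X}=F^{D\X}=G^{\op}$. Your long-exact-cohomology argument with the identification $H^1(\Hom_\L(P_\bullet^M,X))\simeq\Tr M\otimes_\L X$ handles exactness and both relative exactness conditions in one stroke via tensor--Hom adjunction, which is a tidy simplification.

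In (b), your injectivity argument has a gap: there is no ``classical isomorphism'' $\Ext^1_\L(C,A)\simeq\Ext^1_{\L^{\op}}(\Tr A,\Tr C)$ on the full $\Ext$-groups to restrict from. The Horseshoe-plus-transpose construction does not produce a short exact sequence for an arbitrary $\eta\in\Ext^1_\L(C,A)$---the map $\Tr C\to\Tr B^H$ fails to be injective precisely when $\Hom_\L(B,\L)\to\Hom_\L(A,\L)$ is not surjective---so there is no ambient isomorphism to appeal to. Fortunately this step is redundant: your symmetry argument already produces a two-sided inverse via $\Tr\Tr\simeq\id$ in the stable category, exactly as the paper does, so both maps are isomorphisms without any separate injectivity check. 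Simply delete the injectivity sentence and let the mutually inverse constructions do the work.
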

\begin{proof} 
(a) Let $\eta\colon 0\to A\to B\extto{f} C\to 0$ be an $F$-exact
sequence. By the Horseshoe Lemma we have the following commutative
diagram
\[\xymatrix@R=1em{
0\ar[r] & P_1\ar[r]\ar[d] & P_1\oplus Q_1\ar[r]\ar[d] & Q_1\ar[r]\ar[d] & 0\\
0\ar[r] & P_0\ar[r]\ar[d] & P_0\oplus Q_0\ar[r]\ar[d] & Q_0\ar[r]\ar[d] & 0\\
0\ar[r] & A  \ar[r]\ar[d] & B\ar[r]\ar[d]   & C\ar[r]\ar[d]   & 0\\
        & 0 & 0 & 0 &
}\]
with $P_i$ and $Q_i$ projective $\L$-modules for $i=0,1$.
Let $(-)^*=\Hom_\L(-,\L)$.
Then this induces the following
commutative diagram
\[\xymatrix@R=1em{
        & 0\ar[d]           & 0\ar[d]           & 0\ar[d]          & \\
0\ar[r] & C^*\ar[r]\ar[d]    & B^*\ar[r]\ar[d]   & A^*\ar[r]\ar[d]  & 0\\
0\ar[r] & Q^*_0\ar[r]\ar[d] & P^*_0\oplus Q^*_0\ar[r]\ar[d] & P^*_0\ar[r]\ar[d] & 0\\
0\ar[r] & Q^*_1\ar[r]\ar[d] & P^*_1\oplus Q^*_1\ar[r]\ar[d] & P^*_1\ar[r]\ar[d] & 0\\
\eta'\colon 0\ar[r] & \Tr C\ar[r]\ar[d] & \Tr B\oplus P\ar[r]^{g}\ar[d] 
                                                & \Tr A\ar[r]\ar[d] & 0\\
        & 0 & 0 & 0 &
}\]
where $P$ is a projective $\L^\op$-module. The columns are exact by the
definition of $\Tr$, and the top row is exact because $\eta$ is $F$-exact
and $\X$ contains $\L$. Since the second and the third rows are split exact,
the Snake Lemma shows that the bottom row $\eta'$ is also exact.
It remains to show that $\eta'$ is $G^{\op}$-exact.
Then the dual of this sequence is $G$-exact.

Since $\eta$ is $F$-exact, the map
$f\colon\uHom_{\L}(B,\X)\to\uHom_{\L}(A,\X)$ is surjective.  Since
$\Tr$ is a duality, the map
$g=\Tr f\colon\uHom_{\L^{\op}}(\Tr\X,\Tr
B)\to\uHom_{\L^{\op}}(\Tr\X,\Tr A)$ is surjective.  By a standard
argument, the map
$g\colon\Hom_{\L^{\op}}(\Tr\X,\Tr B\oplus
P)\to\Hom_{\L^{\op}}(\Tr\X,\Tr A)$ is also surjective.  Since
$\tau\Tr\X=D(\X)$ and hence $F_{\Tr\X}=G^{\op}$ holds, $\eta'$ is
$G^{\op}$-exact.

(b) By (a), we have a morphism $F(C,A)\to G^{\op}(\Tr A,\Tr C)$ of bifunctors.
Since the same argument gives the inverse morphism $G^{\op}(\Tr A,\Tr C)\to F(C,A)$ of bifunctors,
we have the first desired isomorphism. The second isomorphism follows
immediately by applying the duality $D$.
\end{proof}

We end this subsection by showing how this induces isomorphisms on
relative extension groups and relative stable homomorphism sets.
\begin{thm}\label{prop:exttauisom}\footnote{Theorem \ref{prop:exttauisom} was obtained by Maurice
    Auslander and the second author, but they never got to be
    published in printed form before now. However, the results were
    presented at a seminar at the University of Bielefeld, Germany.}    
Let $\X\subseteq \mod\L$ be a functorially finite
generator-cogenerator, and let $F=F^\X$ and $G=F_\X$ be the
corresponding additive sub-bifunctors of $\Ext^1_\L(-,-)$. Then the following
is true.
\begin{enumerate}[\rm(a)]
\item $\tau$ gives an equivalence $\umod_F\L\simeq\umod_G\L$.
\item The following diagrams commute up to isomorphisms of functors.
\[\xymatrix@R=1.5em{\umod_F\L\ar[r]^{\tau}\ar[d]^{\Omega_F}&\umod_G\L\ar[d]^{\Omega_G}\\
\umod_F\L\ar[r]^{\tau}&\umod_G\L}\ \ \ \ \ 
\xymatrix@R=1.5em{\omod_F\L\ar@{<-}[r]^{\tau^-}\ar@{<-}[d]^{\Omega^-_F}&\omod_G\L\ar@{<-}[d]^{\Omega^-_G}\\
\omod_F\L\ar@{<-}[r]^{\tau^-}&\omod_G\L}\]
\item $\tau$ induces a functorial isomorphism in both variables 
\[\varphi_n=\varphi_{C,A,n}\colon \Ext^n_F(C,A)\simeq
\Ext^n_G(\tau(C),\tau(A))\]
for all pairs of $A$ and $C$ in $\mod\L$ and $n\geq 1$. 
\end{enumerate}
\end{thm}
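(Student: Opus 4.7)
The plan is to use Proposition \ref{lem1.1} as the engine: part (a) will follow from a direct comparison of the relative projectives, part (b) will be obtained by applying Proposition \ref{lem1.1}(a) to an $F$-projective cover, and part (c) will be deduced from the $n=1$ case of Proposition \ref{lem1.1}(b) by dimension shifting via (b).

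For part (a), first I would identify the relative projectives: by \eqref{defect formula} we have $F=F^\X=F_{\tau^-\X}$, so \eqref{P(FX) and I(FX)} yields $\P(F)=\add\{\tau^-\X,\P(\L)\}$, whereas $\P(G)=\add\{\X,\P(\L)\}$. Since $\X$ is a cogenerator, $\I(\L)\subseteq\add\X\subseteq\P(G)$, and $\tau$ sends $\tau^-\X$ into $\X$ modulo $\I(\L)$. Hence any morphism factoring through $\P(F)$ is carried by $\tau$ to a morphism factoring through $\P(G)$, and the classical equivalence $\tau\colon\umod\L\simeq\omod\L$ descends to a well-defined functor $\umod_F\L\to\umod_G\L$. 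The dual construction, using that $\X$ is also a generator so that $\P(\L)\subseteq\add\X\subseteq\I(F)$, produces the quasi-inverse coming from $\tau^-$.

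For part (b), I would apply Proposition \ref{lem1.1}(a) to the $F$-exact sequence $0\to\Omega_F(C)\to P\to C\to 0$, where $P\to C$ is an $F$-projective cover. This produces a $G$-exact sequence
\[0\to\tau\Omega_F(C)\to\tau(P)\oplus I\to\tau(C)\to 0\]
with $I$ injective. Since $P\in\add\{\tau^-\X,\P(\L)\}$, we have $\tau(P)\in\add\X$ modulo $\I(\L)$, and combined with $\I(\L)\subseteq\add\X$ this places the middle term in $\P(G)$. Thus the sequence exhibits $\tau\Omega_F(C)$ as a $G$-syzygy of $\tau(C)$, giving $\Omega_G\tau(C)\simeq\tau\Omega_F(C)$ in $\umod_G\L$. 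The dual argument, starting from an $F$-injective coresolution, establishes the commutative square for $\Omega^-$ and $\tau^-$.

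For part (c), I would proceed by induction on $n$. The base case $n=1$ is exactly Proposition \ref{lem1.1}(b), which is already functorial in both variables. For $n\ge2$, the standard dimension shift (valid because $F$ and $G$ have enough projectives) gives $\Ext^n_F(C,A)\simeq\Ext^{n-1}_F(\Omega_F(C),A)$ and similarly for $G$; combining with the inductive hypothesis and part (b) produces
\[\Ext^n_F(C,A)\simeq\Ext^{n-1}_F(\Omega_F(C),A)\simeq\Ext^{n-1}_G(\tau\Omega_F(C),\tau(A))\simeq\Ext^{n-1}_G(\Omega_G\tau(C),\tau(A))\simeq\Ext^n_G(\tau(C),\tau(A)).\]
The main obstacle is bookkeeping of naturality and independence from the choice of cover: the syzygy identification in (b) and the isomorphism of Proposition \ref{lem1.1}(b) are canonical only up to projectives and injectives, so one must verify that the composite above is bifunctorial and does not depend on the chosen resolutions. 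The generator-cogenerator hypothesis on $\X$ is used repeatedly to absorb injective and projective discrepancies into $\P(F)$ and $\P(G)$, respectively.
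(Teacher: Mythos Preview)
Your proof is correct and follows essentially the same route as the paper: Proposition~\ref{lem1.1} supplies the base case, part (b) is obtained by pushing an $F$-projective cover through Proposition~\ref{lem1.1}(a), and (c) is dimension shifting via (b). The one place the paper is slightly slicker is (a): rather than tracking $\P(F)$ and $\P(G)$ by hand, it observes that since $\X$ is a generator-cogenerator one has $\I(F)=\add\X=\P(G)$, so $\omod_F\L$ and $\umod_G\L$ are literally the same category, and the relative Auslander--Reiten equivalence $\tau\colon\umod_F\L\simeq\omod_F\L$ of \eqref{relative AR} finishes immediately.
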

\begin{proof} (a) We have an equivalence
$\tau\colon\umod_F\L\simeq\omod_F\L$ in \eqref{relative AR}.
Since $\X$ is a generator-cogenerator, $\I(F)=\P(G)$ holds by \eqref{P(FX) and I(FX)}.
Thus $\omod_F\L=\umod_G\L$ holds, and the assertion follows.

(b) We only prove commutativity of the left diagram. Let
\[0\to \Omega_F(A)\to P\to A\to 0\] 
be $F$-exact with $P$ in $\P(F)$. By Proposition~\ref{lem1.1}, we have
a $G$-exact sequence
\[0\to \tau\Omega_F(A)\to \tau(P)\oplus I\to\tau(A)\to 0\]
with $I$ in $\I(\L)$. Since $\tau(P)\oplus I$ is in $\P(G)$ by \eqref{P(FX) and I(FX)}, we have
an isomorphism $\tau\Omega_F\simeq\Omega_G\tau$ of functors.

(c) We have the following functorial isomorphisms
\begin{align}\notag
\Ext^n_F(C,A) &\simeq \Ext^1_F(\Omega_F^{n-1}(C),A) \\ \notag
              &\simeq \Ext^1_G(\tau \Omega_F^{n-1}(C),\tau(A))\\ \notag
	      &\simeq \Ext^1_G(\Omega_G^{n-1}\tau(C),\tau(A))\\ \notag
              & \simeq \Ext^n_G(\tau(C),\tau(A))\notag
\end{align}
by dimension shift, Proposition \ref{lem1.1}(b) and (b), where all the involved isomorphisms are functorial. The claim follows. 
\end{proof}

\subsection{Relative tilting theory} Tilting theory is an important
topic in representation theory of Artin algebras and elsewhere. It
also has a relative version, which we recall from \cite{ASII,ASIII}.  Here
we always assume that our additive sub-bifunctor $F$ has enough $F$-projectives and
enough $F$-injectives.  
For a subcategory $\C$ in $\mod\L$, let
\begin{eqnarray*}
\C^{\perp_F}&=&\{M\in\mod\L\mid \Ext^i_F(\C,M)=0 \text{\    for all $i>0$}\},\\
{}^{\perp_F}\C &=&\{M\in\mod\L\mid \Ext^i_F(M,\C)=0 \text{\    for all $i>0$}\}.
\end{eqnarray*}
When $F=\Ext^1_\L(-,-)$, we simply denote $\C^{\perp_F}$ and ${}^{\perp_F}\C$ by $\C^{\perp}$ and ${}^{\perp}\C$ respectively.

\begin{defin}
We call $T$ in $\mod\L$ an \emph{$F$-cotilting module} if 
\begin{enumerate}[(i)]
\item[(i)] $\id_F T<\infty$,
\item[(ii)] $\Ext^i_F(T,T)=0$ for $i>0$, 
\item[(iii)] for all $I$ in $\I(F)$ there exists an $F$-exact sequence 
\[0\to T_n\to T_{n-1}\to \cdots \to T_1\to T_0\to I\to 0\]
with $T_i$ in $\add T$. 
\end{enumerate}
\end{defin}
It is shown in \cite[Corollary 3.14]{ASII} that if there exists an
$F$-cotilting module $T$, then $\P(F)$ and hence $\I(F)$ are of finite
type.  In fact, they contain the same number of non-isomorphic
indecomposable objects as $\add T$. 

Next we collect the basic results on relative cotilting modules that
we need later. 
\begin{thm}[\protect{\cite{ASII}}]\label{thm:relativecotilting}
  Let $X$ be a generator in $\mod\L$ and $F=F_{X}$.
  Let $T$ be an $F$-cotilting $\L$-module and $\G=\End_\L(T)$.
  Then we have the following.
\begin{enumerate}[\rm(a)]
\item $\L\simeq \End_\G(T)$. 
\item Any $C$ in $^{\perp_F} T$ has an $F$-exact sequence
\[0\to C\to T_0\extto{f_0} T_1 \extto{f_1} T_2\extto{f_2} T_3 \to
\cdots\]
with $T_i$ in $\add T$ and $\Im f_i$ in $^{\perp_F}T$ for all $i\geq 0$.
\item $\Ext_F^i(C,A)\simeq
    \Ext_\G^i(\Hom_\L(A,T),\Hom_\L(C,T))$ for all modules $A$ and $C$
    in ${^{\perp_F}T}$ and $i\geq 0$. 
\item The module $U=\Hom_\L(X,T)$ is a
    cotilting $\G$-module with $\id_F T\leq \id_\G U\leq \id_F T + 2$
    \footnote{This inequality is a corrected version of \cite[Theorem 3.13(d)]{ASII} due to an error in the statement of \cite[Proposition 3.11]{ASII}, where the bound should be $\id_F T + 2$, instead of $\max\{\id_FT,2\}$.}.
\item $\Hom_\L(-,T)\colon \mod\L\to \mod\G$ and
    $\Hom_\G(-,T)\colon \mod\G\to \mod\L$ induce quasi-inverse dualities
    $\Hom_\L(-,T)\colon{^{\perp_F}T}\to{^\perp U}$ and
    $\Hom_\G(-,T)\colon{^\perp U}\to{^{\perp_F}T}$.
\end{enumerate}
\end{thm}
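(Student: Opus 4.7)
The plan is to follow the classical tilting theorem (in the Brenner--Butler / Miyashita style) and transcribe each step into the $F$-relative framework. I would begin with (b), since (a) and (c)--(e) all build on it. Given $C \in {}^{\perp_F} T$, the goal is an $F$-exact left $\add T$-coresolution, constructed inductively. The first step is a left $\add T$-approximation $f\colon C \to T_0$, which exists because $T$ is finitely generated; its $F$-monicity can be extracted by pulling back through an $F$-injective envelope of $C$ and applying (iii) to resolve that envelope by $\add T$, then splicing. One then verifies, via the long exact $\Ext_F$-sequence together with (ii), that $\Coker f \in {}^{\perp_F} T$, so the induction continues.

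Part (a) follows by applying (b) to $\L \in \P(F) \subseteq {}^{\perp_F} T$: this yields an $F$-coresolution $0\to\L\to T_0\to T_1\to\cdots$ which must terminate after at most $\id_F T$ steps. The functor $\Hom_\L(-,T)$ is exact on this sequence (because each syzygy lies in ${}^{\perp_F} T$), producing a finite projective resolution $0\to P_m\to\cdots\to P_0\to T\to 0$ of $T$ over $\G$. Applying $\Hom_\G(-,T)$ back, the identification $\Hom_\G(\Hom_\L(T_i,T),T)\simeq T_i$ (immediate for $T_i\in\add T$) recovers the original coresolution with $\L$ replaced by $\End_\G(T)$; a five-lemma comparison then gives $\L\simeq\End_\G(T)$. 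For (c), repeat this argument with $C$ in place of $\L$: the $\add T$-coresolution of $C$, after $\Hom_\L(-,T)$, is a $\G$-projective resolution of $\Hom_\L(C,T)$, and each $\Hom_\G(\Hom_\L(T_i,T),\Hom_\L(A,T))$ naturally identifies with $\Hom_\L(T_i,A)$ (via (a) applied term-wise to $T_i\in\add T$), so the cohomology computes $\Ext^i_F(C,A)$.

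Part (e) is then formal from (a) and (c): the natural maps $C\to\Hom_\G(\Hom_\L(C,T),T)$ and its dual for $M\in{}^\perp U$ can be read off the $\add T$-coresolutions term-wise, reducing to the isomorphism in (a). For (d), the vanishing $\Ext^i_\G(U,U)=0$ follows from (c) with $A=C=X$, since $X\in\P(F)$ forces $\Ext^i_F(X,X)=0$; the remaining cotilting axioms for $U$ and the bound on $\id_\G U$ come from transferring an $F$-injective coresolution of $T$ across $\Hom_\L(X,-)$ after replacing each $F$-injective by the $\add T$-resolution provided by (iii).

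The main obstacle I expect is proving the sharp bound $\id_\G U\le\id_F T + 2$ in (d), which the footnote flags as requiring a correction. The difficulty is bookkeeping: one must verify that combining an $F$-injective coresolution of $T$ of length $\id_F T$ with the $\add T$-resolutions of the successive $F$-injectives provided by (iii), and then applying $\Hom_\L(X,-)$, introduces exactly two --- and not more --- additional steps. This requires a careful splicing argument and an understanding of how $F$-injectives of the form $\tau X$ interact with the $\add T$-resolutions, which is precisely where the earlier bound $\max\{\id_F T,2\}$ fails to account for all contributions.
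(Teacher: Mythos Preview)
The paper does not prove this theorem; it simply cites the individual parts from \cite{ASII} (part (a) is Corollary~3.4 there, (b) is Theorem~3.2(a), (c) is Proposition~3.7, (d) is Theorem~3.13(d), and (e) is Corollary~3.6(a) together with Proposition~3.8(b)). Your outline is therefore not a reconstruction of anything in the present paper but rather a sketch of the underlying proofs in \cite{ASII}, and the overall architecture---build everything on the $\add T$-coresolutions of (b), which play the role of injective coresolutions on ${}^{\perp_F}T$, and transport structure across $\Hom_\L(-,T)$---is correct and is indeed how those proofs go.

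That said, your sketch of (c) has its variables crossed. Taking the $\add T$-coresolution of $C$ and applying $\Hom_\L(-,T)$ yields a projective $\G$-resolution of $\Hom_\L(C,T)$, but this is the \emph{second} argument of $\Ext^i_\G(\Hom_\L(A,T),\Hom_\L(C,T))$; a projective resolution of the second variable computes nothing. You must coresolve $A$ instead, obtaining a projective resolution $\cdots\to\Hom_\L(T'_1,T)\to\Hom_\L(T'_0,T)\to\Hom_\L(A,T)\to 0$. The termwise identification is then
\[
\Hom_\G\bigl(\Hom_\L(T'_i,T),\,\Hom_\L(C,T)\bigr)\ \simeq\ \Hom_\L(C,T'_i),
\]
not $\Hom_\L(T'_i,C)$: test the case $T'_i=T$, where the left side is $\Hom_\G(\G,\Hom_\L(C,T))=\Hom_\L(C,T)$. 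Finally, the assertion that the cohomology of $\Hom_\L(C,T'_\bullet)$ equals $\Ext^i_F(C,A)$ is not automatic, since the $T'_i$ are not $F$-injective; it follows by dimension-shifting along the short $F$-exact pieces $0\to A_i\to T'_i\to A_{i+1}\to 0$ of the coresolution, using that $\Ext^{>0}_F(C,T'_i)=0$ because $C\in{}^{\perp_F}T$. With these corrections your approach to (c) goes through; the remaining parts of your plan are sound.
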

\begin{proof}
(a) is \cite[Corollary 3.4]{ASII}, (b) is \cite[Theorem 3.2(a)]{ASII},
(c) is \cite[Proposition 3.7]{ASII}, (d) is \cite[Theorem 3.13(d)]{ASII},
and (e) is \cite[Corollary 3.6(a), Proposition 3.8(b)]{ASII}.
\end{proof}

Now let $F$ be an additive sub-bifunctor of $\Ext^1_\L(-,-)$, and $\X$
a generator of $\mod\L$.  Then
$\X$-$\resdim_F M$ is defined to be the infimum of $n$ such that there
exists an $F$-exact sequence
\[0\to X_n \to X_{n-1}\to\cdots\to X_1\to X_0\to M\to 0,\] 
where $X_i$ is in $\X$ for all $i\geq 0$. We denote by
$\widehat{\X}$ the full subcategory of $\mod\L$ consisting of all $M$
in $\mod\L$ with $\X$-$\resdim_F M<\infty$.  Let
$\X$-$\resdim_F(\mod\L):=\sup\{\X$-$\resdim_F M\mid M\in \mod\L\}$.

The following result is taken from \cite{ASIII} (with the exception of
(c)), which connects special direct summands of absolute cotilting
modules with relative cotilting modules.
Let $\G$ be an Artin algebra and $X$ in $\mod\G$. Recall that
a direct summand $Y$ of $X$ is said to be a \emph{dualizing summand of $X$}
if there exists an exact sequence
\[0\to X\extto{f} Y^0\to Y^1\] 
with $f$ a left $(\add Y)$-approximation and $Y^i$ in $\add Y$.  This
is shown to be equivalent to the natural homomorphism
\[X\to \Hom_\L(\Hom_\G(X,Y),Y)\] 
being an isomorphism, where $\L=\End_\G(Y)$ \cite[Proposition 2.1]{ASIII}.

\begin{thm}\label{thm:dualizingsummand}
  Let $\G$ be an Artin algebra and $U$ a cotilting $\G$-module.
  For a dualizing summand $T$ of $U$, let $\L=\End_\G(T)$, $X=\Hom_\G(U,T)$
  and $F=F_X$ an additive sub-bifunctor of $\Ext^1_\L(-,-)$.
  Then the following assertions hold.
\begin{enumerate}[\rm(a)]
\item $\G\simeq \End_\L(T)$. 
\item $T$ is an $F$-cotilting $\L$-module with $\id_FT\leq
  \max\{\id {_\G U}, 2\}$.
\item If $T$ is injective as a $\G$-module, then $\id_FT\leq \max\{\id {_\G U}-2,0\}$. 
\end{enumerate}
\end{thm}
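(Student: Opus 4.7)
The plan is to prove the three parts using the central identification $U\simeq\Hom_\Lambda(X,T)$ from the dualizing summand property, which extends by functoriality to $\Hom_\Lambda(\Hom_\Gamma(V,T),T)\simeq V$ for $V\in\add U$. Thus $\Hom_\Gamma(-,T)$ and $\Hom_\Lambda(-,T)$ give quasi-inverse dualities between suitable subcategories of $\mod\Gamma$ and $\mod\Lambda$ linking the pair $U,T$ on the $\Gamma$-side with $X,\Lambda$ on the $\Lambda$-side.

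For (a), since $U$ is cotilting, $\Gamma\in{}^\perp U$ admits an $\add U$-coresolution $0\to\Gamma\to U_0\to\cdots\to U_n\to 0$. Applying $\Hom_\Gamma(-,T)$ gives, by $\Ext^i_\Gamma(U_j,T)=0$, an exact resolution $0\to X_n\to\cdots\to X_0\to T\to 0$ in $\mod\Lambda$ with $X_i\in\add X$. Applying $\Hom_\Lambda(-,T)$ back yields a complex $0\to\End_\Lambda(T)\to U_0\to\cdots\to U_n\to 0$ and a natural map $\eta_\Gamma\colon\Gamma\to\End_\Lambda(T)$ fitting into a commutative diagram whose vertical arrows over $U_0,\ldots,U_n$ are isomorphisms. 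Exactness of the bottom row reduces to $\Ext^i_\Lambda(X,T)=0$, which comes from extending the dualizing sequence to an $\add T$-coresolution of $U$ in $\mod\Gamma$; kernel comparison at $U_0$ then identifies $\eta_\Gamma$ as an isomorphism.

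For (b), I verify the three $F$-cotilting conditions. Self-orthogonality $\Ext^i_F(T,T)=0$ for $i>0$ follows from $\Ext^i_\Lambda(X,T)=0$ (above) via Proposition~\ref{prop:relative=absolute}, combined with $\Ext^i_\Lambda(T,T)=0$ obtained from transferring the $\add X$-resolution of $T$ through $\Hom_\Lambda(-,T)$ to a sequence of $\add U$-terms in $\mod\Gamma$ and invoking $\Ext^i_\Gamma(U,U)=0$. For the bound on $\id_F T$, splice the dualizing sequence $0\to U\to T^0\to T^1$ with the minimal $\Gamma$-injective coresolution $0\to U\to I^0\to\cdots\to I^d\to 0$ of $U$, where $d=\id_\Gamma U$, and transfer to $\mod\Lambda$ via $\Hom_\Gamma(-,T)$: the dualizing part contributes two $F$-injective terms (landing in $\add\Lambda\subseteq\I(F)$) and each remaining $\Hom_\Gamma(I^j,T)$ lies in $\I(F)=\add\{\tau X,\I(\Lambda)\}$ by identifying it as a relative Auslander--Reiten translate of $\add X$ via Proposition~\ref{lem1.1} and Theorem~\ref{prop:exttauisom}. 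The total length is $\max\{d,2\}$. Condition (iii), the existence of an $\add T$-coresolution for each $I\in\I(F)$, is obtained analogously by transferring the cotilting $\add U$-coresolution of $D\Gamma$.

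For (c), when $T$ is $\Gamma$-injective the functor $\Hom_\Gamma(-,T)$ is exact on $\mod\Gamma$, so the dualizing sequence can be absorbed into the first two steps of the $\Gamma$-injective coresolution of $U$ (one may take $T^0, T^1$ as injective hulls within the coresolution of $U$ itself). This collapses the two-term boundary contribution into the tail, yielding the sharper bound $\id_F T\leq\max\{\id_\Gamma U-2,0\}$. The main obstacle throughout is the precise identification of the transferred $\Gamma$-injective terms as lying in $\I(F)=\add\{\tau X,\I(\Lambda)\}$, which hinges on the correspondence between absolute and relative Auslander--Reiten translations from Section~\ref{section:2}.
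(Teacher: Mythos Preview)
Parts (a) and (b) are quoted from \cite{ASIII} in the paper, so the substantive comparison is with (c), and here your sketch (and already the $\id_F T$ bound in (b)) has genuine gaps.

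In (b) you write that the dualizing part ``contributes two $F$-injective terms (landing in $\add\Lambda\subseteq\I(F)$)''. But $\I(F)=\I(F_X)=\add\{\tau X,\I(\Lambda)\}$ by \eqref{P(FX) and I(FX)}, and there is no reason for $\add\Lambda$ to be contained in this. Your ``splice'' of the dualizing sequence $0\to U\to T^0\to T^1$ with the injective coresolution $0\to U\to I^0\to\cdots\to I^d\to0$ is also undefined: both sequences begin at $U$, so there is nothing to concatenate, and applying $\Hom_\Gamma(-,T)$ to either yields a complex \emph{ending} in $X=\Hom_\Gamma(U,T)$, not an $F$-injective coresolution of the $\Lambda$-module $T$. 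Finally, Proposition~\ref{lem1.1} and Theorem~\ref{prop:exttauisom} transport $F$-exact sequences along $\tau$ inside $\mod\Lambda$; they do not identify $\Hom_\Gamma(I^j,T)$ as a relative AR-translate of anything in $\add X$, so the inclusion $\Hom_\Gamma(I^j,T)\in\I(F)$ is unjustified. Your argument for (c) inherits all of this and adds only the vague claim that the two boundary terms ``collapse''.

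The paper's proof of (c) runs along a different axis. Instead of building an $F$-injective coresolution of $T$, it bounds $({}^{\perp_F}T)$-$\resdim_F(\mod\Lambda)$, which forces the bound on $\id_F T$. The injectivity of ${}_\Gamma T$ is used to write every $C\in\mod\Lambda$ as $\Hom_\Gamma(B,T)$; a sequence of left $(\add U)$-approximations of $B$ then produces an $F$-projective resolution of $C$ with $\Omega^j_F(C)=\Hom_\Gamma(B^j,T)$. The decisive step is the introduction of the auxiliary algebra $\Sigma=\End_\Gamma(U)$: one shows $\Hom_\Gamma(B^t,U)\in{}^{\perp_\Sigma}U$ for $t=\max\{\id{_\Gamma U}-2,0\}$ via a two-step shift coming from a projective presentation of $B$, and then the dualities ${}^{\perp_\Sigma}U\xleftarrow{\Hom_\Gamma(-,U)}{}^{\perp_\Gamma}U\xrightarrow{\Hom_\Gamma(-,T)}{}^{\perp_F}T$ from Theorem~\ref{thm:relativecotilting}(e) transport this to $\Omega^t_F(C)\in{}^{\perp_F}T$. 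None of this structure is present in your proposal.
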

\begin{proof}
(a) is \cite[Proposition 2.4(b)]{ASIII}, and (b) is \cite[Proposition 2.7(c)]{ASIII}.

  (c) Let $r=\id {_\G U}$ and $t=\max\{ r- 2, 0\}$. We prove that
  $({^{\perp_F}T})$-$\resdim_F(\mod\L) \leq t$, as this implies that $\id_F T \leq t$.
  This is done by showing that $\Omega^t_F(C)$ is in ${^{\perp_F}}T$ for all $C$ in $\mod\L$.

Assume that $_\G T$ is injective.  Then by \cite[Lemma 2.4]{ASIV}
the module $_\L T$ is a cogenerator in $\mod\L$, and therefore 
\[C\simeq \Hom_\G(\Hom_\L(C,T),T)\] 
for all modules $C$ in $\mod\L$.  In particular, $C\simeq
\Hom_\G(B,T)$ for some $\G$-module $B$. Let
\[\mathbb{P}\colon B\extto{f^0} U^0\extto{f^1} U^1\extto{f^2} U^2\extto{f^3} \cdots,\]
be a sequence of minimal left $(\add U)$-approximations of $B$, and let
$B^0=B$ and $B^j=\Coker f^{j-1}$ for $j\ge1$.
Then an $F$-projective resolution of $C$ is given by
\[\Hom_\G(\mathbb{P},T)\colon\cdots\to \Hom_\G(U^2,T)\to \Hom_\G(U^1,T)\to\Hom_\G(U^0,T)\to C\to 0,\]
and therefore we have, for every $j\ge0$,
\[\Omega^j_F(C)=\Hom_\G(B^j,T).\]

Let $\Sigma=\End_\G(U)$.  Then $U$ is a cotilting $\Sigma$-module with
$\id {_\Sigma U} = \id {_\G U}$.  The above complex $\mathbb{P}$ gives rise
to a projective resolution of the $\Sigma$-module $\Hom_\G(B,U)$,
\[\cdots\to \Hom_\G(U^2,U)\to \Hom_\G(U^1,U)\to\Hom_\G(U^0,U)\to \Hom_\G(B,U)\to 0.\]
Given a projective presentation $F_1\to F_0\to B\to 0$ of the $\G$-module $B$,
it induces an exact sequence 
\[0\to \Hom_\G(B,U)\to \Hom_\G(F_0,U)\to \Hom_\G(F_1,U)\to B'\to 0\]
of $\Sigma$-modules with $\Hom_\G(F_0,U)$ and $\Hom_\G(F_1,U)$ in
$\add {_\Sigma U}$. Then $\Omega^{j+2}_\Sigma(B')=\Hom_\G(B^j,U)$
holds for every $j\ge0$.

Now we show $\Hom_\G(B^t,U)$ is in ${^\perp_\Sigma U}$.
If $r \leq 2$, then $\Hom_\G(B^t,U)=\Hom_\G(B,U)=\Omega^2_\Sigma(B')$
is in ${^\perp_\Sigma U}$ since $\id {_\G U}=r\le 2$.
If $r > 2$, then $\Hom_\G(B^t,U)=\Hom_\G(B^{r-2},U)=\Omega^r_\Sigma(B')$
is in ${^\perp_\Sigma U}$ since $\id {_\G U}=r$.

By (a), (b) and Theorem \ref{thm:relativecotilting}(e), we have dualities
\[{^\perp_\Sigma U}\xleftarrow{\Hom_\G(-,U)}{^\perp_\G U}
\xrightarrow{\Hom_\G(-,T)}{^{\perp_F} T}.\]
We take $B''$ in ${^\perp_\G U}$ such that $\Hom_\G(B'',U)\simeq \Hom_\G(B^t,U)$
as $\Sigma$-modules. Then
\[\Omega^t_F(C)=\Hom_\G(B^t,T)\simeq\Hom_\G(B'',T)\in{^{\perp_F} T}\]
as $\L$-modules. Thus the claim holds.
\end{proof}

Let $F$ be an additive sub-bifunctor of $\Ext^1_\L(-,-)$.  A full
subcategory $\X$ of $\mod\L$ is \emph{$F$-resolving} (respectively
\emph{$F$-coresolving}) if
\begin{enumerate}[\rm(i)]
\item $\X$ is $F$-extension closed,
\item $\P(F)$ (respectively $\I(F)$) is contained in $\X$,
\item if $0\to A\to B\to C\to 0$ is $F$-exact and $B$ and $C$
  are in $\X$ (respectively $A$ and $B$ are in $\X$), 
  then $A$ (respectively $C$) is in $\X$.
\end{enumerate}

We need the following preparation from Auslander--Buchweitz theory.

\begin{prop}[\protect{\cite[Theorems 2.4, 2.5, Proposition 2.2]{ASII}}]\label{Auslander-Buchweitz}
  Let $F$ be an additive sub-bifunctor of $\Ext^1_{\L}(-,-)$, and $\X$
  an $F$-resolving subcategory of $\mod\L$.  Assume that the exact
  category $(\X,F)$ given in \textup{Proposition \ref{from F to exact}} has
  enough $F$-injectives and $\widehat{\X}=\mod\L$.
  Then the following assertions hold.
\begin{enumerate}[\rm(a)]
\item $\X$ is a contravariantly finite subcategory of $\mod\L$ and
  $\Y:=\X^{\perp_F}$ is a covariantly finite subcategory of $\mod\L$. 
\item $\X$-$\resdim_{F}(\mod\L)=\id_{F}(\X^{\perp_F})$ holds.
\end{enumerate}
\end{prop}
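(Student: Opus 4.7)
The plan is to adapt the classical Auslander--Buchweitz approximation method to the $F$-relative setting. Let $\omega$ denote the subcategory of $F$-injective objects of the exact category $(\X,F)$. My first observation is that $\omega\subseteq \X\cap\Y$: given $W\in\omega$ and any $F$-exact sequence $0\to W\to E\to X\to 0$ with $X\in\X$, the middle term $E$ lies in $\X$ by $F$-extension closure of $\X$, so $F$-injectivity of $W$ in $(\X,F)$ forces the sequence to split, giving $\Ext_F^1(X,W)=0$. Dimension shifting along an $F$-projective resolution (available globally by the paper's standing assumption on $F$ having enough $F$-projectives) upgrades this to $\Ext_F^i(\X,W)=0$ for all $i\ge 1$, so $W\in\Y$.

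For part (a), I induct on $d:=\X\text{-}\resdim_F M$, which is finite by $\widehat{\X}=\mod\L$. The case $d=0$ is trivial. For $d\ge 1$, choose an $F$-exact sequence $0\to K\to X^0\to M\to 0$ with $X^0\in\X$ and $\X\text{-}\resdim_F K\le d-1$; the inductive hypothesis provides a left $\Y$-approximation $0\to K\to Y^K\to X^K\to 0$ of $K$. Taking the pushout of $K\to X^0$ along $K\to Y^K$ produces an $F$-exact diagram whose two induced short exact sequences read $0\to X^0\to Z\to X^K\to 0$ and $0\to Y^K\to Z\to M\to 0$; the first forces $Z\in\X$ by $F$-extension closure, and the second is the desired right $\X$-approximation of $M$ (setting $Y_M:=Y^K\in\Y$ and $X_M:=Z\in\X$). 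A parallel pullback argument, starting from the right $\X$-approximation of $K$ and run in the dual exact category, delivers the left $\Y$-approximation $0\to M\to Y^M\to X^M\to 0$. The long exact $\Ext_F$-sequence, combined with $\Ext_F^1(\X,Y_M)=0$ and $\Ext_F^1(X^M,\Y)=0$, turns these $F$-exact sequences into honest approximations in $\mod\L$, yielding the functorial finiteness claimed in (a).

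For part (b), one direction is immediate: given any $F$-exact $\X$-resolution of $M$ of length $\le n$ and any $Y'\in\Y$, dimension shifting using $\Ext_F^{\ge 1}(\X,\Y)=0$ yields $\Ext_F^{>n}(M,Y')=0$, so $\id_F\Y\le \X\text{-}\resdim_F(\mod\L)$. For the reverse, assume $\id_F\Y\le n$. Iterate the right $\X$-approximations from (a): with $Y^{(0)}:=M$, successively form $0\to Y^{(k+1)}\to X^{(k)}\to Y^{(k)}\to 0$ for $k\ge 0$, and note that each $Y^{(k)}$ with $k\ge 1$ lies in $\Y$. Splicing for $k=0,\dots,n-1$ produces an $F$-exact sequence
\[
0\to Y^{(n)}\to X^{(n-1)}\to\cdots\to X^{(0)}\to M\to 0,
\]
and dimension shifting gives $\Ext_F^1(Y^{(n)},Y')\simeq \Ext_F^{n+1}(M,Y')=0$ for every $Y'\in\Y$. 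A final application of (a) to $Y^{(n)}$ gives $0\to Y'\to X\to Y^{(n)}\to 0$ with $Y'\in\Y$, $X\in\X$; the vanishing just established forces it to split, exhibiting $Y^{(n)}$ as a direct summand of $X$, hence in $\X$ (the subcategory $\X$ being closed under summands is implicit in our additive conventions). This promotes the spliced sequence to a genuine $\X$-resolution of $M$ of length $\le n$.

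The step I expect to be most delicate is the bookkeeping in the inductive step of (a), specifically the construction of the left $\Y$-approximation of $M$: the right $\X$-approximation of $K$ has the form $0\to Y_K\to X_K\to K\to 0$ with an epimorphism onto $K$, and does not admit a naive ``dual pushout'' analogous to the one used to construct the right $\X$-approximation. One route is to run the inductions for left and right approximations simultaneously so that each inductive stage can feed the other; an alternative is to supplement the construction with an $\omega$-coembedding supplied by the hypothesis that $(\X,F)$ has enough $F$-injectives, which guarantees the existence of the $F$-injective coresolutions needed to turn pullback diagrams into sequences landing in $\X$ and $\Y$ on the appropriate sides. The equivalence of these two viewpoints is what is packaged into \cite[Theorems 2.4, 2.5, Proposition 2.2]{ASII}.
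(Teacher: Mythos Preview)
The paper does not prove this proposition; it is quoted verbatim from \cite{ASII} with no argument given. So there is no ``paper's proof'' to compare against beyond noting that the cited source carries out exactly the relative Auslander--Buchweitz approximation theory you are sketching.

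Your argument is correct in outline and follows the standard route. The one place where your write-up does not yet constitute a proof is the construction of the left $\Y$-approximation of $M$ in the inductive step of (a), as you yourself flag. The phrase ``parallel pullback argument, starting from the right $\X$-approximation of $K$'' does not correspond to an actual diagram: from $0\to Y_K\to X_K\to K\to 0$ and $0\to K\to X^0\to M\to 0$ there is no pullback or pushout that lands you in a sequence of the form $0\to M\to Y^M\to X^M\to 0$. The situation is genuinely asymmetric because $\Y$ is not assumed to have enough projectives in any sense.

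The fix is precisely the $\omega$-coembedding you mention at the end, and it should replace the vague ``parallel'' step rather than sit alongside it as an alternative. Having already built the right $\X$-approximation $0\to Y_M\to X_M\to M\to 0$, use enough $F$-injectives in $(\X,F)$ to choose $0\to X_M\to W\to X'\to 0$ with $W\in\omega$ and $X'\in\X$. The composite $Y_M\hookrightarrow X_M\hookrightarrow W$ has cokernel $Y^M$, and the $3\times 3$ lemma yields both $0\to Y_M\to W\to Y^M\to 0$ (so $Y^M\in\Y$, since $\Y=\X^{\perp_F}$ is $F$-coresolving) and $0\to M\to Y^M\to X'\to 0$, the desired left $\Y$-approximation. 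Note that this also handles the base case $d=0$, where the left $\Y$-approximation is not trivial and is exactly $0\to M\to W\to X'\to 0$. With this correction your proof of (a) is complete, and your argument for (b) is already fine as written (modulo the summand-closure of $\X$, which is harmless here).
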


\section{Elementary properties of $n$-precluster tilting subcategories}\label{section:3}

Recall that for $n\geq 1$ a subcategory $\C$ of $\mod\L$ is
called \emph{$n$-cluster tilting} if $\C$ is functorially finite and
\[\C = {^{\perp_{n-1}}}\C = \C^{\perp_{n-1}},\]
where $^{\perp_{n-1}}\C$ is the full subcategory of $\mod\L$ given by
the modules 
\[{}^{\perp_{n-1}}\C=\{ X\in\mod\L \mid \Ext^i_\L(X,\C)=0 \text{\ for\ } 0 < i < n\}.\]
The full subcategory $\C^{\perp_{n-1}}$ is defined dually. 
In particular, it follows immediately from the definition that $\C$ is
a generator-cogenerator for $\mod\L$ and $\Ext^i_\L(\C,\C)=0$ for $0<
i < n$. In \cite{I} the functors
\[\tau_n = \tau\Omega^{n-1}_\L\colon \umod\L\to \omod\L\ \mbox{ and }\ 
\tau_n^{-} = \tau^-\Omega^{-(n-1)}_\L\colon \omod\L\to \umod\L\]
are defined as the \emph{$n$-Auslander--Reiten translations.} 
By \cite[Theorem 2.3.1]{I3}, the pair $(\tau_n^-,\tau_n)$ forms an adjunction.
By \cite[Theorem 1.4.1]{I}, they induce equivalences
\begin{equation}\label{taun equivalences}
\tau_n\colon\underline{{}^{\perp_{n-1}}\L}\to\overline{D\L^{\perp_{n-1}}}\ \mbox{ and }\ 
\tau_n^-\colon\overline{D\L^{\perp_{n-1}}}\to\underline{{}^{\perp_{n-1}}\L}.
\end{equation}
In particular $\tau_n$ and $\tau_n^-$ give bijections between
indecomposable non-projective modules in ${}^{\perp_{n-1}}\L$ and
indecomposable non-injective modules in $D\L^{\perp_{n-1}}$, while
they do not preserve indecomposability for arbitrary modules.
Moreover, for any $n$-cluster tilting subcategory $\C$ of $\mod\L$,
they restrict to equivalences
\[\tau_n\colon\underline{\C}\to\overline{\C}\ \mbox{ and }\ 
\tau_n^-\colon\overline{\C}\to\underline{\C}.\]
The next result gives a higher analog of Auslander--Reiten duality.

\begin{lem}[\protect{\cite[Theorem 1.5]{I}}]\label{prop:tauextformula}
We have the following.
\begin{enumerate}[\rm(a)]
\item $\uHom_\L(C,A)\simeq D\Ext^{n}_\L(A,\tau_n(C))$ and
  $\Ext^i_\L(C,A)\simeq D\Ext^{n-i}_\L(A,\tau_n(C))$ for
  $0<i<n$, for all modules $C$ in $^{\perp_{n-1}}\L$ and all modules $A$
  in $\mod\L$.
\item $\oHom_\L(C,A)\simeq D\Ext^{n}_\L(\tau_n^-(A),C)$ and
  $\Ext^i_\L(C,A)\simeq D\Ext^{n-i}_\L(\tau_n^-(A),C)$ for
  $0<i<n$, for all modules $C$ in $\mod\L$ and all modules $A$ in
  $D\L^{\perp_{n-1}}$. 
\end{enumerate}
\end{lem}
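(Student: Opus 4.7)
The plan is to prove part (a); part (b) then follows by a dual argument (with syzygies replaced by cosyzygies, projective covers by injective envelopes, and $\tau$ by $\tau^-$), or equivalently by applying (a) to $\L^{\op}$ through the duality $D$. The strategy is to reduce the assertion to the classical Auslander--Reiten formula $\uHom_\L(X,Y)\simeq D\Ext^1_\L(Y,\tau X)$ (i.e.\ Proposition \ref{prop:AR-formula} specialised to $F=\Ext^1_\L(-,-)$) by means of dimension shift.

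The central ingredient I would establish first is the following key lemma: if $C\in{}^{\perp_{n-1}}\L$, then for every $\L$-module $M$ and every $1\le k\le n-1$ there is a natural isomorphism $\Ext^k_\L(C,M)\simeq\uHom_\L(\Omega^k C,M)$. For the base case $k=1$, take a projective cover $0\to\Omega C\to P\to C\to 0$ and apply $\Hom_\L(-,M)$; this identifies $\Ext^1_\L(C,M)$ with $\Hom_\L(\Omega C,M)$ modulo those maps that extend along $\Omega C\hookrightarrow P$. Using $\Ext^1_\L(C,\L)=0$, any map $\Omega C\to Q$ with $Q$ a projective lifts through the inclusion $\Omega C\hookrightarrow P$, so maps $\Omega C\to M$ factoring through an arbitrary projective coincide with those extending to $P$; this identifies the quotient with $\uHom_\L(\Omega C,M)$. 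The general $k$ case follows by induction, applying the base case to $\Omega^{k-1}C$ after a dimension shift on the first argument, which is legitimate because $\Ext^1_\L(\Omega^{k-1}C,\L)\simeq\Ext^k_\L(C,\L)=0$ whenever $k<n$.

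With the lemma at hand, part (a) is obtained by chaining four natural isomorphisms. Fix $0\le i<n$ and use $\tau_n C=\tau\Omega^{n-1}C$. First, dimension shift on the first argument gives $\Ext^{n-i}_\L(A,\tau\Omega^{n-1}C)\simeq\Ext^1_\L(\Omega^{n-i-1}A,\tau\Omega^{n-1}C)$. Second, the classical Auslander--Reiten formula gives $D\Ext^1_\L(\Omega^{n-i-1}A,\tau\Omega^{n-1}C)\simeq\uHom_\L(\Omega^{n-1}C,\Omega^{n-i-1}A)$. Third, the key lemma with $k=n-1$ gives $\uHom_\L(\Omega^{n-1}C,\Omega^{n-i-1}A)\simeq\Ext^{n-1}_\L(C,\Omega^{n-i-1}A)$. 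Finally, iterated dimension shift on the second argument, legitimate because $\Ext^j_\L(C,\L)=0$ for $1\le j\le n-1$, collapses this expression to $\Ext^i_\L(C,A)$ when $0<i<n$, and, via one more application of the base-case argument $\Ext^1_\L(C,\Omega A)\simeq\uHom_\L(C,A)$, to $\uHom_\L(C,A)$ when $i=0$. Naturality in $A$ and $C$ is automatic, since every constituent isomorphism is natural.

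The main obstacle is the key lemma, and specifically the step identifying ``factors through an arbitrary projective'' with ``extends along $\Omega C\hookrightarrow P$''; this is precisely where the vanishing of $\Ext^1_\L(C,\L)$ is used. Without this input one only has a surjection $\uHom_\L(\Omega C,M)\twoheadrightarrow\Ext^1_\L(C,M)$ whose kernel may be nonzero, which would propagate into the induction and break the chain above. The remaining steps are routine applications of long exact sequences.
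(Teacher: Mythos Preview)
Your argument is correct and genuinely different from the paper's. The paper works in the relative-homological framework: it takes $F=F^{\L}$, observes that the projective resolution of $C$ is $F$-exact (since projectives lie in $\I(F)$), performs dimension shifts and the relative Auslander--Reiten formula inside $\Ext^*_F$, and then invokes Theorem~\ref{prop:exttauisom} to transport $\Ext^{n-i}_F(\tau^-(A),\Omega_\L^{n-1}(C))$ to $\Ext^{n-i}_\L(A,\tau_n(C))$. In contrast, you stay entirely in the absolute setting: your key lemma $\Ext^k_\L(C,M)\simeq\uHom_\L(\Omega^kC,M)$ for $C\in{}^{\perp_{n-1}}\L$ plays the role that ``projectives are $F$-injective'' plays in the paper, and the classical Auslander--Reiten formula replaces both the relative one and Theorem~\ref{prop:exttauisom}. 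Your route is more elementary and self-contained---it does not require the machinery of Section~\ref{section:2}---and it also handles the $\uHom$ isomorphism explicitly (the paper only treats the $\Ext^i$ isomorphisms and refers to \cite{I} for the rest). The paper's route, on the other hand, illustrates how the relative framework organises such computations, which is thematically central to the paper. Both are valid; yours would stand independently of the relative theory, while the paper's showcases that theory in action.
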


\begin{proof}
We give a proof of the second isomorphisms in our language of relative homological algebra.

(a) Let $F=F^{\L}$ be an additive sub-bifunctor of
  $\Ext^1_\L(-,-)$.  Let $C$ be in $^{\perp_{n-1}}\L$, and let
\[\eta\colon 0\to \Omega_\L^{n-1}(C)\to P_{n-2}\to \cdots\to P_0\to C\to 0\]
be a minimal projective resolution of $C$. Since $C$ is in
$^{\perp_{n-1}}\L$, we have that the exact sequence $\eta$ is
$F$-exact. Using this we have for $0<i<n$ and an arbitrary $\L$-module $A$ that
\begin{align}
D\Ext^i_F(C,A) & \simeq D\Ext^1_F(\Omega_\L^{i-1}(C),A)\notag\\
 & \simeq D\oHom_{F}(\Omega^i_\L(C),A)\notag\\
 & \simeq \Ext^1_F(\tau^-(A),\Omega_\L^i(C))\notag\\
 & \simeq \Ext^{n-i}_F(\tau^-(A),\Omega_\L^{n-1}(C))\notag\\
 & \simeq \Ext^{n-i}_{\L}(A,\tau_n(C)),\notag
\end{align}
where the second isomorphism uses that $P_i$ is in $\I(F)$, the third
is the Auslander--Reiten formula, the first and the fourth use dimension shift and the
last one is given by Theorem \ref{prop:exttauisom} and $G = F_\L=\Ext^1_\L(-,-)$.

Since $C$ is in $^{\perp_{n-1}}\L$, we have that $\Ext^i_\L(C,A)\simeq
\Ext^i_F(C,A)$ for $0<i<n$ and for all modules $A$ in $\mod\L$ by
Proposition \ref{prop:relative=absolute}(a). Thus the assertion follows.

(b) Similar proof as in (a). 
\end{proof}

We introduce the notion of an $n$-precluster tilting subcategory
by relaxing the conditions for an $n$-cluster tilting subcategory.

\begin{defin}\label{defin:n-precluster}
A subcategory $\C$ of $\mod\L$ is called an \emph{$n$-precluster tilting subcategory} if it
satisfies the following conditions.
\begin{enumerate}[(i)]
\item $\C$ is a generator-cogenerator for $\mod\L$,
\item $\tau_n(\C) \subseteq \C$ and $\tau_n^-(\C)\subseteq \C$, 
\item $\Ext^i_\L(\C,\C) = 0$ for $0 < i < n$,  
\item $\C$ is a functorially finite subcategory of $\mod\L$.  
\end{enumerate}
If moreover $\C$ admits an additive generator $M$,
we say that $M$ is an \emph{$n$-precluster tilting module}.
\end{defin}

Using $\tau_n$ and $\tau_n^-$, we define the subcategories
\[\P_n=\add\{\tau_n^{-i}(\L)\}_{i=0}^\infty\ \mbox{ and }\ \I_n=\add\{\tau_n^{i}(D(\L_\L))\}_{i=0}^\infty.\]
For any $n$-precluster tilting subcategory $\C$ of $\mod\L$, we have
\[\P_n\vee\I_n\subseteq\C\ \mbox{ and }\ \C\subseteq D\L^{\perp_{n-1}}\cap{}^{\perp_{n-1}}\L\]
by (i), (ii) and (i), (iii) respectively.

Recall from \cite{ASIV} that an Artin algebra $\L$ is called
\emph{$\tau$-selfinjective} if $\P_1$ is of finite type, which is
shown to be equivalent to that $\P_1$ is equal to $\I_1$.
We show next that this is equivalent to the existence of a $1$-precluster tilting
$\L$-module.  

\begin{example}
  An Artin algebra $\L$ is $\tau$-selfinjective if and only if $\L$
  has a $1$-precluster tilting module.
\end{example} 

\begin{proof}
  If $\L$ is $\tau$-selfinjective, then clearly the additive generator of
  $\P_1$ is a $1$-precluster tilting module since $D\L\in\I_1=\P_1$.  If
  $\L$ has a finite $1$-precluster subcategory $\C$, then it is clear
  from the definition that $\P_1$ is contained in $\C$.  Hence
  $\P_1$ is of finite type and $\L$ is $\tau$-selfinjective.
\end{proof}

This observation leads us to the following definition.

\begin{defin}\label{taun selfinjective}
  An Artin algebra $\L$ is called \emph{$\tau_n$-selfinjective} if
  $\L$ admits an $n$-precluster tilting module.
\end{defin}

This is a common generalization of selfinjective algebras and
$n$-representation-finite algebras.  We continue by asking and giving
one answer to the natural question: When is an Artin algebra
$\tau_n$-selfinjective?

\begin{prop}\label{prop:existenceofnprecluster}
Let $\L$ be an Artin algebra and $n\geq 1$.  Then the following conditions are equivalent. 
\begin{enumerate}[\rm(i)]
\item $\L$ is $\tau_n$-selfinjective.
\item $\P_n\vee \I_n$ is of finite type and $\Ext^i_\L(\P_n \vee \I_n,\P_n \vee \I_n)=0$ for $0<i<n$. 
\item $\I_n$ is of finite type, $\I_n\subset{}^{\perp_{n-1}}\L$ and $\Ext^i_\L(\I_n,\I_n)=0$ for $0<i<n$. 
\item $\L\in\I_n$ and $\Ext^i_\L(\I_n,\I_n)=0$ for $0<i<n$. 
\item $\P_n$ is of finite type, $\P_n\subset D\L^{\perp_{n-1}}$ and $\Ext^i_\L(\P_n,\P_n)=0$ for $0<i<n$. 
\item $D\L\in\P_n$ and $\Ext^i_\L(\P_n,\P_n)=0$ for $0<i<n$. 
\end{enumerate}
Moreover, if these conditions are satisfied, then every additive generator of $\P_n\vee\I_n$ is
an $n$-precluster tilting $\L$-module.
\end{prop}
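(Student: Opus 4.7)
I plan to prove the equivalences in the cycle $\mathrm{(i)}\Leftrightarrow\mathrm{(ii)}\Rightarrow\mathrm{(iii)}\Rightarrow\mathrm{(iv)}\Rightarrow\mathrm{(i)}$, with the dual cycle $\mathrm{(ii)}\Rightarrow\mathrm{(v)}\Rightarrow\mathrm{(vi)}\Rightarrow\mathrm{(i)}$ handled symmetrically; the ``moreover'' clause is then read off from $\mathrm{(ii)}\Rightarrow\mathrm{(i)}$. For $\mathrm{(i)}\Rightarrow\mathrm{(ii)}$, $\add M$ contains $\L$ and $D\L$ and is closed under $\tau_n$ and $\tau_n^-$, so by iteration $\P_n\vee\I_n\subseteq\add M$; finite type and selforthogonality are inherited. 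For the converse, the additive generator $M$ of $\P_n\vee\I_n$ is a generator-cogenerator (since $\L\in\P_n$ and $D\L\in\I_n$), functorially finite, selforthogonal by hypothesis, and closed under $\tau_n$ and $\tau_n^-$ via~\eqref{taun equivalences}.

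The implications $\mathrm{(ii)}\Rightarrow\mathrm{(iii)},\mathrm{(v)}$ are trivial restrictions. For $\mathrm{(iii)}\Rightarrow\mathrm{(iv)}$, I would show that $\tau_n$ restricts to an equivalence $\underline{\I_n}\simeq\overline{\I_n}$. Fully faithfulness is inherited from~\eqref{taun equivalences}; essential surjectivity holds because any non-injective $Y\in\I_n$ is a summand of some $\tau_n^j(D\L)$ with $j\ge 1$, so $\tau_n^-(Y)$ is, modulo projectives, a summand of $\tau_n^{j-1}(D\L)\in\I_n$. Finite type then forces the number of indecomposable projectives in $\I_n$ to equal the number of indecomposable injectives in $\I_n$; since $D\L\in\I_n$ supplies every indecomposable injective of $\L$, this common cardinality equals the number of simples, so every indecomposable projective of $\L$ must lie in $\I_n$, giving $\L\in\I_n$. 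The implication $\mathrm{(v)}\Rightarrow\mathrm{(vi)}$ is dual.

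The crucial step is $\mathrm{(iv)}\Rightarrow\mathrm{(i)}$. From selforthogonality together with $\L\in\I_n$ one immediately has $\I_n\subseteq{}^{\perp_{n-1}}\L$. For each indecomposable projective $P_i$, pick the least $k_i$ with $P_i$ a summand of $\tau_n^{k_i}(D\L)$; iterating the equivalence $\tau_n^-\tau_n\simeq\id$ on $\underline{{}^{\perp_{n-1}}\L}$ shows that $\tau_n^{-j}(P_i)$ is, modulo projectives, a summand of $\tau_n^{k_i-j}(D\L)\in\I_n$ for $0\le j\le k_i$; in particular $\tau_n^{-k_i}(P_i)\simeq I_{\sigma(i)}$ is an indecomposable injective in $\umod\L$. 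Since the $(n-1)$-th cosyzygy of an injective vanishes, $\tau_n^-(I_{\sigma(i)})=0$ in $\umod\L$, so the orbit terminates and $\P_n$ is of finite type with $\P_n\subseteq\I_n$. A case analysis---distinguishing projective-injective summands from purely projective ones---shows $\sigma$ is injective, hence bijective by counting, so $D\L\in\P_n$; then $\P_n$ is closed under $\tau_n$, giving $\I_n\subseteq\P_n$ and therefore $\P_n=\I_n$ finite. Its additive generator is the desired $n$-precluster tilting module. The main technical obstacle is this final chain: carefully tracking how the non-strict equivalences $\tau_n\tau_n^-\simeq\id$ and $\tau_n^-\tau_n\simeq\id$ introduce projective and injective summands, and leveraging this to bound the $\tau_n^-$-orbit of~$\L$.
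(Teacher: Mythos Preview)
Your argument is correct and follows the same overall scheme as the paper: the cycle $\mathrm{(i)}\Leftrightarrow\mathrm{(ii)}\Rightarrow\mathrm{(iii)}\Rightarrow\mathrm{(iv)}\Rightarrow\mathrm{(ii)}$ (with $\mathrm{(v)}$, $\mathrm{(vi)}$ handled dually), and the same use of the equivalences~\eqref{taun equivalences} together with a counting of indecomposable projectives versus injectives. The only noteworthy difference is in the step $\mathrm{(iv)}\Rightarrow\mathrm{(i)}$/$\mathrm{(ii)}$. The paper, having written each indecomposable projective $P$ as $\tau_n^{l}(I)$, simply records $\tau_n^{-i}(P)\simeq\tau_n^{l-i}(I)$ and $\tau_n^{-(l+1)}(P)=0$ to conclude $\P_n\vee\I_n=\I_n$ and declares $\mathrm{(ii)}$; it leaves implicit why $\I_n$ itself is of finite type. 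Your version makes this explicit by checking that the assignment $P_i\mapsto I_{\sigma(i)}$ is injective (via the case analysis separating projective-injective from purely projective modules), hence bijective, so that $D\L\in\P_n$ and $\P_n=\I_n$ is finite. This extra bookkeeping is exactly what is needed to close the gap, and it is entirely in the spirit of the counting argument the paper already uses in $\mathrm{(iii)}\Rightarrow\mathrm{(iv)}$.
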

\begin{proof} (i) is equivalent to (ii): Assume that there exists an
  $n$-precluster tilting module $M$ in $\mod\L$. Since
  $\P_n\vee\I_n\subset\add M$, it is immediate that $\P_n \vee \I_n$
  is of finite type and satisfies $\Ext^i_\L(\P_n \vee \I_n,\P_n \vee
  \I_n)=0$ for $0<i<n$.  Conversely, if $\P_n \vee \I_n$ satisfies
  (ii), then an additive generator of $\P_n\vee\I_n$ is an
  $n$-precluster tilting $\L$-module.

(ii) implies (iii): This is immediate. 

(iii) implies (iv): For each indecomposable non-projective module
$X$ in $^{\perp_{n-1}}\L$, we know that $\tau_n(X)$ is indecomposable again
by the equivalence \eqref{taun equivalences}.
Since $\I_n$ is of finite type, then $\tau_n^l(I)\neq 0$ is projective for some
$l\geq 0$ for all indecomposable injective modules $I$. Since the number of
indecomposable projectives and of indecomposable injectives coincide, all
indecomposable projective modules must occur in this
way, hence $\L$ is in $\I_n$ and (iv) is satisfied. 

(iv) implies (ii): For each indecomposable projective modules
$P$ there exists an indecomposable injective module $I$ such
that $P\simeq \tau_n^l(I)$. Since $\tau_n^{-i}(P)\simeq\tau_n^{l-i}(I)$
for $0\le i\le l$ and $\tau_n^{-(l+1)}(P)=0$ hold, we have
$\P_n\vee\I_n=\I_n$. Thus (ii) is satisfied.

The equivalences of (ii), (v) and (vi) are shown dually.
\end{proof}

Using the above we have the following consequence for $n=2$
thanks to a general property of $\I_2$.

\begin{prop}[\protect{cf.\ \cite[Proposition 1.7]{I4}}]
Let $\L$ be an Artin algebra. Then $\L$ is $\tau_2$-selfinjective if and only if $\L\in\I_2$. 
\end{prop}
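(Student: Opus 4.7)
The plan is to apply Proposition~\ref{prop:existenceofnprecluster} with $n=2$, whose condition (iv) specializes to
\[\L\in\I_2\quad\text{and}\quad\Ext^1_\L(\I_2,\I_2)=0.\]
The equivalence (i)$\Leftrightarrow$(iv) in that proposition identifies this conjunction with $\tau_2$-selfinjectivity of $\L$, so the ``only if'' direction of the present statement is immediate: $\tau_2$-selfinjectivity forces $\L\in\I_2$ by (iv).

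For the ``if'' direction, assume $\L\in\I_2$. It then suffices to verify the Ext-vanishing $\Ext^1_\L(\I_2,\I_2)=0$, after which condition (iv) is satisfied and Proposition~\ref{prop:existenceofnprecluster} yields $\tau_2$-selfinjectivity. The key input is the general fact referenced in \cite[Proposition 1.7]{I4}: for any Artin algebra $\L$ the subcategory $\I_2$ is selforthogonal in degree one, a property special to $n=2$ and rooted in the identity $\tau_2=\tau\Omega_\L$. Concretely, a typical indecomposable of $\I_2$ is of the form $\tau_2^jJ$ with $j\geq 1$ and $J$ injective; rewriting it as $\tau(\Omega_\L\tau_2^{j-1}J)$ and applying the Auslander--Reiten formula identifies $\Ext^1_\L(\tau_2^iI,\tau_2^jJ)$ with the dual of the stable Hom-space $\uHom_\L(\Omega_\L\tau_2^{j-1}J,\tau_2^iI)$, whose vanishing is obtained by a dimension-shift argument and Theorem~\ref{prop:exttauisom}, together with the fact that $\Omega_\L\tau_2^{j-1}J$ is a syzygy.

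The main obstacle in a self-contained treatment is precisely this Ext-vanishing: it is not transparent from the definitions of $\I_2$ and $\tau_2$, and it genuinely uses the $n=2$ identity $\tau_2=\tau\Omega_\L$, failing for $\I_n$ with $n\geq 3$. Here we import it from \cite{I4} as a black box. With it in hand, the hypothesis $\L\in\I_2$ upgrades to the full condition (iv) of Proposition~\ref{prop:existenceofnprecluster}, which via (i)$\Leftrightarrow$(iv) is exactly $\tau_2$-selfinjectivity, completing the proof.
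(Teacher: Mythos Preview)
Your proposal is correct and follows essentially the same route as the paper: reduce to condition (iv) of Proposition~\ref{prop:existenceofnprecluster} and import the $\Ext^1_\L(\I_2,\I_2)=0$ vanishing from \cite{I4} as a black box. One small discrepancy: the paper cites \cite[Proposition~2.5]{I4} for the Ext-vanishing, whereas you point to \cite[Proposition~1.7]{I4} (the ``cf.'' reference in the statement is to an analogous result, not to the source of the vanishing), so you should double-check the precise location in \cite{I4}.
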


\begin{proof}
  By \cite[Proposition 2.5]{I4} the subcategory $\I_2$ satisfies $\Ext_\L^1(\I_2,\I_2)=0$.
  The claim then follows
  immediately from Proposition \ref{prop:existenceofnprecluster}(iv)$\Rightarrow$(i). 
\end{proof}

The next results give analog of properties of higher Auslander--Reiten translation
for $n$-cluster tilting subcategories.

\begin{prop}\label{lem:preclustercategory}
Let $\L$ be an Artin algebra and $\C$ an $n$-precluster tilting
subcategory with $n\ge1$. Then we have the following.
\begin{enumerate}[\rm(a)]
\item We have mutually quasi-inverse equivalences
$\tau_n\colon\underline{\C}\to\overline{\C}$
and $\tau_n^-\colon\overline{\C}\to\underline{\C}$.
\item $\tau_n$ and $\tau_n^-$ give bijections between indecomposable
  non-projective modules in $\C$ and indecomposable non-injective
  modules in $\C$.
\item We have $\add\{\tau^-_{n}(\C), \L\} = \C=\add\{\tau_{n}(\C), D\L\} $. 
\item There exists a full subcategory $\D$ of $\mod\L$ such that
  $\C=\add\{\P_n\vee\I_n,\D\}$, $(\P_n\vee\I_n)\cap\D=\{0\}$ and
  $\tau_n(\D)=\D=\tau_n^-(\D)$.
\end{enumerate}
\end{prop}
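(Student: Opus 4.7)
\medskip\noindent\textbf{Proof proposal.}
The plan is to deduce parts (a) and (b) directly from the higher Auslander--Reiten equivalences in \eqref{taun equivalences}, then derive (c) as a consequence of the bijection on indecomposables, and finally build $\D$ in (d) by a two-sided orbit construction.

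For part (a), I would observe that conditions (i) and (iii) of Definition \ref{defin:n-precluster} give $\C\subseteq{}^{\perp_{n-1}}\L\cap D\L^{\perp_{n-1}}$, while condition (ii) gives $\tau_n(\C)\subseteq\C$ and $\tau_n^-(\C)\subseteq\C$. Hence the equivalences $\tau_n\colon\underline{{}^{\perp_{n-1}}\L}\to\overline{D\L^{\perp_{n-1}}}$ and $\tau_n^-$ of \eqref{taun equivalences} restrict to mutually quasi-inverse equivalences $\underline{\C}\leftrightarrow\overline{\C}$. Part (b) is then immediate: any additive equivalence preserves and reflects indecomposability, and the indecomposable objects of $\underline{\C}$ (respectively $\overline{\C}$) correspond precisely to the indecomposable non-projective (respectively non-injective) modules in $\C$.

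For part (c), I would take an indecomposable $X\in\C$; if $X$ is projective then $X\in\add\L$, and otherwise (b) yields a non-injective indecomposable $Y\in\C$ with $X\cong\tau_n^-(Y)\in\tau_n^-(\C)$. This shows $\C\subseteq\add\{\tau_n^-(\C),\L\}$. The reverse inclusion follows from $\tau_n^-(\C)\subseteq\C$ and $\L\in\C$ (since $\C$ is a generator). The second equality is proved dually using injectives and $\tau_n$.

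For part (d), I would define $\D$ as the additive closure in $\mod\L$ of the set of indecomposables $X\in\C$ whose two-sided orbit $\{\tau_n^k(X)\mid k\in\mathbb{Z}\}$ (nonzero entries only) contains no projective and no injective; by (b), this orbit lies entirely in $\C$ and consists of non-projective non-injective indecomposables. By definition every indecomposable $X\in\C$ either lies in $\D$ or its orbit hits $\P(\L)\cup\I(\L)$, in which case $X$ lies in $\P_n$ or $\I_n$; this yields $\C=\add\{\P_n\vee\I_n,\D\}$, and the disjointness $(\P_n\vee\I_n)\cap\D=\{0\}$ is built into the construction. Finally, since the orbit of $\tau_n(X)$ is just a shift of the orbit of $X$, the criterion defining $\D$ is preserved under $\tau_n$ and $\tau_n^-$; combined with the bijection in (b), this gives $\tau_n(\D)=\D=\tau_n^-(\D)$.

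The only genuine care point will be in part (d): I need to verify that the two-sided orbit is well-defined (an iterate becomes zero exactly when it passes through a projective going forward under $\tau_n$ or through an injective going backward under $\tau_n^-$), and that ``orbit meets $\P(\L)$'' agrees with membership in $\P_n$ as defined via $\P_n=\add\{\tau_n^{-i}(\L)\}_{i\ge0}$ (and analogously for $\I_n$); both translations follow directly from the bijection in (b) applied iteratively.
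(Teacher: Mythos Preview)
Your proposal is correct and follows essentially the same approach as the paper: part (a) is deduced from the equivalences \eqref{taun equivalences} using $\C\subseteq{}^{\perp_{n-1}}\L\cap D\L^{\perp_{n-1}}$ together with $\tau_n(\C)\subseteq\C$ and $\tau_n^-(\C)\subseteq\C$, and parts (b)--(d) are then consequences of (a). The paper's own proof simply states that (b), (c), (d) ``follow immediately from (a)'', whereas you have spelled out the orbit argument for (d) in detail; your extra care there is justified and accurate.
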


\begin{proof}
  (a) We have $\C\subseteq {D\L^{\perp_{n-1}}}\cap{}^{\perp_{n-1}}\L$.
  Since $\tau_n(\C)\subseteq\C$ and $\tau_n^-(\C)\subseteq\C$ hold,
  the claim follows from the equivalences \eqref{taun equivalences}.

  (b)(c)(d) These follow immediately from (a).
\end{proof}

An $n$-cluster tilting subcategory $\C$ satisfies by definition the
equalities $\C = {^{\perp_{n-1}}}\C = \C^{\perp_{n-1}}$. The same is
not true for an $n$-precluster tilting subcategory $\C$ of $\mod\L$
(e.g.\ $\L$ is non-semisimple selfinjective and $\C=\add\L$).
But the equality ${^{\perp_{n-1}}}\C = \C^{\perp_{n-1}}$ is shown
still to be true.

\begin{prop}\label{left and right orthogonal}
  Let $\C$ be a subcategory of $\mod\L$ satisfying the conditions
  \textup{(i)}, \textup{(iii)} and \textup{(iv)} in \textup{Definition \ref{defin:n-precluster}}.
  \begin{enumerate}[\rm(a)]
  \item We have $\Omega^-_{F^{\C}}({}^{\perp_{n-1}}\C)\subseteq
  {}^{\perp_{n-1}}\C$ and $\Omega_{F_{\C}}(\C^{\perp_{n-1}})\subseteq\C^{\perp_{n-1}}$.
  \item Assume $n>1$. Then $\C$ is an $n$-precluster tilting subcategory of $\mod\L$ if and only if
  $\C^{\perp_{n-1}} = {^{\perp_{n-1}}\C}$.
  \end{enumerate}
\end{prop}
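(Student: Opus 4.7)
Both parts split into dual pairs; throughout I handle the claims about ${}^{\perp_{n-1}}\C$ and $\tau_n(\C)$, invoking duality for the other halves.

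For (a), form an $F^\C$-injective envelope $0\to M\to I\to N\to 0$ of a given $M\in{}^{\perp_{n-1}}\C$; by \eqref{P(FX) and I(FX)} applied to $F^\C=F_{\tau^-\C}$ together with the cogenerator condition (i), we have $I\in\I(F^\C)=\add\C$. Applying $\Hom_\L(-,C')$ for $C'\in\C$: the $F^\C$-exactness supplies surjectivity of $\Hom_\L(I,C')\to\Hom_\L(M,C')$, and combined with $\Ext^1_\L(I,C')=0$ from (iii) this forces $\Ext^1_\L(N,C')=0$. For $1<i<n$, the surrounding pieces $\Ext^{i-1}_\L(M,C')$ and $\Ext^i_\L(I,C')$ of the long exact sequence both vanish (by the hypothesis on $M$ and by (iii)), so $\Ext^i_\L(N,C')=0$ as well. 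Hence $N=\Omega^-_{F^\C}(M)\in{}^{\perp_{n-1}}\C$. The statement for $\C^{\perp_{n-1}}$ follows from the symmetric argument using an $F_\C$-projective cover and $\P(F_\C)=\add\C$.

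For the $(\Leftarrow)$ of (b), Proposition \ref{lem:preclustercategory}(c) gives $\C=\add\{\tau_n(\C),D\L\}$, so for $X\in\C^{\perp_{n-1}}$ it suffices to verify $\Ext^i_\L(X,\tau_n(C''))=0$ for every $C''\in\C$ and $0<i<n$ (vanishing against $D\L$ is automatic). Since $C''\in\C\subseteq{}^{\perp_{n-1}}\L$, Lemma \ref{prop:tauextformula}(a) provides $\Ext^i_\L(C'',X)\simeq D\Ext^{n-i}_\L(X,\tau_n(C''))$, and the left side vanishes by $X\in\C^{\perp_{n-1}}$. Hence $X\in{}^{\perp_{n-1}}\C$. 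The reverse inclusion is the symmetric argument using $\C=\add\{\tau_n^-(\C),\L\}$ and Lemma \ref{prop:tauextformula}(b).

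For the $(\Rightarrow)$ of (b), set $\Z:={}^{\perp_{n-1}}\C=\C^{\perp_{n-1}}$; my plan proceeds in two stages. \emph{Stage one:} show $\tau_n(\C)\subseteq\Z^{\perp_{n-1}}$ via the same application of Lemma \ref{prop:tauextformula}(a) as in $(\Leftarrow)$, but now with $X$ ranging over all of $\Z$ rather than over a single $\tau_n(C'')$. \emph{Stage two:} show $\Z^{\perp_{n-1}}\subseteq\C$. Given $Y\in\Z^{\perp_{n-1}}\subseteq\C^{\perp_{n-1}}=\Z$, take a minimal left $\C$-approximation $Y\to C^0$, which is injective since $D\L\in\C$, producing an $F^\C$-exact sequence $0\to Y\to C^0\to X\to 0$ with $X\in{}^{\perp_{n-1}}\C=\Z$ by part (a). Here the hypothesis $n>1$ is decisive: it puts $i=1$ in the vanishing range, so $\Ext^1_\L(X,Y)=0$ follows from $X\in\Z$ together with $Y\in\Z^{\perp_{n-1}}$. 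The sequence therefore splits and $Y$ is a summand of $C^0\in\C$, so $Y\in\C$. Combining the stages yields $\tau_n(\C)\subseteq\Z^{\perp_{n-1}}\subseteq\C$, and the completely analogous argument with Lemma \ref{prop:tauextformula}(b) and the intermediate category ${}^{\perp_{n-1}}\Z$ gives $\tau_n^-(\C)\subseteq\C$. The principal obstacle is inserting the auxiliary category $\Z^{\perp_{n-1}}$ and observing that the splitting step in Stage two is exactly where the assumption $n>1$ is indispensable.
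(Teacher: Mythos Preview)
Your argument is mathematically correct and follows essentially the same route as the paper: part (a) via the long exact sequence after a left $\C$-approximation, and part (b) via Lemma~\ref{prop:tauextformula} together with the auxiliary category $\Z^{\perp_{n-1}}=({}^{\perp_{n-1}}\C)^{\perp_{n-1}}$ and the splitting step powered by $n>1$. However, you have interchanged the labels $(\Leftarrow)$ and $(\Rightarrow)$: the paragraph invoking Proposition~\ref{lem:preclustercategory}(c) \emph{assumes} that $\C$ is $n$-precluster tilting (otherwise that proposition is unavailable) and deduces the equality of orthogonals, so it is the forward direction; conversely, the paragraph that begins by setting $\Z:={}^{\perp_{n-1}}\C=\C^{\perp_{n-1}}$ assumes the equality and deduces $\tau_n(\C)\subseteq\C$, which is the backward direction. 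A minor stylistic remark: the paper avoids the detour through Proposition~\ref{lem:preclustercategory}(c) by using $\tau_n(\C)\subseteq\C$ directly in the Ext-duality, which is slightly cleaner.
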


\begin{proof}
  (a) We only prove the first inclusion since the other one is dual.
  
  Let $X$ be in $^{\perp_{n-1}}\C$, and
  $\eta\colon 0\to X\extto{f}C^0\to Y\to 0$ be an exact sequence with
  a minimal left $\C$-approximation $f$ of $X$. Applying $\Hom_\L(-,\C)$,
  one easily shows that $Y$ is in $^{\perp_{n-1}}\C$.   
  
  (b) It suffices to show that $\tau_n(\C)\subseteq \C$ and $\tau_n^-(\C)\subseteq \C$
  hold if and only if $\C^{\perp_{n-1}} = {^{\perp_{n-1}}\C}$ holds.
   
  Assume $\tau_n(\C)\subseteq \C$ and $\tau_n^-(\C)\subseteq \C$.
  Since $\C\subset{}^{\perp_{n-1}}\L$, we have
  \[\Ext^i_\L(\C,{}^{\perp_{n-1}}\C)\simeq D\Ext^{n-i}_\L({}^{\perp_{n-1}}\C,\tau_n(\C))=0\]
  for all $0<i<n$ by Lemma \ref{prop:tauextformula}.  Thus
  $^{\perp_{n-1}}\C \subseteq\C^{\perp_{n-1}}$ holds.  Similarly
  $\C^{\perp_{n-1}}\subseteq {^{\perp_{n-1}}\C}$ holds.  Consequently
  $\C^{\perp_{n-1}} = {^{\perp_{n-1}}\C}$.

  Assume that $\C^{\perp_{n-1}} = {^{\perp_{n-1}}\C}$.  By Lemma
  \ref{prop:tauextformula}, we have
  \[\Ext^i_\L({}^{\perp_{n-1}}\C,\tau_n(\C))\simeq D\Ext^{n-i}_\L(\C,{}^{\perp_{n-1}}\C)=D\Ext^{n-i}_\L(\C,\C^{\perp_{n-1}})=0\]
  for all $0<i<n$. Thus
  $\tau_n(\C)\subseteq(^{\perp_{n-1}}\C)^{\perp_{n-1}}$.  We show
  $(^{\perp_{n-1}}\C)^{\perp_{n-1}}\subseteq\C$.  Since
  $\C\subseteq{}^{\perp_{n-1}}\C$, we have
  ${}^{\perp_{n-1}}\C=\C^{\perp_{n-1}}\supseteq(^{\perp_{n-1}}\C)^{\perp_{n-1}}$.
  For any $X$ in $(^{\perp_{n-1}}\C)^{\perp_{n-1}}$, there exists an
  exact sequence $\eta\colon 0\to X\to C^0\to Y\to 0$ with $C^0\in\C$
  and $Y\in{}^{\perp_{n-1}}\C$ by (a). This splits by the assumption $n>1$, and $X$ is a direct
  summand of $C^0$. Hence $X$ is in $\C$.
  
  Similarly we prove that $\tau_n^-(\C)\subseteq \C$.
\end{proof}

Now we introduce the following category $\U(\C)$, which is an analog
of Calabi--Yau reduction of triangulated categories \cite{IY}.

\begin{defin}
For an $n$-precluster tilting subcategory $\C$ in $\mod\L$, let
\[\Z(\C)=\C^{\perp_{n-1}}={}^{\perp_{n-1}}\C\ \mbox{ and }\ \U(\C)=\Z(\C)/[\C].\]
\end{defin}

Note that when $\C$ is a $1$-precluster tilting subcategory, then
$\Z(\C)=\mod\L$, since the orthogonality condition is void.

The next result gives basic properties of $\Z(\C)$ which generalize
those of $n$-cluster tilting subcategories \cite[Theorems 2.3, 2.3.1,
2.2.3]{I}.  In particular it gives higher Auslander--Reiten
translation for $\Z(\C)$ extending Proposition
\ref{lem:preclustercategory}.

\begin{thm}\label{Z(C)}
Let $\L$ be an Artin algebra and $\C$ an $n$-precluster tilting subcategory of
$\mod\L$ with $n\ge1$.  Then we have the following.
\begin{enumerate}[\rm(a)]
\item We have equivalences $\tau_n\colon\underline{\Z(\C)}\to\overline{\Z(\C)}$
and $\tau_n^-\colon\overline{\Z(\C)}\to\underline{\Z(\C)}$.
\item $\tau_n$ and $\tau_n^-$ give bijections between indecomposable
non-projective modules in $\Z(\C)$ and indecomposable non-injective modules
in $\Z(\C)$.
\item We have $\add\{\tau_n(\Z(\C)), D\L\} = \Z(\C)=\add\{\tau^-_n(\Z(\C)), \L\}$.
\item For every $X$ and $Y$ in $\Z(\C)$ and $0<i<n$, we have functorial 
isomorphisms
\begin{eqnarray*}
\uHom_\L(X,Y)\simeq D\Ext^{n}_\L(Y,\tau_n(X)),&&\Ext^i_\L(X,Y)\simeq D\Ext^{n-i}_\L(Y,\tau_n(X))\\
\oHom_\L(X,Y)\simeq D\Ext^{n}_\L(\tau_n^-(Y),X),&&\Ext^i_\L(X,Y)\simeq D\Ext^{n-i}_\L(\tau_n^-(Y),X).
\end{eqnarray*}
\item For every $X$ in $\mod\L$, there exists an $F_{\C}$-exact sequence
\[0\to Z_{n-1}\to C_{n-2}\to\cdots\to C_0\to X\to0\]
with $C_i$ in $\C$ for every $i$ and $Z_{n-1}$ in $\Z(\C)$.
\end{enumerate}
\end{thm}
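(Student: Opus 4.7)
I would prove (a)--(d) as relatively direct consequences of the higher Auslander--Reiten theory on ${}^{\perp_{n-1}}\L$ and $D\L^{\perp_{n-1}}$ already recorded in (\ref{taun equivalences}) and Lemma~\ref{prop:tauextformula}, and prove (e) by iterating right $\C$-approximations.

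For (a), since $\L,D\L\in\C$ (as $\C$ is a generator-cogenerator), I get $\Z(\C)\subseteq{}^{\perp_{n-1}}\L\cap D\L^{\perp_{n-1}}$, so the equivalences in (\ref{taun equivalences}) already send $\underline{\Z(\C)}$ into $\overline{D\L^{\perp_{n-1}}}$ and vice versa. The crucial point is to promote this to $\tau_n(\Z(\C))\subseteq \Z(\C)$. Given $X\in\Z(\C)$, I would apply Lemma~\ref{prop:tauextformula}(b) with $A=\tau_n(X)\in D\L^{\perp_{n-1}}$ and any $C\in\C$ to obtain, for $0<i<n$,
\[
\Ext^i_\L(C,\tau_n(X))\simeq D\Ext^{n-i}_\L(\tau_n^-\tau_n(X),C)=D\Ext^{n-i}_\L(X,C)=0,
\]
the last vanishing from $X\in\C^{\perp_{n-1}}$. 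A dual argument handles $\tau_n^-$, and then (a) follows by restricting (\ref{taun equivalences}). Parts (b) and (c) are immediate corollaries of (a), using $\L,D\L\in\C\subseteq\Z(\C)$ by self-orthogonality.

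Part (d) is a direct instance of Lemma~\ref{prop:tauextformula}: the first pair of isomorphisms uses part (a) of that lemma (valid since $X\in\Z(\C)\subseteq{}^{\perp_{n-1}}\L$), and the second pair uses part (b) (valid since $Y\in\Z(\C)\subseteq D\L^{\perp_{n-1}}$).

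For (e), I would iterate right $\C$-approximations, which exist by contravariant finiteness and are surjective since $\C$ contains $\L$, producing an $F_\C$-exact complex
\[\cdots\to C_1\to C_0\to X\to 0\]
with successive kernels $K_1,K_2,\ldots$ and each piece $0\to K_{j+1}\to C_j\to K_j\to 0$ automatically $F_\C$-exact. Setting $Z_{n-1}:=K_{n-1}$ yields the desired length-$(n-1)$ sequence; the main task is to show $Z_{n-1}\in\Z(\C)$. I would do this by dimension shifting along the short exact sequences: $F_\C$-exactness makes $\Hom_\L(\C,C_j)\to\Hom_\L(\C,K_j)$ surjective, so $\Ext^1_\L(\C,K_{j+1})\hookrightarrow\Ext^1_\L(\C,C_j)=0$, and then the self-orthogonality $\Ext^i_\L(\C,\C)=0$ for $0<i<n$ lets the vanishing propagate upward through the long exact sequences to give $\Ext^i_\L(\C,K_{n-1})=0$ for every $0<i<n$. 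The main (mild) obstacle is the bookkeeping: one must check that each shift $\Ext^i_\L(\C,K_{j+1})\simeq\Ext^{i-1}_\L(\C,K_j)$ uses only Ext-degrees in the self-orthogonality range $[1,n-1]$, and this works precisely because the resolution has length $n-1$.
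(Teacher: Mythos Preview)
Your proposal is correct and follows essentially the same route as the paper. The only cosmetic difference is in (a): the paper applies Lemma~\ref{prop:tauextformula}(a) directly to get $0=D\Ext^i_\L(\Z(\C),\C)\simeq\Ext^{n-i}_\L(\C,\tau_n(\Z(\C)))$, whereas you reach the same vanishing via part (b) together with $\tau_n^-\tau_n(X)\simeq X$; parts (b)--(e) match the paper's arguments almost verbatim, including the iterated $F_\C$-projective covers and the dimension-shift bookkeeping in (e).
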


\begin{proof}
  (a) Thanks to the equivalences \eqref{taun equivalences}, it
  suffices to show $\tau_n(\Z(\C))\subseteq\Z(\C)$ and
  $\tau_n^-(\Z(\C))\subseteq\Z(\C)$. Using Lemma
  \ref{prop:tauextformula}, we have
\begin{eqnarray*}
0 = D\Ext^i_\L({\Z(\C)},\C) \simeq
\Ext^{n-i}_\L(\C,\tau_n({\Z(\C)}))\\
0 = D\Ext^i_\L(\C,\Z(\C)) \simeq
\Ext^{n-i}_\L(\tau^-_n(\Z(\C)),\C)
\end{eqnarray*}
for $0 < i < n$. Thus the assertion follows.

(b)(c) Immediate from (a).

(d) This follows from Lemma \ref{prop:tauextformula}.

(e) Let $X$ be in $\mod\L$, and let $0\to\Omega_{F_{\C}}(X)\to C_0\to X\to 0$ be an
$F_{\C}$-exact sequence given by an $F_{\C}$-projective cover.
Then $\Ext^1_\L(\C,\Omega_{F_{\C}}(X))=0$ holds. Taking an $F_{\C}$-exact sequence
$0\to \Omega_{F_{\C}}^2(X)\to C_1\to \Omega_{F_{\C}}(X)\to0$
given by an $F_{\C}$-projective cover, it follows that
$\Ext^i_\L(\C,\Omega_{F_{\C}}^2(X))=0$ for $i=1,2$.
Continuing this process we obtain that $\Ext^i_\L(\C,\Omega^{n-1}_{F_{\C}}(X))=0$ for
$0<i<n$, and hence $\Omega^{n-1}_{F_{\C}}(X)$ is in $\Z(\C)$.
\end{proof}

The following easy property below is useful.

\begin{lem}\label{FC=FC}
Let $\C$ be an $n$-precluster tilting subcategory of $\mod\L$ with $n\ge1$.
\begin{enumerate}[\rm(a)]
\item We have $F_{\C}|_{\Z(\C)\times\Z(\C)}=F^{\C}|_{\Z(\C)\times\Z(\C)}$.
\item For every $0<i<n$, we have
$\Ext^i_{F_{\C}}(-,-)|_{\Z(\C)\times\Z(\C)}=\Ext^i_\L(-,-)|_{\Z(\C)\times\Z(\C)}=
\Ext^i_{F^{\C}}(-,-)|_{\Z(\C)\times\Z(\C)}$.
\item The exact categories $(\Z(\C),F_{\C})$ and $(\Z(\C),F^{\C})$ are the same. It coincides with $(\Z(\C),\Ext^1_\L(-,-))$ if $n\ge 2$.
\end{enumerate}
\end{lem}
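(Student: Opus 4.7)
The plan is to obtain (b) directly from Proposition \ref{prop:relative=absolute}, deduce the $n\ge 2$ case of (a) for free via the identification $\Ext^1_F(-,-)=F$, and handle $n=1$ separately using \eqref{defect formula} together with Proposition \ref{lem:preclustercategory}(c); part (c) is then an unpacking of (a). I start with (b). Since $\Z(\C)=\C^{\perp_{n-1}}={}^{\perp_{n-1}}\C$, every $X\in\Z(\C)$ satisfies $\Ext^i_\L(X,\C)=0$ and $\Ext^i_\L(\C,X)=0$ for $0<i<n$, and $\C$ is functorially finite by Definition \ref{defin:n-precluster}(iv). Applying Proposition \ref{prop:relative=absolute}(a) with $\X=\C$ to $X\in{}^{\perp_{n-1}}\C$ gives $\Ext^i_{F^{\C}}(X,A)=\Ext^i_\L(X,A)$ for all $A\in\mod\L$ and $0<i<n$; dually, Proposition \ref{prop:relative=absolute}(b) applied to $Y\in\C^{\perp_{n-1}}$ gives $\Ext^i_{F_{\C}}(C,Y)=\Ext^i_\L(C,Y)$. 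Restricting both to $X,Y\in\Z(\C)$ proves (b).

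For (a), when $n\ge 2$ the case $i=1$ of (b), together with the identification $\Ext^1_F(-,-)=F$, yields $F_{\C}(X,Y)=\Ext^1_\L(X,Y)=F^{\C}(X,Y)$ on $\Z(\C)\times\Z(\C)$. When $n=1$ we have $\Z(\C)=\mod\L$ since the orthogonality range $0<i<1$ is empty, and I use $F_{\C}=F^{\tau\C}$ from \eqref{defect formula}. Proposition \ref{lem:preclustercategory}(c) provides $\add\{\tau\C,D\L\}=\C$, and since enlarging the defining subcategory of $F^{\bullet}$ by injective modules does not change the bifunctor (every short exact sequence is $\Hom_\L(-,I)$-exact for $I$ injective), I obtain $F^{\tau\C}=F^{\add\{\tau\C,\I(\L)\}}=F^{\C}$, whence $F_{\C}=F^{\C}$.

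Finally, (c) follows because, by Proposition \ref{from F to exact}, the exact structure on $\Z(\C)$ induced by a sub-bifunctor $F$ is determined by $F|_{\Z(\C)\times\Z(\C)}$; part (a) identifies $(\Z(\C),F_{\C})=(\Z(\C),F^{\C})$, and for $n\ge 2$ the argument in (a) simultaneously identifies these with $(\Z(\C),\Ext^1_\L(-,-))$. The proof is essentially bookkeeping and I do not expect any serious obstacle; the only small subtlety is that the $n=1$ case escapes the Ext-vanishing reduction and requires the separate input $\tau\C\subseteq\C$ (modulo injectives) from Proposition \ref{lem:preclustercategory}(c).
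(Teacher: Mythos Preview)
Your proposal is correct and follows essentially the same route as the paper: part (b) is derived directly from Proposition~\ref{prop:relative=absolute}, part (a) for $n\ge 2$ is the $i=1$ case of (b), the $n=1$ case uses \eqref{defect formula} together with $\add\{\tau\C,D\L\}=\C$ (the paper just says ``since $\C$ is $1$-precluster tilting,'' which amounts to the same thing), and (c) is an immediate repackaging of (a). Your write-up is simply a more explicit version of the paper's terse argument.
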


\begin{proof}
  (a) (c) For $n=1$, we have $F_{\C}=F^{\tau\C}=F^{\C}$ by
  \eqref{defect formula} since $\C$ is 1-precluster tilting, and for
  $n\ge2$, we have
  $F_{\C}|_{\Z(\C)\times\Z(\C)}=\Ext^1_{\L}(-,-)|_{\Z(\C)\times\Z(\C)}=F^{\C}|_{\Z(\C)\times\Z(\C)}$.

(b) This is immediate from Proposition \ref{prop:relative=absolute}.
\end{proof}

The category $\Z(\C)$ enjoys the following remarkable properties.

\begin{prop}\label{Z(C) is Frobenius}
  Let $\C$ be an $n$-precluster tilting subcategory of $\mod\L$ for
  some $n\geq 1$.
\begin{enumerate}[\rm(a)]
\item $\Z(\C)$ is extension closed.
\item $\Z(\C)$ has a structure of a Frobenius category whose short
  exact sequences are precisely $F_{\C}$-exact sequences,
  and projective-injective objects are precisely $\C$.
\item $\U(\C)$ has a structure of a triangulated category with the
  suspension functor $[1]=\Omega_{F^{\C}}^{-1}$.
\end{enumerate}
\end{prop}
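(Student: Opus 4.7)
For \textbf{(a)}, given a short exact sequence $0\to X\to Y\to Z\to 0$ in $\mod\L$ with $X,Z\in\Z(\C)$, the plan is to apply $\Hom_\L(\C,-)$ and read off from the long exact sequence $\Ext^i_\L(\C,X)\to\Ext^i_\L(\C,Y)\to\Ext^i_\L(\C,Z)$ that $\Ext^i_\L(\C,Y)=0$ for $0<i<n$, since both flanking terms vanish; hence $Y\in{}^{\perp_{n-1}}\C=\Z(\C)$.

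For \textbf{(b)}, combining (a) with the second assertion of Proposition \ref{from F to exact} equips $\Z(\C)$ with the exact structure whose short exact sequences are the $F_\C$-exact sequences contained in $\Z(\C)$. I then plan to verify that this exact category is Frobenius with projective-injective objects exactly $\C$, in four steps. First, every $C\in\C$ is projective directly from the definition of $F_\C$. Second, every $C\in\C$ is injective: by Lemma \ref{FC=FC}(a) the bifunctors $F_\C$ and $F^\C$ coincide on $\Z(\C)\times\Z(\C)$, so it suffices to observe that $\I(F^\C)=\add\{\C,\I(\L)\}=\C$, where the last equality uses that $\C$ is a cogenerator so $\I(\L)\subseteq\C$. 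Third, there are enough projectives: for $X\in\Z(\C)$ I take an $F_\C$-projective cover $0\to K\to C_0\to X\to 0$ in $\mod\L$, noting that $C_0\in\add\{\C,\P(\L)\}=\C$ since $\C$ is a generator, and that $K=\Omega_{F_\C}(X)\in\Z(\C)$ by Proposition \ref{left and right orthogonal}(a); this is the desired admissible epi. Dually, $F^\C$-injective envelopes supply enough injectives. Fourth, a projective object of the exact category must split off from the admissible epi just constructed, so it lies in $\C$; the analogous argument handles injectives.

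For \textbf{(c)}, applying Happel's standard theorem to the Frobenius exact category from (b) endows the stable category $\Z(\C)/[\C]=\U(\C)$ with a canonical triangulated structure, whose suspension sends $X$ to the cokernel of any admissible monomorphism from $X$ into a projective-injective object. Because the exact structures induced by $F_\C$ and $F^\C$ on $\Z(\C)$ agree (Lemma \ref{FC=FC}(a)), this cokernel is the $F^\C$-cosyzygy, giving $[1]=\Omega_{F^\C}^{-1}$.

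The principal technical hinge of the argument is the closure statement $\Omega_{F_\C}(\Z(\C))\subseteq\Z(\C)$ (and its dual), supplied by Proposition \ref{left and right orthogonal}(a). Without this one cannot produce admissible resolutions inside $\Z(\C)$, nor compute the suspension of $\U(\C)$ intrinsically; the remainder is careful bookkeeping exploiting the coincidence of $F_\C$ and $F^\C$ on $\Z(\C)\times\Z(\C)$.
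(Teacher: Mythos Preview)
Your proof is correct and follows essentially the same route as the paper: the long exact sequence for (a), Proposition~\ref{from F to exact} plus Lemma~\ref{FC=FC} and Proposition~\ref{left and right orthogonal}(a) for (b), and Happel's theorem for (c). One small slip: in (a), the vanishing $\Ext^i_\L(\C,Y)=0$ places $Y$ in $\C^{\perp_{n-1}}$, not in ${}^{\perp_{n-1}}\C$; but since these coincide for an $n$-precluster tilting $\C$, the conclusion $Y\in\Z(\C)$ is unaffected.
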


\begin{proof}
(a) This is easily checked by using the long exact sequence of $\Ext$'s.

(b) By Proposition \ref{from F to exact}, we have an exact category
$(\Z(\C),F_{\C})$, which coincides with $(\Z(\C),F^{\C})$ by Lemma \ref{FC=FC}(c).
Thus any object in $\C$ is projective-injective in $\Z(\C)$.  By Proposition
\ref{left and right orthogonal}(a), $\Z(\C)$ has enough projectives and enough
injectives.  Therefore the projective and the injective objects
coincide, and they are equal to $\C$.  Thus the assertion follows.

(c) This is a general property of Frobenius categories \cite{Ha}.
\end{proof}

We show that the triangulated category $\U(\C)$ admits a Serre functor,
which is an analog of \cite[Theorem 4.7]{IY}.

\begin{thm}\label{Serre functor}
Let $\C$ be an $n$-precluster tilting subcategory of $\mod\L$ with $n\ge1$.
\begin{enumerate}[\rm(a)]
\item The triangulated category $\U(\C)$ admits a Serre functor $S$
  given by $S=[n]\circ\tau_n$.
\item The triangulated category $\U(\C)$ has almost split triangles,
  i.e.\ any indecomposable object $X$ in $\U(\C)$ has almost split
  triangles in $\U(\C)$:
\[SX[-1]\to E\to X\to SX\ \mbox{ and }\ X\to E'\to S^{-1}X[1]\to X[1].\]
\item The Frobenius category $\Z(\C)$ has almost split sequences,
  i.e.\ any indecomposable module $X$ in $\Z(\C)\setminus\C$ has almost
  split sequences in $\Z(\C)$
\[0\to\tau_{\Z(\C)}(X)\to E\to X\to0\ \mbox{ and }\ 0\to X\to E'\to\tau^-_{\Z(\C)}(X)\to0,\]
where $\tau_{\Z(\C)}:=\Omega_{\Z(\C)}^{-(n-1)}\tau_n$ and $\tau_{\Z(\C)}^-:=\Omega_{\Z(\C)}^{n-1}\tau_n^-$.
\end{enumerate}
\end{thm}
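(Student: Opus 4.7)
The plan is to establish (a) directly via relative homological algebra, deduce (b) from the general theory of triangulated categories with Serre functor, and then translate (b) back to (c) via the standard Frobenius correspondence. The main subtlety will be bookkeeping across the different stable categories $\umod_{F^\C}\L$, $\umod_{F_\C}\L$, and $\U(\C)$.

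For (a), since $\U(\C)$ is the stable category of the Frobenius $(\Z(\C), F_\C)$ (Proposition \ref{Z(C) is Frobenius}) with projective-injectives $\C$, for $X,Y \in \Z(\C)$ we have $\Hom_{\U(\C)}(X,Y) = \uHom_{F_\C}(X, Y)$ and $\Hom_{\U(\C)}(Y, [n]\tau_n X) = \Ext^n_{F_\C}(Y, \tau_n X)$. The plan is to prove the chain of natural isomorphisms
\[\Ext^n_{F_\C}(Y, \tau_n X) \simeq \Ext^1_{F_\C}(Y, \tau X) \simeq D\uHom_{F_\C}(X, Y),\]
where the second is the relative Auslander--Reiten formula (Proposition \ref{prop:AR-formula}). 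For the first, I would first establish $\tau_n X \simeq \Omega_{F_\C}^{n-1}(\tau X)$ in $\umod_{F_\C}\L$: since $X \in {}^{\perp_{n-1}}\C$, the vanishing $\Ext^{i+1}_\L(X,\C)=0$ for $0\le i\le n-2$ shows that the first $n-1$ steps of the minimal $\L$-projective resolution of $X$ are $F^\C$-exact, yielding $\Omega_\L^{n-1} X \simeq \Omega_{F^\C}^{n-1} X$ in $\umod_{F^\C}\L$, and iterating Theorem \ref{prop:exttauisom}(b) gives $\tau\Omega_{F^\C}^{n-1} \simeq \Omega_{F_\C}^{n-1}\tau$; hence $\tau_n X = \tau\Omega_\L^{n-1} X \simeq \Omega_{F_\C}^{n-1}\tau X$. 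A standard dimension shift then yields $\Ext^n_{F_\C}(Y, \Omega_{F_\C}^{n-1}\tau X) \simeq \Ext^1_{F_\C}(Y, \tau X)$, using the vanishing $\Ext^k_{F_\C}(Y, C) = 0$ for $Y \in \Z(\C)$, $C \in \C$, and $k \geq 1$, which holds because the $F_\C$-projective resolution of $Y \in \Z(\C)$ stays in $\Z(\C)$ by Proposition \ref{left and right orthogonal}(a), so this Ext agrees with that in the Frobenius $\Z(\C)$, where $\C$ is injective. That $S = [n]\tau_n$ is an auto-equivalence of $\U(\C)$ follows from $\tau_n(\C) \subseteq \C$ and $\tau_n^-(\C) \subseteq \C$ by Definition \ref{defin:n-precluster}.

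Part (b) will then follow from the standard fact that any Hom-finite Krull--Schmidt $R$-linear triangulated category with a Serre functor has Auslander--Reiten triangles, with translate $S[-1] = [n-1]\tau_n$.

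For (c), an almost split triangle in $\U(\C)$ lifts to an almost split sequence in the Frobenius $\Z(\C)$ by the standard correspondence between stable-category almost split triangles and Frobenius-category almost split sequences. The formula $\tau_{\Z(\C)} = \Omega_{\Z(\C)}^{-(n-1)}\tau_n$ is then a reformulation of the Auslander--Reiten translate in $\U(\C)$ being $S[-1] = [n-1]\tau_n$, recognizing $[n-1] = \Omega_{\Z(\C)}^{-(n-1)}$ in $\U(\C)$; the statement for $\tau_{\Z(\C)}^-$ is dual. The hardest step of the plan will be verifying functoriality throughout the chain in (a); in particular, the identification $\tau_n X \simeq \Omega_{F_\C}^{n-1}\tau X$ holds a priori only in $\umod_{F_\C}\L$, and one must confirm it transfers cleanly to the subcategory $\U(\C) \subseteq \umod_{F_\C}\L$ before invoking the Frobenius dimension shift.
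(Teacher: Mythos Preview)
Your plan is correct and rests on the same two tools the paper uses---Theorem~\ref{prop:exttauisom} and the relative Auslander--Reiten formula (Proposition~\ref{prop:AR-formula})---with (b) deduced from (a) via \cite{RV} and (c) lifted from (b) exactly as you describe. The only difference is in the bookkeeping for (a): where you rewrite $\tau_nX\simeq\Omega_{F_\C}^{n-1}\tau X$ via Theorem~\ref{prop:exttauisom}(b) and then dimension-shift on the $F_\C$-side (using your Frobenius argument for $\Ext^{\ge1}_{F_\C}(\Z(\C),\C)=0$), the paper instead applies Theorem~\ref{prop:exttauisom}(c) to transport $\Ext^n_{F_\C}(Y,\tau_nX)$ to $\Ext^n_{F^\C}(\tau^-Y,\Omega_\L^{n-1}X)$, shifts down using the immediate fact $\P(\L)\subset\I(F^\C)$, and lands at $D\oHom_{F^\C}(X,Y)$. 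The paper's dimension shift is marginally more elementary (it does not need Proposition~\ref{Z(C) is Frobenius}), while yours has the virtue of staying entirely on the $F_\C$-side and not juggling both relative theories at once; the functoriality concern you flag at the end is handled in both routes by the naturality built into Theorem~\ref{prop:exttauisom} and Proposition~\ref{prop:AR-formula}.
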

\begin{proof}
(a) Let $F=F^{\C}$ and $G=F_{\C}$. Then $\I(F)=\P(G)=\C$ holds.
For all $X$ in $\Z(\C)$, take an $F$-injective coresolution
\[0\to \tau_n(X)\to C^0\to\cdots\to C^{n-1}\to\Omega^{-n}_F(\tau_n(X))\to 0,\]
which gives a $G$-projective resolution since $\Z(\C)$ is Frobenius
by Proposition \ref{Z(C) is Frobenius}.
Thus, for every $Y$ in $\mod\L$, we obtain functorial isomorphisms
\begin{align}
\Ext^n_G(Y,\tau_n(X))&\simeq
\Coker(\Hom_\L(Y,C^{n-1})\to\Hom_\L(Y,\Omega^{-n}_F(\tau_n(X)))\notag\\
&\simeq\uHom_G(Y,\Omega^{-n}_F(\tau_n(X))).\notag
\end{align}
Moreover we have functorial isomorphisms
\begin{align}
\Ext^n_G(Y,\tau_n(X))& \simeq \Ext^n_F(\tau^-(Y),\Omega_\L^{n-1}(X))\notag\\
& \simeq \Ext^1_F(\tau^-(Y),X)\notag\\
& \simeq D\oHom_{F}(X,Y),\notag
\end{align}
where the first isomorphism is given by Theorem \ref{prop:exttauisom},
the second follows from $\P(\L)\subset\I(F)$,
and the third is the Auslander--Reiten duality (Proposition \ref{prop:AR-formula}).
Combining them, for every $Y$ in $\Z(\C)$, we have functorial isomorphisms
\begin{align}
\Hom_{\U(\C)}(Y,\Omega_F^{-n}(\tau_n(X)))&=\uHom_G(Y,\Omega^{-n}_F(\tau_n(X)))\notag\\
&\simeq D\oHom_{F}(X,Y)=D(\Hom_{\U(\C)}(X,Y)).
\end{align}
Thus $\U(\C)$ has $S=\Omega_F^{-n}\tau_n=[n]\circ\tau_n$ as a Serre functor. 

(b) Since $\U(\C)$ has a Serre functor, it has almost split triangles \cite{RV}.

(c) Immediate from (b).
\end{proof}

The next bijective correspondence is an analog of a property of
Calabi--Yau reduction of triangulated categories \cite[Theorem 4.9]{IY}.
One of the consequences is that, if $\P_n\vee\I_n$ is a functorially finite subcategory
of $\mod\L$, then the classification problem of $n$-cluster tilting subcategories in $\mod\L$
can be reduced to the same problem in the triangulated category $\U(\P_n\vee\I_n)$.

\begin{thm}
Let $\C$ be an $n$-precluster tilting subcategory of $\mod\L$ with $n\ge1$.
Then there exists a bijection between $n$-cluster tilting subcategories of $\mod\L$
containing $\C$ and $n$-cluster tilting subcategories of $\U(\C)$ given by
$\C'\mapsto\C'/[\C]$.
\end{thm}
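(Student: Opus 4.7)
The plan is to define the inverse to $\Phi(\C'):=\C'/[\C]$ as $\Psi(\D):=\pi^{-1}(\D)=\{X\in\Z(\C):\pi(X)\in\D\}$, where $\pi\colon\Z(\C)\to\U(\C)$ is the quotient functor, and to verify both are well-defined and mutually inverse. The key computational input, valid for $X,Y\in\Z(\C)$ and $0<i<n$, is
\[\Hom_{\U(\C)}(X,Y[i])\simeq\Ext^i_{F^{\C}}(X,Y)=\Ext^i_\L(X,Y),\]
which combines the standard Frobenius identity (using the shift $[1]=\Omega^{-1}_{F^{\C}}$ from Proposition \ref{Z(C) is Frobenius}) with Lemma \ref{FC=FC}(b). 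This identification translates all Ext-orthogonality between objects of $\Z(\C)$ into shift-orthogonality in $\U(\C)$.

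To see that $\Phi$ is well-defined, first note that any $n$-cluster tilting $\C'\supseteq\C$ in $\mod\L$ satisfies $\C'=\C'^{\perp_{n-1}}\subseteq\C^{\perp_{n-1}}=\Z(\C)$, so $\C'/[\C]\subseteq\U(\C)$ is defined. Rigidity in $\U(\C)$ is immediate from $\Ext^i_\L(\C',\C')=0$ via the translation. For the maximality conditions $\C'/[\C]=(\C'/[\C])^{\perp_{n-1}}={}^{\perp_{n-1}}(\C'/[\C])$ in $\U(\C)$, if $W\in\Z(\C)$ represents an object with vanishing shift-Hom to or from $\C'/[\C]$ in the range $0<i<n$, then $\Ext^i_\L(\C',W)=0=\Ext^i_\L(W,\C')$ for $0<i<n$, hence $W\in\C'$. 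Functorial finiteness of $\C'/[\C]$ in $\U(\C)$ descends from that of $\C'$ in $\mod\L$: a right $\C'$-approximation $Y\to X$ for $X\in\Z(\C)$ descends to $\pi(Y)\to\pi(X)$, and factorizations in $\mod\L$ persist in the quotient.

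For $\Psi$, let $\D$ be $n$-cluster tilting in $\U(\C)$ and set $\C':=\pi^{-1}(\D)$; then $\C\subseteq\C'$ since $\pi(\C)=0\in\D$. Rigidity: for $X,Y\in\C'\subseteq\Z(\C)$, $\Ext^i_\L(X,Y)\simeq\Hom_{\U(\C)}(\pi X,\pi Y[i])=0$ for $0<i<n$. For maximality, suppose $W\in\mod\L$ satisfies $\Ext^i_\L(\C',W)=0$ for $0<i<n$; then $\Ext^i_\L(\C,W)=0$, so $W\in\C^{\perp_{n-1}}=\Z(\C)$, and the translation yields $\Hom_{\U(\C)}(\D,\pi(W)[i])=0$ for $0<i<n$, forcing $\pi(W)\in\D^{\perp_{n-1}}=\D$ in $\U(\C)$, hence $W\in\C'$. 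The other orthogonal inclusion is dual.

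The main obstacle is the functorial finiteness of $\C'=\Psi(\D)$ in $\mod\L$. For a right $\C'$-approximation of a given $X\in\mod\L$, I would apply Theorem \ref{Z(C)}(e) to obtain an $F_{\C}$-exact sequence $0\to Z\to C_{n-2}\to\cdots\to C_0\to X\to 0$ with $Z\in\Z(\C)$ and $C_i\in\C\subseteq\C'$; then take a right $\D$-approximation $\bar\alpha\colon\bar D\to\pi(Z)$ in $\U(\C)$, lift it to $\tilde\alpha\colon\tilde D\to Z$ in $\Z(\C)$ with $\tilde D\in\C'$, and splice these data via iterated pushouts along the sequence to produce a right $\C'$-approximation of $X$. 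Left approximations are dual using the injective version of Theorem \ref{Z(C)}(e). Once functorial finiteness is established, mutual inversion is immediate: $\Psi\Phi(\C')=\pi^{-1}(\C'/[\C])=\C'$ because $\C\subseteq\C'\subseteq\Z(\C)$, and $\Phi\Psi(\D)=\pi^{-1}(\D)/[\C]=\D$ because $\ker\pi=[\C]$, completing the bijection.
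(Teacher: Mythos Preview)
Your approach is exactly the paper's --- the crux is Lemma \ref{FC=FC}(b), which identifies $\Ext^i_\L$ on $\Z(\C)$ with $\Hom_{\U(\C)}(-,-[i])$, and you spell out the details the paper leaves implicit. However, the functorial finiteness of $\C'=\Psi(\D)$ in $\mod\L$ does not follow from the mechanism you sketch. Given the $F_\C$-exact sequence $0\to Z\to C_{n-2}\to\cdots\to C_0\to X\to 0$ and a lift $\tilde\alpha\colon\tilde D\to Z$, there is no ``iterated pushout along the sequence'' producing a right $\C'$-approximation of $X$: pushouts go the wrong way (they are taken along maps \emph{out of} the left-hand term), and for $D'\in\C'$ a morphism $D'\to X$ need not lift to $C_0$, since $\Ext^1_\L(D',\Ker(C_0\to X))$ has no reason to vanish --- the intermediate kernels lie only in $\C^{\perp_j}$, not in $\Z(\C)$, so Lemma \ref{FC=FC}(b) does not apply to them.

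The clean repair is to factor through $\Z(\C)$. By Theorem \ref{prop:Dperpfunctfinite}(b), $\Z(\C)$ is functorially finite in $\mod\L$, so it suffices to show $\C'$ is functorially finite in $\Z(\C)$. For $X\in\Z(\C)$, lift a right $\D$-approximation $\bar f\colon\bar D\to\pi(X)$ to $f\colon D\to X$ and take a right $\C$-approximation $g\colon C\to X$; then $(f,g)\colon D\oplus C\to X$ is a right $\C'$-approximation in $\Z(\C)$, since for any $h\colon D'\to X$ with $D'\in\C'$ one has $\overline h=\overline f\,\overline k$, and $h-fk$ factors through $\C$, hence through $g$. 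Composing with a right $\Z(\C)$-approximation of an arbitrary $X\in\mod\L$ then yields a right $\C'$-approximation; the covariant side is dual. With this correction, your argument is complete and agrees with the paper's (terser) proof.
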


\begin{proof}
Any $n$-cluster tilting subcategory of $\mod\L$ containing $\C$ is clearly contained in
$\Z(\C)$. On the other hand, let $\C'$ be a subcategory of $\mod\L$ containing $\C$.
It follows from Lemma \ref{FC=FC}(b) that $\C'$ is an $n$-cluster tilting subcategory
of $\mod\L$ if and only if $\C'/[\C]$ is an $n$-cluster tilting subcategory of $\U(\C)$.
Thus the assertion follows.
\end{proof}

Next we give another proof of existence of almost split sequences in $\Z(\C)$
by showing that $\Z(\C)$ is functorially finite. 

\begin{thm}\label{prop:Dperpfunctfinite}
Let $\C$ be an $n$-precluster tilting subcategory of $\mod\L$
with $n\geq 1$.
\begin{enumerate}[\rm(a)]
\item $\Z(\C)$ is $F_\C$-resolving and $F^\C$-coresolving in $\mod\L$.
\item $\Z(\C)$ is functorially finite in $\mod\L$
with $\Z(\C)$-$\resdim_{F_\C}(\mod\L) \le n-1$ and $\id_{F_\C}\C \le n-1$.
\item $\Z(\C)$ has almost split sequences.
\end{enumerate}
\end{thm}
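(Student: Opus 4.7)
The plan is to deduce (a), (b), (c) in sequence, with Proposition \ref{Auslander-Buchweitz} as the bridge from the resolving property to functorial finiteness.

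For (a), I would directly verify the three axioms of an $F_\C$-resolving subcategory. Extension closedness of $\Z(\C)$ is already Proposition \ref{Z(C) is Frobenius}(a); the containment $\P(F_\C)=\C\subseteq\Z(\C)$ is immediate from the description of $\P(F_\C)$ and the hypothesis that $\C$ is a generator; and closedness under kernels of $F_\C$-epimorphisms reduces to a short chase. Applying $\Hom_\L(\C,-)$ to the given $F_\C$-exact sequence $0\to A\to B\to C'\to 0$ produces an exact sequence of Homs by $F_\C$-exactness, so the long exact sequence of $\Ext_\L^*(\C,-)$, together with the vanishing of $\Ext_\L^i(\C,B)$ and $\Ext_\L^{i-1}(\C,C')$ for $0<i<n$, forces $\Ext_\L^i(\C,A)=0$ in the same range. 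The $F^\C$-coresolving statement is entirely dual, using $\Z(\C)={}^{\perp_{n-1}}\C$ in place of $\Z(\C)=\C^{\perp_{n-1}}$.

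For (b), the key input is Theorem \ref{Z(C)}(e), which provides an $F_\C$-resolution of length at most $n-1$ of any $X\in\mod\L$ by objects of $\C$, terminating in $\Z(\C)$. This yields both $\Z(\C)$-$\resdim_{F_\C}(\mod\L)\le n-1$ and $\widehat{\Z(\C)}=\mod\L$. Combined with the Frobenius structure on $(\Z(\C),F_\C)$ from Proposition \ref{Z(C) is Frobenius}(b) (supplying enough $F_\C$-injectives), Proposition \ref{Auslander-Buchweitz}(a) delivers contravariant finiteness of $\Z(\C)$; the dual argument applied to the $F^\C$-coresolving subcategory delivers covariant finiteness, so $\Z(\C)$ is functorially finite. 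The bound $\id_{F_\C}\C\le n-1$ then follows from Proposition \ref{Auslander-Buchweitz}(b), once I verify $\C\subseteq\Z(\C)^{\perp_{F_\C}}$. For this, I would note that by Proposition \ref{left and right orthogonal}(a) the functor $\Omega_{F_\C}$ preserves $\Z(\C)$, so an $F_\C$-projective resolution of any $Z\in\Z(\C)$ may be chosen inside $\Z(\C)$; consequently $\Ext_{F_\C}^i(Z,-)$ agrees with the relative Ext computed inside the Frobenius category $(\Z(\C),F_\C)$, in which $\C$ is the subcategory of injective objects, so these groups vanish for all $i>0$.

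For (c), since $\Z(\C)$ is extension closed and functorially finite in $\mod\L$, existence of almost split sequences is then a standard consequence of the Auslander--Smalo criterion.

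The main subtlety will lie in (b), where one must keep carefully distinct the two roles of $\Z(\C)$: as an $F_\C$-resolving subcategory of the ambient exact category $(\mod\L,F_\C)$, to which Auslander--Buchweitz is applied, and as a Frobenius exact category in its own right, in which $\C$ is the subcategory of injective objects. Reconciling these two viewpoints by checking that the relative higher Ext groups coincide on modules in $\Z(\C)$ is the step that makes the argument for $\id_{F_\C}\C\le n-1$ go through cleanly.
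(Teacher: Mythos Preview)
Your proposal is correct and follows essentially the same route as the paper: part (a) via the long exact sequence of $\Ext$'s, part (b) via Theorem \ref{Z(C)}(e) to get the resolution bound, Proposition \ref{Z(C) is Frobenius}(b) to supply enough injectives in $(\Z(\C),F_\C)$, and then Proposition \ref{Auslander-Buchweitz} for contravariant finiteness and the injective-dimension bound, with the dual argument for covariant finiteness; part (c) via Auslander--Smal\o.

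The one place you go beyond the paper is in justifying $\id_{F_\C}\C\le n-1$: the paper simply cites Proposition \ref{Auslander-Buchweitz}(b), which gives $\id_{F_\C}(\Z(\C)^{\perp_{F_\C}})=\Z(\C)\text{-}\resdim_{F_\C}(\mod\L)\le n-1$, leaving implicit the containment $\C\subseteq\Z(\C)^{\perp_{F_\C}}$. You correctly identify this as the point requiring care and supply the argument. Note that your route through Proposition \ref{left and right orthogonal}(a) and comparison of Ext groups, while valid, can be streamlined: since $\Z(\C)$ is $F_\C$-resolving and $F_\C$-extension closed, any $F_\C$-extension of a module in $\Z(\C)$ by an object of $\C$ already lies in $\Z(\C)$ and hence splits because $\C$ is injective in $(\Z(\C),F_\C)$; dimension-shifting then kills the higher $\Ext^i_{F_\C}(\Z(\C),\C)$ as well. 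This is the standard fact that injectives of an $F$-resolving subcategory are automatically $\Ext_F$-orthogonal to it, which is what the paper is tacitly invoking.
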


\begin{proof}
Since the case $n=1$ is clear, we assume $n>1$ in the rest.

(a) This is easily checked by using the long exact sequence of $\Ext$'s.

(b) By Theorem \ref{Z(C)}(e) we have $\Z(\C)$-$\resdim_{F_\C}(\mod\L)
\leq n-1$ and hence $\widehat{\Z(\C)} = \mod\L$.  Moreover
$\Z(\C)$ is $F_\C$-resolving by (a), and the exact category
$(\Z(\C),F_\C)$ has enough injectives $\C$ by Proposition \ref{Z(C) is
  Frobenius}(b). Therefore, by Proposition \ref{Auslander-Buchweitz},
$\Z(\C)$ is contravariantly finite in $\mod\L$ and $\id_{F_\C}\C
\leq n-1$ holds.  Dual arguments show that $\Z(\C)$ is
covariantly finite, hence it is functorially finite.

(c) Since $\Z(\C)$ is extension closed and functorially finite, it has
almost split sequences \cite{ASm}.
\end{proof}

We end this section with the following observation on
Auslander--Buchweitz type approximations by $\Z(\C)$.

\begin{cor}\label{approximation sequence}
Let $\C$ be an $n$-precluster tilting subcategory of $\L$ with $n\ge1$ and $X$ in $\mod\L$.
\begin{enumerate}[\rm(a)]
\item For every $0\le i\le n-1$, there exists an $F_{\C}$-exact sequence
\begin{equation}\label{Z_i}
0\to C_{n-1}\xrightarrow{f_{n-1}}\cdots\xrightarrow{f_{i+2}}C_{i+1}\xrightarrow{f_{i+1}}Z_i\xrightarrow{f_i}
C_{i-1}\xrightarrow{f_{i-1}}\cdots\xrightarrow{f_1}C_0\xrightarrow{f_0}X\to0
\end{equation}
with $Z_i$ in $\Z(\C)$ and $C_j$ in $\C$ for every $j$. Moreover $\Im f_j$ is in $\C^{\perp_j}$ for every $j$.
\item For every $0\le i\le n-1$, there exists an $F^{\C}$-exact sequence
\[0\to X\xrightarrow{f^0}C^0\xrightarrow{f^1}\cdots\xrightarrow{f^{i-1}}C^{i-1}\xrightarrow{f^i}Z^i\xrightarrow{f^{i+1}}
C^{i+1}\xrightarrow{f^{i+2}}\cdots\xrightarrow{f^{n-1}}C^{n-1}\to0\]
with $Z^i$ in $\Z(\C)$ and $C^j$ in $\C$ for every $j$. Moreover $\Im f^j$ is in ${}^{\perp_j}\C$ for every $j$.
\end{enumerate}
\end{cor}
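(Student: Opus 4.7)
The plan is to prove (a) by descending induction on $i$, with the base case $i=n-1$ supplied directly by Theorem \ref{Z(C)}(e); part (b) then follows by the analogous dual argument, using the Frobenius structure of $(\Z(\C),F^\C)$, which coincides with $(\Z(\C),F_\C)$ by Lemma \ref{FC=FC}.

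For the inductive step, assume we have an $F_\C$-exact sequence
\[\zeta_{i+1}:\ 0\to C_{n-1}\to\cdots\to C_{i+2}\xrightarrow{\delta} Z_{i+1}\xrightarrow{\gamma} C_i\to\cdots\to C_0\to X\to 0\]
with $Z_{i+1}\in\Z(\C)$. By Proposition \ref{Z(C) is Frobenius}, the exact category $(\Z(\C),F_\C)$ is Frobenius with projective-injective objects precisely $\C$, so $Z_{i+1}$ has an $F_\C$-injective envelope in $\Z(\C)$, namely an $F_\C$-exact sequence $0\to Z_{i+1}\xrightarrow{\iota}J\to Z_i\to 0$ with $J\in\C$ and $Z_i\in\Z(\C)$. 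Form the pushout of the span $J\xleftarrow{\iota}Z_{i+1}\xrightarrow{\gamma}C_i$, obtaining an object $B$ that fits in an $F_\C$-exact sequence $0\to C_i\to B\to Z_i\to 0$. Since $C_i$ is injective in the Frobenius category $\Z(\C)$ and $B\in\Z(\C)$ (by extension closure, Proposition \ref{Z(C) is Frobenius}(a)), this sequence splits and $B\cong C_i\oplus Z_i\in\Z(\C)$. Replacing the segment $C_{i+2}\xrightarrow{\delta}Z_{i+1}\xrightarrow{\gamma}C_i$ in $\zeta_{i+1}$ with $C_{i+2}\xrightarrow{\iota\delta}J\to B\to C_{i-1}$ (where $B\to C_{i-1}$ is induced by $B\twoheadrightarrow\Coker\gamma\hookrightarrow C_{i-1}$, using $(C_i\to C_{i-1})\circ\gamma=0$ from exactness of $\zeta_{i+1}$) produces the required $F_\C$-exact sequence
\[\zeta_i:\ 0\to C_{n-1}\to\cdots\to C_{i+2}\to J\to B\to C_{i-1}\to\cdots\to C_0\to X\to 0\]
with $J\in\C$ at position $i+1$ and $B\in\Z(\C)$ at position $i$, as required.

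The condition $\Im f_j\in\C^{\perp_j}$ for $\zeta_i$ is automatic by upward induction on $j$: starting from $\Im f_0=X\in\C^{\perp_0}$ (vacuous), each $F_\C$-exact short piece $0\to\Im f_j\to A_{j-1}\to\Im f_{j-1}\to 0$ has middle term $A_{j-1}$ in $\C$ or $\Z(\C)$, and the long exact sequence of $\Ext_\L^\bullet(\C,-)$ combined with $\Ext_\L^k(\C,\Z(\C))=0$ for $0<k<n$ yields $\Im f_j\in\C^{\perp_j}$. The main technical subtlety is verifying exactness of $\zeta_i$ at $J$, since $\gamma$ need not be injective when $i+1<n-1$: the induced map $J\to B$ then has kernel $\iota(\ker\gamma)$, but this precisely equals the image of $C_{i+2}\xrightarrow{\iota\delta}J$, because $\Im\delta=\ker\gamma$ by exactness of $\zeta_{i+1}$ and $\iota$ is a monomorphism. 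Standard diagram chasing in the pushout square, together with the inductively maintained $\C^{\perp_j}$-condition on the syzygies, completes the verification.
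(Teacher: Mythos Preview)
Your proof is correct and follows essentially the same route as the paper's: descending induction on $i$ with base case $i=n-1$ from Theorem \ref{Z(C)}(e), and inductive step via the injective hull $0\to Z_{i+1}\to J\to Z_i\to 0$ in the Frobenius category $(\Z(\C),F_{\C})$ followed by a pushout along $Z_{i+1}\to C_i$, observing that the resulting extension of $Z_i$ by $C_i$ splits. The paper draws the pushout as a two-row diagram and simply asserts that the lower row is the desired sequence; you give a bit more detail (the description of $B\to C_{i-1}$, the kernel computation at $J$, and the inductive verification of $\Im f_j\in\C^{\perp_j}$, which the paper leaves implicit). One expository point: your final paragraph reads as if you first establish $\Im f_j\in\C^{\perp_j}$ for $\zeta_i$ \emph{using} $F_{\C}$-exactness of $\zeta_i$, and only afterwards verify that $F_{\C}$-exactness---the cleaner logical order is to use the $\C^{\perp_j}$-condition already known for $\zeta_{i+1}$ (in particular $\Im\delta\in\C^{\perp_{i+2}}\subseteq\C^{\perp_1}$) to see that the two new short pieces of $\zeta_i$ are $F_{\C}$-exact, and then deduce the $\C^{\perp_j}$-condition for $\zeta_i$.
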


\begin{proof}
(a) The case $i=n-1$ was shown in Theorem \ref{Z(C)}(e).

Assume that the sequence \eqref{Z_i} exists.
Take an exact sequence $0\to Z_i\to C_i\to Z'_{i-1}\to0$ with $C_i$ in $\C$ and $Z'_{i-1}$ in $\Z(\C)$,
and consider the pushout diagram.
\[\xymatrix@C=1.2em@R=1em{
&&&&0\ar[d]&0\ar[d]\\
0\ar[r]&C_{n-1}\ar[r]\ar@{=}[d]&\cdots\ar[r]&C_{i+1}\ar[r]\ar@{=}[d]&Z_i\ar[r]\ar[d]&C_{i-1}\ar[r]\ar[d]&C_{i-2}\ar[r]\ar@{=}[d]&\cdots\ar[r]&C_0\ar[r]\ar@{=}[d]&X\ar[r]\ar@{=}[d]&0\\
0\ar[r]&C_{n-1}\ar[r]&\cdots\ar[r]&C_{i+1}\ar[r]&C_i\ar[r]\ar[d]&Z_{i-1}\ar[r]\ar[d]&C_{i-2}\ar[r]&\cdots\ar[r]&C_0\ar[r]&X\ar[r]&0\\
&&&&Z'_{i-1}\ar@{=}[r]\ar[d]&Z'_{i-1}\ar[d]\\
&&&&0&0
}\]
Since $Z_{i-1}\simeq Z'_{i-1}\oplus C_{i-1}$ belongs to $\Z(\C)$, the lower sequence gives the desired sequence for $i-1$.
Thus the assertion follows inductively.

(b) This is dual to (a).
\end{proof}

\section{Auslander--Solberg correspondence}\label{section:4}

In \cite{I3}, an Artin algebra $\G$ is called an \emph{$n$-Auslander algebra} if
$\domdim\G\geq n+1 \geq \gldim\G$. In addition,
there exists a bijection between the set of equivalence classes of
$n$-cluster tilting subcategories with additive generators and
Morita-equivalence classes of $n$-Auslander algebras. A slightly more
general class of Artin algebras were considered in \cite{ASIV} for
$n=1$, namely Artin Gorenstein algebras $\G$ with $\domdim\G=2=\id
{_\G\G}$. These algebras were classified in \cite{ASIV}
in terms of $\tau$-selfinjective algebras $\L$.  
If one considers the weaker condition $\domdim\G\geq 2\geq \id {_\G\G}$,
then the difference is precisely the class of selfinjective algebras
(Proposition \ref{basic properties of minimal AG}).

This section is devoted to extending the above result by exhibiting a
bijection between the Morita-equivalence classes of Artin Gorenstein
algebras $\G$ with $\domdim\G\geq n+1\geq \id {_\G\G}$ for
$n\geq 1$ and equivalence classes of finite $n$-precluster
tilting subcategories.  This class of Gorenstein algebras we call
\emph{$n$-minimal Auslander--Goren\-stein algebras}.

First we give some general properties of the class of
$n$-minimal Auslander--Gorenstein algebras. 

\begin{prop}\label{basic properties of minimal AG}
\begin{enumerate}[\rm(a)]
\item \sloppypar If $\G$ is an $n$-minimal Auslander--Gorenstein algebra, then
  either $\domdim\G=n + 1 =\id_\G\G$ holds or $\G$ is
  selfinjective. In the latter case $\G$ is
  an $m$-Auslander--Gorenstein algebra for all $m\geq 0$.
\item An algebra $\G$ is $n$-minimal Auslander--Gorenstein if and only if
  $\G^\op$ is $n$-minimal Auslander--Gorenstein.
\end{enumerate}
\end{prop}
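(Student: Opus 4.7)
For part (a), my plan is to run a standard splitting argument along the minimal injective coresolution $0 \to \G \to I^0 \to I^1 \to \cdots \to I^k \to 0$ of ${}_{\G}\G$, where $k = \id_\G\G$. The assumption $\domdim\G \ge n+1$ guarantees that every $I^j$ with $j \le n$ is projective-injective; in particular, if $k < \domdim\G$, then all the terms $I^0,\ldots,I^k$ of this resolution are projective-injective. Setting $M_j := \Ker(I^j \to I^{j+1})$ (with $I^{k+1}=0$), the terminal cosyzygy $M_k = I^k$ is projective, so the short exact sequence $0 \to M_{k-1} \to I^{k-1} \to M_k \to 0$ splits, making $M_{k-1}$ a summand of a projective-injective. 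Descending induction on $j$ shows that $M_j$ is projective-injective for every $j$, and hence $\G = M_0$ is injective. Thus whenever $\id_\G\G < \domdim\G$, the algebra $\G$ is selfinjective.

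In our setting this forces the only non-selfinjective possibility to be $\id_\G\G = n+1 = \domdim\G$, which is the claimed dichotomy. If $\G$ is selfinjective, then $\id_\G\G = 0$ and $\domdim\G = \infty$, so the defining condition $\domdim\G \ge m+1 \ge \id_\G\G$ holds for every $m \ge 0$, giving the final clause.

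For part (b), I would reduce everything to two known left-right symmetries. First, by Müller's theorem the dominant dimension is left-right symmetric as soon as it is at least one; our hypothesis gives $\domdim\G \ge n+1 \ge 2$, so the inequality $\domdim\G \ge n+1$ transfers between $\G$ and $\G^{\op}$. Second, for an algebra with both $\id_\G\G$ and $\id\G_\G$ finite (i.e.\ Iwanaga--Gorenstein), these two injective dimensions coincide; this is a classical fact that can be proved by a dimension-shift along the minimal injective coresolutions of ${}_\G\G$ and $\G_\G$ combined with the duality $D$. Combining, the conditions $\domdim\G \ge n+1$ and $\id_\G\G \le n+1$ are each preserved under $\G \leftrightarrow \G^{\op}$, so the property of being $n$-minimal Auslander--Gorenstein is left-right symmetric.

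The principal obstacle is the splitting argument in (a): it must be set up cleanly, but once one observes that a projective cokernel lets the resolution collapse from the top down, the rest is immediate. Part (b) is then essentially bookkeeping with two well-known invariance results.
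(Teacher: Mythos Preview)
Your argument for (a) is correct and follows the same approach as the paper: both exploit that a projective last term in the minimal injective coresolution forces a splitting and hence selfinjectivity. The paper phrases it as a single split (if $I^k$ is projective then the last short exact sequence splits, making $\Omega^{-(k-1)}\G$ injective and contradicting $k=\id_\G\G$), while you cascade the splitting through all terms using that $I^0,\ldots,I^k$ are projective; the content is the same.

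There is, however, a genuine gap in your argument for (b). You invoke the equality $\id_\G\G = \id\G_\G$ for Iwanaga--Gorenstein algebras, but that result (Zaks) requires \emph{both} one-sided injective dimensions to be finite a priori, and your hypothesis only gives $\id_\G\G \le n+1$. Deducing $\id\G_\G < \infty$ from $\id_\G\G < \infty$ alone is precisely the (open) Gorenstein symmetry conjecture, so you cannot feed the situation into Zaks' theorem without further input. The paper circumvents this by a direct construction: applying $D$ to the minimal injective coresolution $0\to\G\to I^0\to\cdots\to I^{n+1}\to 0$ produces an exact sequence
\[0\to DI^{n+1}\to DI^n\to\cdots\to DI^0\to D\G\to 0\]
in $\mod\G^{\op}$ with $DI^0,\ldots,DI^n$ projective-injective and $D\G$ injective. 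A counting argument (the original coresolution exhibits $\Omega^{n+1}$ and $\Omega^{-(n+1)}$ as mutually inverse bijections between indecomposable non-projective injectives and indecomposable non-injective projectives) shows that every indecomposable non-injective projective $\G^{\op}$-module occurs in $\add DI^{n+1}$, so the displayed sequence furnishes the required injective coresolution establishing both $\domdim\G^{\op}\ge n+1$ and $\id_{\G^{\op}}\G^{\op}\le n+1$ simultaneously. The paper also remarks that one may instead cite \cite{T} and \cite{AR}, but the relevant input from \cite{AR} is the left-right symmetry of the Auslander condition together with finite injective dimension, not merely Zaks' equality.
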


\begin{proof}
(a) If $\G$ is not selfinjective, then the last term of the minimal
injective coresolution of  $\G$ is not projective.
Thus the assertion follows from the definition of an
$n$-minimal Auslander--Gorenstein algebra.

(b) Although the assertion follows from well-known results
(\cite[Theorem 7.7]{T} and \cite[Corollary 5.5]{AR}), we give a direct argument here.
Assume that $\G$ is $n$-minimal Auslander--Gorenstein.
The minimal injective coresolution
\begin{equation}\label{GI}
0\to\G\to I^0\to\cdots\to I^n\to I^{n+1}\to0
\end{equation}
of $\G$ gives a projective resolution of the injective $\G$-module
$I^{n+1}$.  Thus $\Omega^{n+1}$ and $\Omega^{-(n+1)}$ give mutually
inverse bijections between indecomposable non-projective injective
$\G$-modules and indecomposable non-injective projective $\G$-modules
since their numbers are the same.  In particular, all indecomposable
non-projective injective $\G$-modules appear in $I^{n+1}$ as a direct
summand.  Applying $D$ to \eqref{GI}, we have an exact sequence
\[0\to DI^{n+1}\to DI^n\to\cdots\to DI^0\to D\G\to0\]
of $\G^{\op}$-modules with projective-injective $\G^{\op}$-modules
$DI^i$ with $0\le i\le n$.  Since all indecomposable non-injective
projective $\G^{\op}$-modules appear in $DI^{n+1}$ as a direct
summand, $\G^{\op}$ is also $n$-minimal Auslander--Gorenstein.
\end{proof}

Let us recall the following simple observation.

\begin{lem}[\protect{\cite[Lemma 3]{M}, \cite[$d=m=0$ in Theorem 4.2.1]{I3}}]\label{domdim}
  Let $\L$ be an Artin algebra, $M$ a generator-cogenerator of $\L$,
  $\G=\End_\L(M)$ and $n\ge1$. Then $\domdim\G\ge n+1$ holds if and
  only if $\Ext^i_{\L}(M,M)=0$ for $0<i<n$.
\end{lem}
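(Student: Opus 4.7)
The plan is to apply the functor $F=\Hom_\L(M,-)$ to transfer between minimal injective coresolutions of $M$ in $\mod\L$ and of $\G$ in $\mod\G$. Here I use the classical Morita--Tachikawa fact that, since $M$ is a generator--cogenerator, $F$ restricts to a fully faithful equivalence between $\add M$ and the projective $\G$-modules (sending $M$ to the regular module $\G$), and under this equivalence $\add(D\L)$ corresponds exactly to the projective-injective $\G$-modules; in particular, $F(I)$ is projective-injective for every $I\in\I(\L)$.

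For the ``if'' direction, I would take the minimal injective coresolution $0\to M\to I^0\to I^1\to\cdots$ of $M$ in $\mod\L$ and split it into short exact sequences $0\to\Omega_\L^{-j}(M)\to I^j\to\Omega_\L^{-(j+1)}(M)\to 0$. Applying $F$, the connecting maps of the long exact $\Ext$-sequence are identified with $\Ext^1_\L(M,\Omega_\L^{-j}(M))\simeq\Ext^{j+1}_\L(M,M)$, which vanish for $0\le j\le n-2$ by hypothesis. Patching the resulting short exact sequences then produces an exact sequence
\[0\to\G\to F(I^0)\to F(I^1)\to\cdots\to F(I^{n-1})\to F(I^n)\]
in $\mod\G$ which is exact at $\G,F(I^0),\ldots,F(I^{n-1})$, with each $F(I^i)$ projective-injective. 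Since a minimal injective coresolution of $\G$ is extracted as a direct summand of any injective coresolution, this forces $\domdim\G\ge n+1$.

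For the ``only if'' direction, assume $\domdim\G\ge n+1$, so the first $n+1$ terms $J^0,\ldots,J^n$ of the minimal injective coresolution of $\G$ are projective-injective. By the equivalence above, each $J^i=F(N^i)$ for some $N^i\in\add(D\L)$, and by full faithfulness of $F$ on $\add M$ the differentials together with $\G=F(M)\hookrightarrow F(N^0)$ lift to a complex $M\to N^0\to N^1\to\cdots\to N^n$ in $\mod\L$. Using that $\L\in\add M$ (so that $\Hom_\L(M,-)$ detects whether a module is zero, because $\Hom_\L(\L,-)$ is the identity functor), the exactness of the image complex in $\mod\G$ can be pulled back position by position to give exactness of the lifted complex in $\mod\L$ at $M,N^0,\ldots,N^{n-1}$. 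Extending this partial injective resolution to a full injective coresolution of $M$ and computing $\Ext^i_\L(M,M)$ from it via $\Hom_\L(M,-)$ then yields $\Ext^i_\L(M,M)=0$ for $0<i<n$.

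The main obstacle is the ``only if'' direction: pulling exactness back across the non-exact functor $F$ requires a careful position-by-position argument, and one must verify that the differential $F(N^{n-1})\to F(N^n)$ coming from the lifted complex has the same kernel as the differential into the next term of the abstract injective coresolution, so that vanishing of the cohomology of $\Hom_\L(M,-)$ applied to an injective coresolution of $M$ really encodes $\Ext^{n-1}_\L(M,M)=0$ in the correct position rather than being shifted by one.
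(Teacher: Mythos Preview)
The paper does not prove this lemma; it is merely cited as \cite[Lemma 3]{M} and \cite[Theorem 4.2.1]{I3}, so there is no in-paper argument to compare your proposal against.

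Your proposal is correct in both directions. The ``if'' direction is clean as written. For the ``only if'' direction, your key observation that $\Lambda\in\add M$ is exactly the right tool, and it gives something stronger than merely ``$F$ detects zero objects'': since $\Hom_\Lambda(\Lambda,-)$ is the identity functor and is a \emph{direct summand} of $F=\Hom_\Lambda(M,-)$ as an additive functor to abelian groups, the lifted complex $M\to N^0\to\cdots\to N^n$ is a direct summand, as a complex of abelian groups, of its own $F$-image $\Gamma\to J^0\to\cdots\to J^n$. Exactness of the latter at $\Gamma,J^0,\ldots,J^{n-1}$ therefore passes to the summand, giving exactness of the lifted complex at $M,N^0,\ldots,N^{n-1}$ in one stroke; no delicate position-by-position chase is needed.

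Your stated obstacle about the boundary position $n-1$ then dissolves. By construction, the map $N^{n-1}\to N^n$ was chosen so that $F(N^{n-1}\to N^n)=J^{n-1}\to J^n$; hence when you extend the partial injective coresolution of $M$ beyond degree $n$ (by taking the cokernel at $N^n$ and resolving), the computation of $\Ext^i_\Lambda(M,M)$ for $0<i\le n-1$ only involves the maps $N^{i-1}\to N^i\to N^{i+1}$ with $i+1\le n$, all of which lie in the lifted complex. Applying $F$ returns the exact segment $J^{i-1}\to J^i\to J^{i+1}$ of the minimal injective coresolution of $\Gamma$, so the cohomology vanishes.
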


The following result shows that a finite $n$-precluster tilting
subcategory gives rise to an $n$-minimal Auslander--Gorenstein
algebra. 

\begin{prop}\label{prop:endoring}
  Let $\L$ be an Artin algebra and $M$ an $n$-precluster tilting
  $\L$-module with $n\geq1$.
  \begin{enumerate}[\rm(a)] 
  \item $\G=\End_\L(M)$ is an
  $n$-minimal Auslander--Gorenstein algebra and the $\G$-module
  $I={}_{\G}M$ satisfies $\P(\G)\cap\I(\G)=\add I$ and
  $\L\simeq\End_\G(I)$.
  \item $M$ is an $F_M$-cotilting $\L$-module with $\id_{F_M}M\leq n-1$.
  \item $M$ is a projective $\L$-module if
  and only if $\G$ is selfinjective.
  \end{enumerate}
\end{prop}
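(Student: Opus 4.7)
The plan is to prove (b) first, then bootstrap (a) from Theorem \ref{thm:relativecotilting}, and deduce (c) from the proj-injective description in (a).

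For (b), I check the three axioms of being $F_M$-cotilting. Condition (i), $\id_{F_M} M\le n-1$, is immediate from Theorem \ref{prop:Dperpfunctfinite}(b) applied to $\C=\add M$. Condition (ii), $\Ext^i_{F_M}(M,M)=0$ for $i>0$, is trivial because $M\in\P(F_M)=\add M$ by (\ref{P(FX) and I(FX)}), so $\pd_{F_M}M=0$. The main obstacle is (iii): for each $I\in\I(F_M)=\add\{\tau M,\I(\L)\}$ I must produce a finite $F_M$-exact resolution by $\add M$; the injective $\L$-modules already lie in $\add M$ by the cogenerator condition, so the essential case is $I=\tau M$. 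My approach is to exploit $\tau_n$-stability through the truncated minimal projective resolution
\[
0\to \Omega^{n-1}M\to P_{n-2}\to\cdots\to P_0\to M\to 0.
\]
The selforthogonality $\Ext^j_\L(M,M)=0$ for $0<j<n$ ensures that each short factor $0\to\Omega^{i+1}M\to P_i\to \Omega^i M\to 0$ with $0\le i\le n-2$ is $F^M$-exact. Applying Proposition \ref{lem1.1}(a), which converts $F^M$-exactness into $F_M$-exactness under $\tau$, and using $\tau P_i=0$, I obtain $F_M$-exact sequences
\[
0\to \tau_{i+2}(M)\to I_i\to \tau_{i+1}(M)\to 0
\]
with $I_i\in\I(\L)\subseteq\add M$ (via $\tau\Omega^{j}=\tau_{j+1}$). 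Splicing these for $i=0,\ldots,n-2$ yields an $F_M$-exact resolution
\[
0\to \tau_n(M)\to I_{n-2}\to\cdots\to I_0\to\tau M\to 0
\]
of length $n-1$, whose leftmost term lies in $\add M$ by the $\tau_n$-stability hypothesis.

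With (b) in hand, (a) is a bootstrap from Theorem \ref{thm:relativecotilting} applied with $X=T=M$ and $F=F_M$. Part (a) of that theorem gives $\L\simeq\End_\G(M)=\End_\G(I)$, and part (d) gives that $U=\Hom_\L(M,M)=\G$ is a cotilting $\G$-module with $\id_\G\G\le\id_{F_M}M+2\le n+1$. Combined with $\domdim\G\ge n+1$ from Lemma \ref{domdim} (applied using $\Ext^i_\L(M,M)=0$ for $0<i<n$), this yields that $\G$ is $n$-minimal Auslander--Gorenstein. For the identification $\P(\G)\cap\I(\G)=\add I$, note that $I={}_\G M$ splits off from $\Hom_\L(M,M)=\G$ as a left $\G$-summand (because $\L\in\add M$) and is therefore projective; by the dual argument using $D\L\in\add M$ it is also injective; and $\domdim\G\ge 2$ forces every projective-injective $\G$-module to lie in $\add I$ by the standard Auslander correspondence.

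For (c), if $M$ is projective then $\L\in\add M\subseteq \P(\L)$ yields $\add M=\add\L$, so the cogenerator condition $D\L\in\add M=\add\L$ makes $\L$ selfinjective, and $\G=\End_\L(M)$ is then Morita equivalent to $\L$, hence selfinjective. Conversely, if $\G$ is selfinjective then $\P(\G)=\I(\G)$, so $\add I=\P(\G)\cap\I(\G)=\add\G$; under the equivalence $\Hom_\L(-,M)\colon(\add M)^{\op}\simeq\P(\G)$ sending $\L\mapsto I$ and $M\mapsto\G$, the equality $\add\G=\add I$ on the $\G$-side pulls back to $\add M=\add\L$ on the $\L$-side, so $M$ is projective.
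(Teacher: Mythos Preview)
Your proof is correct, but it establishes the $F_M$-cotilting property of $M$ by a different (dual) route than the paper.

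The paper handles (b) in one stroke via the \emph{injective coresolution} of $M$: the sequence
\[
0\to M\to I^0\to\cdots\to I^{n-2}\to\Omega_\L^{-(n-1)}(M)\to 0
\]
is $F_M$-exact by selforthogonality, and since $\tau_n^-(M)\in\add M$ forces $\Omega_\L^{-(n-1)}(M)\in\I(F_M)=\add\{\tau M,D\L\}$ (via Proposition~\ref{lem:preclustercategory}(a)), this single sequence simultaneously yields $\id_{F_M}M\le n-1$ and serves as an $\add M$-resolution of $\tau M$. You instead obtain $\id_{F_M}M\le n-1$ by quoting Theorem~\ref{prop:Dperpfunctfinite}(b), and for condition (iii) you start from the \emph{projective resolution} of $M$ and push it through $\tau$ using Proposition~\ref{lem1.1}(a), landing on an $\add M$-resolution of $\tau M$ whose leftmost term is $\tau_n(M)\in\add M$. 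In effect the paper exploits the hypothesis $\tau_n^-(M)\in\add M$ while you exploit the dual hypothesis $\tau_n(M)\in\add M$; both are available from Definition~\ref{defin:n-precluster}(ii). The paper's argument is more self-contained, whereas yours invokes the Auslander--Buchweitz machinery behind Theorem~\ref{prop:Dperpfunctfinite} and the $\tau$-transfer of Proposition~\ref{lem1.1}; on the other hand your route makes the role of $\tau_n$-stability in condition (iii) very transparent.

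For (a) and (c) your argument matches the paper's. One small point: your justification that every projective-injective $\G$-module lies in $\add I$ is stated tersely; the paper makes this explicit by noting that a projective cover $P\to M$ induces an embedding $0\to\G\to\Hom_\L(P,M)$ with $\Hom_\L(P,M)\in\add I$, so the injective envelope of $\G$ lies in $\add I$.
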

\begin{proof}
  By Lemma \ref{domdim} we know that $\domdim \G \geq n+1$.  Let
  $G=F_{ M}\subseteq \Ext^1_\L(-,-)$. Then clearly
  $\Ext^i_G(M,M)=0$ for all $i>0$.  
Since $\tau^-_n(M)\in\add M$ by our assumption, we have
\begin{equation}\label{describe I(G)}
\I(G)=\add\{\tau(M),D\L\}=\add\{\Omega_\L^{-(n-1)}(M),D\L\}
\end{equation}
by Proposition \ref{lem:preclustercategory}(a).
The start of the minimal injective coresolution
\[0\to M\to I^0\to I^1\to \cdots \to I^{n-2}\to
\Omega^{-(n-1)}_\L(M)\to 0\] 
of $M$ is $G$-exact since $\Ext^i_\L(M,M)=0$ holds for every $0<i<n$.  It follows from
\eqref{describe I(G)} that $\id_G M\leq n-1$ and
$\I(G)\subseteq\widehat{\add M}$.  Therefore $M$ is a $G$-cotilting
module.  By Theorem \ref{thm:relativecotilting}(d), $\Hom_\L(M,M)=\G$
  is a cotilting $\G$-module with $\id {_\G \G}\leq \id_G M + 2
\leq n + 1$.  Thus $\G$ is an $n$-minimal Auslander--Gorenstein algebra.

Since $I=\Hom_{\L}(\L,M)$ belongs to $\P(\G)$ and
$DI=\Hom_{\L}(M,D\L)$ belongs to $\P(\G^{\op})$, we have
$I\in\P(\G)\cap\I(\G)$.  Taking a projective cover $P\to M\to0$ in
$\mod\L$ and applying $\Hom_\L(-,M)$, we have an exact sequence
$0\to\G\to\Hom_\L(P,M)$ in $\mod\G$ with $\Hom_\L(P,M)\in\add I$.
Thus $I$ is an additive generator of $\P(\G)\cap\I(\G)$.  In addition,
we have $\L\simeq\End_\G(I)$ from Theorem
\ref{thm:relativecotilting}(a) since $I = M$ is a $G$-cotilting
module.

The last claim is easy and left to the reader.  This completes the proof. 
\end{proof}

Next we show the converse, namely $n$-minimal Auslander--Gorenstein algebras
$\G$ give rise to a finite $n$-precluster tilting subcategory.

\begin{prop}\label{prop:redtoendoring}
  Let $\G$ be an $n$-minimal Auslander--Gorenstein algebra for $n\geq 1$.
  Let $I$ be an additive generator of $\P(\G)\cap\I(\G)$,
  $\L=\End_\G(I)$ and $M={_\L I}$.
  \begin{enumerate}[\rm(a)]
  \item $M$ is an $n$-precluster
  tilting $\L$-module such that $\End_\L(M)\simeq\G$.  
  \item $M$ is an $F_M$-cotilting $\L$-module with $\id_{F_M}M\leq n-1$.
  \item $\G$
  is selfinjective if and only if $M$ is a projective $\L$-module.
  \end{enumerate}
\end{prop}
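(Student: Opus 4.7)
The plan is to derive everything from Theorem \ref{thm:dualizingsummand} applied to the cotilting $\G$-module $U=\G$ (which is cotilting over itself because $\G$ is Gorenstein with $\id_\G\G\le n+1$) and the candidate dualizing summand $T=I$. To see that $I$ really is a dualizing summand, note that the beginning of the minimal injective coresolution of $\G$ gives, thanks to $\domdim\G\ge n+1\ge 2$, an exact sequence $0\to \G\to I^0\to I^1$ with $I^0,I^1\in\add I$ in which $\G\hookrightarrow I^0$ is a left $(\add I)$-approximation because it is the injective envelope of $\G$. Theorem \ref{thm:dualizingsummand} then immediately yields $\G\simeq\End_\L(I)=\End_\L(M)$ and, since $I$ is additionally $\G$-injective, shows that $M$ is $F_M$-cotilting with $\id_{F_M}M\le \max\{\id_\G\G-2,0\}\le n-1$. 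This settles (b) and the endomorphism statement of (a).

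For the rest of (a), I will verify the four axioms of Definition \ref{defin:n-precluster} for $M$. Axiom (iv) is automatic. Axiom (iii) is exactly the converse direction of Lemma \ref{domdim} applied to $\domdim\G\ge n+1$. For axiom (i), the $F_M$-cotilting property together with \cite[Corollary 3.14]{ASII} forces $\L\in\add M$, since $\P(F_M)=\add\{M,\L\}$ must contain the same number of nonisomorphic indecomposables as $\add M$. Dually, the third clause of the definition of $F_M$-cotilting, applied to the $F_M$-injective $D\L$, provides an epimorphism $T_0\twoheadrightarrow D\L$ with $T_0\in\add M$; since $D\L$ is injective this epimorphism splits and $D\L\in\add M$.

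The main obstacle is axiom (ii). For $\tau_n(M)$, I plan to compute it explicitly by using that $I\in\P(\G)\cap\I(\G)$ makes $\Hom_\G(I,-)$ exact. Applying $\Hom_\G(-,I)$ to the minimal injective coresolution $0\to\G\to I^0\to\cdots\to I^n\to I^{n+1}\to 0$ produces a projective resolution
\[0\to\Hom_\G(I^{n+1},I)\to\Hom_\G(I^n,I)\to\cdots\to\Hom_\G(I^0,I)\to M\to 0\]
in $\mod\L$ with $P_i:=\Hom_\G(I^i,I)\in\add\L$ for $0\le i\le n$. The tail yields a projective presentation $P_n\to P_{n-1}\to \Omega^{n-1}_\L(M)\to 0$. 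Applying $\Hom_\L(-,\L)$ and using the Morita-type identification $\Hom_\L(\Hom_\G(I^i,I),\L)\simeq\Hom_\G(I,I^i)$ (valid because $I\in\add\G$), combined with exactness of $\Hom_\G(I,-)$ on $I^{n-1}\to I^n\to I^{n+1}\to 0$, gives $\Tr\Omega^{n-1}_\L(M)\simeq\Hom_\G(I,I^{n+1})$. Writing $I^{n+1}=DQ$ with $Q=DI^{n+1}$ a projective right $\G$-module and invoking Hom-tensor adjunction converts this to $D(Q\otimes_\G I)$, whence $\tau_n(M)=D\Tr\Omega^{n-1}_\L(M)\simeq Q\otimes_\G I$. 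Since $Q$ is a summand of $\G^k$ as right $\G$-module, $\tau_n(M)$ is a summand of $I^k$, so $\tau_n(M)\in\add M$. The analogous statement $\tau_n^-(M)\in\add M$ follows symmetrically by applying the same computation to $\G^{\op}$, which is $n$-minimal Auslander--Gorenstein by Proposition \ref{basic properties of minimal AG}(b).

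Finally, for (c), I use the restricted duality $\Hom_\L(-,M)\colon\add M\simeq\add\G$ coming from \cite[Theorem 3.13(e)]{ASII}, under which $M\leftrightarrow\G$ and $\L\leftrightarrow I$. Consequently, $M\in\add\L$ (i.e.\ $M$ is projective over $\L$) is equivalent to $\G\in\add I$, which says $\P(\G)\subseteq\I(\G)$, i.e.\ that $\G$ is selfinjective. The delicate part of the whole argument is the explicit computation of $\tau_n(M)$ in the previous paragraph, where one must keep careful track of the left/right $\L$-structures to justify both the Morita identification $\Hom_\L(\Hom_\G(I^i,I),\L)\simeq\Hom_\G(I,I^i)$ and the Hom-tensor adjunction producing $Q\otimes_\G I$.
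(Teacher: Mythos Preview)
There is a gap in your verification that $M$ is a cogenerator. You assert that the epimorphism $T_0\twoheadrightarrow D\L$ coming from clause (iii) of the $F_M$-cotilting definition splits ``since $D\L$ is injective,'' but injectivity of $D\L$ only guarantees that monomorphisms \emph{from} $D\L$ split; splitting of epimorphisms \emph{onto} $D\L$ would require $D\L$ to be projective. The paper handles this point directly without appealing to the cotilting property: since $I$ is injective as a $\G$-module one has $I\in\add D(\G_\G)$, so the $\L^{\op}$-module $\L=\End_\G(I)$ lies in $\add\Hom_\G(I,D\G)=\add D(M)$, giving $D\L\in\add M$. (Your counting argument for $\L\in\add M$ via \cite[Corollary 3.14]{ASII} is fine; the paper instead observes that $I\in\add\G$ implies $\L=\End_\G(I)\in\add\Hom_\G(\G,I)=\add M$.)

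Apart from this, your argument is correct, and your treatment of axiom (ii) is genuinely different from the paper's. You compute $\tau_n(M)$ explicitly: applying $\Hom_\G(-,I)$ to the injective coresolution of $\G$ yields a projective resolution of $M$, and tracing through $(-)^*$ and $\Hom_\G(I,-)$ identifies $\Tr\Omega^{n-1}_\L(M)$ with $\Hom_\G(I,I^{n+1})\simeq D(Q\otimes_\G I)$, whence $\tau_n(M)\simeq Q\otimes_\G I\in\add M$. The paper instead stays on the $\L$-side: since $\id_{F_M}M\le n-1$ and $\Ext^i_\L(M,M)=0$ for $0<i<n$, the first $n-1$ steps of the minimal injective coresolution of $M$ are $F_M$-exact, forcing $\Omega_\L^{-(n-1)}(M)\in\I(F_M)=\add\{\tau M,D\L\}$ and hence $\tau_n^-(M)\in\add M$; the other inclusion then follows (as in your argument) by passing to $\G^{\op}$. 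Your route is more concrete but requires the careful left/right bookkeeping you flag; the paper's route never leaves the relative-homological framework and avoids that bookkeeping entirely.
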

\begin{proof}
We can assume that $I$ is basic.
Since $\domdim\G\ge n+1\ge2$, the module $I$ is a dualizing
summand of the cotilting $\G$-module $\G$.  Thus the claim
$\G\simeq \End_\L(M)$ follows directly from Theorem
\ref{thm:dualizingsummand}(a).

In the rest, we show that $M$ is an $n$-precluster tilting $\L$-module.

(i) We show that $M$ is a generator-cogenerator for $\L$. 

Since the $\G$-module $I$ belongs to $\add\G$,
the $\L$-module $\L=\End_{\G}(I)$ belongs to $\add\Hom_\G(\G,I)=\add M$.
Thus $M$ is a generator of $\L$. Since the $\G$-module $I$ belongs to $\add D(\G_\G)$,
the $\L^{\op}$-module $\L=\End_{\G}(I)$ belongs to $\add\Hom_\G(I,D(\G))=\add D(M)$.
Thus $M$ is a cogenerator of $\L$. 

(ii) It follows from Lemma \ref{domdim} that $\Ext_\L^i(M,M)=0$ for $0<i<n$.

(iii) It remains to show that both $\tau^-_{n}(M)$ and $\tau_{n}(M)$ are in $\add M$.

Let $G=F_{M}\subseteq \Ext_\L^1(-,-)$. Then by Theorem
\ref{thm:dualizingsummand} we infer that $M$ is a $G$-cotilting module
with $\id_G M\leq \max\{\id {_\G \G}-2,0\}\leq n-1$.  By (ii),
the injective coresolution
\[0\to M\to I^0\to I^1\to \cdots \to I^{n-2}\to
\Omega_\L^{-(n-1)}(M)\to 0\] 
of the $\L$-module $M$ is $G$-exact, and gives the start of a
$G$-injective coresolution. Since $\id_G M\le n-1$, the module
$\Omega_\L^{-(n-1)}(M)$ is in $\I(G)=\add\{D\L, \tau(M)\}$.  Hence
$\tau_{n}^-(M)=\tau^-(\Omega_\L^{-(n-1)}(M))$ is in $\add M$.

On the other hand, since $\G^{\op}$ is an $n$-minimal Auslander--Gorenstein
algebra such that $\P(\G^{\op})\cap\I(\G^{\op})=\add DI$ and
$\End_{\G^{\op}}(DI)=\L^{\op}$, the $\L^{\op}$-module $\tau_{n}^-(DM)$
is in $\add DM$ by the same argument.  Thus the $\L$-module
$\tau_{n}(M)$ is in $\add {_\L M}$.  This completes the proof that $M$
is an $n$-precluster tilting $\L$-module.

The last claim is easy and left to the reader. This completes the proof.
\end{proof}

Now we address the bijectivity of the correspondence.

\begin{thm}\label{AS correspondence}
  Fix $n\geq 1$.  There is a bijection between Morita-equivalence
  classes of $n$-minimal Auslander--Gorenstein algebras and equivalence classes
  of finite $n$-precluster tilting subcategories $\C$ of Artin
  algebras, where the correspondences are given in
  \textup{Propositions \ref{prop:endoring}} and
  \textup{\ref{prop:redtoendoring}}.
\end{thm}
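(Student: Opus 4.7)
The plan is simply to assemble Propositions \ref{prop:endoring} and \ref{prop:redtoendoring}, which already perform the constructions in each direction, and verify that they descend to mutually inverse maps on the indicated equivalence classes.

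First I would fix the equivalence relations precisely. An equivalence between two pairs $(\L,\C)$ and $(\L',\C')$ of Artin algebras with finite $n$-precluster tilting subcategories means an algebra isomorphism $\L\simeq\L'$ identifying $\C$ with $\C'$ (equivalently, identifying basic additive generators up to isomorphism); on the other side we take Morita equivalence of Artin algebras.

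Next I would check well-definedness. In the forward direction, $(\L,\C=\add M)\mapsto\G=\End_\L(M)$: Proposition \ref{prop:endoring}(a) guarantees $\G$ is $n$-minimal Auslander--Gorenstein, and equivalent pairs produce isomorphic endomorphism algebras. In the backward direction, $\G\mapsto(\L,\add M)$ where $\L=\End_\G(I)$ and $M={}_\L I$ for an additive generator $I$ of $\P(\G)\cap\I(\G)$: Proposition \ref{prop:redtoendoring}(a) guarantees $M$ is $n$-precluster tilting. Any two additive generators of $\P(\G)\cap\I(\G)$ differ by multiplicities only, yielding Morita-equivalent $\L$'s carrying equivalent $n$-precluster tilting subcategories; and a Morita equivalence $\G\simeq\G'$ transports these constructions to equivalent pairs.

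Finally I would verify the roundtrips. Starting from $(\L,\add M)$ and forming $\G=\End_\L(M)$, Proposition \ref{prop:endoring}(a) asserts that ${}_\G M$ is an additive generator of $\P(\G)\cap\I(\G)$ and that $\End_\G({}_\G M)\simeq\L$, so applying the backward construction recovers the original pair up to equivalence. Conversely, starting from $\G$ and forming $(\L,M)$ as above, Proposition \ref{prop:redtoendoring}(a) gives $\End_\L(M)\simeq\G$, recovering $\G$ up to Morita equivalence. The only genuine work has already been done inside Propositions \ref{prop:endoring} and \ref{prop:redtoendoring}, where relative cotilting theory (Theorems \ref{thm:relativecotilting} and \ref{thm:dualizingsummand}) was used to match the $n$-precluster tilting data on $\L$ with the projective-injective summand $I$ of $\G$; beyond that, the remaining content of the theorem is bookkeeping of equivalences.
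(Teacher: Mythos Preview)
Your proposal is correct and follows exactly the paper's approach: the paper's proof is the single line ``The assertions follow from Propositions \ref{prop:endoring} and \ref{prop:redtoendoring},'' and you have simply unpacked the bookkeeping of well-definedness and the two roundtrips using precisely the isomorphisms $\L\simeq\End_\G(I)$ and $\G\simeq\End_\L(M)$ recorded in those propositions.
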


\begin{proof}
  The assertions follow from Propositions \ref{prop:endoring} and
  \ref{prop:redtoendoring}.
\end{proof}

\begin{remark}
Note that the bijection for $n$-Auslander algebras given in \cite{I3} is
dual to Theorem \ref{AS correspondence}.
\end{remark}

The category $\Z(\C)$ associated to a finite $n$-precluster tilting subcategory
$\C$ has the following interpretation in terms of
the corresponding $n$-minimal Auslander--Gorenstein algebra.

\begin{thm}\label{CM duality}
Given an Artin algebra $\L$ with a finite $n$-precluster
  tilting subcategory $\C=\add M$,  let $\G=\End_\L(M)$ be the
  corresponding $n$-minimal Auslander--Gorenstein algebra.  Then
  $\Z(\C)$ and $\CM\G$ are dual categories via the functors 
  $\Hom_\L(-,M)\colon\Z(\C)\to\CM\G$ and $\Hom_\G(-,M)\colon\CM\G\to\Z(\C)$.
Moreover they induce triangle equivalences between $\U(\C)$ and
$(\underline{\CM}\,\G)^{\op}$. 
\end{thm}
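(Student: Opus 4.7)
My approach is to reduce the theorem to the relative cotilting duality assembled in Section~\ref{section:2}. By Proposition~\ref{prop:endoring}(b) the module $M$ is an $F_M$-cotilting $\L$-module with $\id_{F_M} M \le n-1$, so I would specialize Theorem~\ref{thm:relativecotilting} by taking $X=T=M$. The cotilting $\G$-module produced is then $U=\Hom_\L(M,M)=\G$ itself, and part~(e) of the theorem delivers quasi-inverse dualities
\[
\Hom_\L(-,M)\colon {^{\perp_{F_M}}M} \longleftrightarrow {^\perp\G} \,\colon \Hom_\G(-,M).
\]

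The crucial step is then to identify ${^{\perp_{F_M}}M}$ with $\Z(\C)$ and ${^\perp\G}$ with $\CM\G$. The latter identification is the definition of $\CM\G$. For the former, the bound $\id_{F_M}M\le n-1$ makes the vanishing $\Ext^i_{F_M}(-,M)=0$ automatic for $i\ge n$, so ${^{\perp_{F_M}}M}$ is cut out by $\Ext^i_{F_M}(-,M)=0$ for $0<i<n$. Since $\C=\add M$ is selforthogonal in the interval $[1,n-1]$, Proposition~\ref{prop:relative=absolute}(b) yields $\Ext^i_{F_M}(-,M)=\Ext^i_\L(-,M)$ in this range, and consequently ${^{\perp_{F_M}}M}={^{\perp_{n-1}}\C}=\Z(\C)$. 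Combined with the cotilting dualities above, this establishes the duality between $\Z(\C)$ and $\CM\G$.

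To promote this to a triangle equivalence of stable categories, I would verify that $\Hom_\L(-,M)$ is an anti-equivalence of the two Frobenius structures. By Proposition~\ref{Z(C) is Frobenius}, $\Z(\C)$ is Frobenius with projective-injectives $\C$, while $\CM\G$ is Frobenius with projective-injectives $\add\G$. These two classes exchange under the duality: $\Hom_\L(\C,M)\subseteq\add\Hom_\L(M,M)=\add\G$ and $\Hom_\G(\add\G,M)\subseteq\add\Hom_\G(\G,M)=\add M=\C$. To see that short exact sequences correspond, I would invoke the $i=1$ case of Theorem~\ref{thm:relativecotilting}(c), which supplies a natural isomorphism $\Ext^1_{F_M}(C,A)\simeq\Ext^1_\G(\Hom_\L(A,M),\Hom_\L(C,M))$ for $A,C\in\Z(\C)$ and shows that distinguished $F_M$-exact sequences in $\Z(\C)$ map bijectively to short exact sequences in $\CM\G$; Lemma~\ref{FC=FC} additionally reassures that for $n\ge 2$ the $F_M$-exact structure on $\Z(\C)$ agrees with the ambient one. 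A contravariant equivalence of Frobenius categories which exchanges the projective-injective objects induces an anti-equivalence of the triangulated stable categories, yielding $\U(\C)\simeq(\underline{\CM}\,\G)^{\op}$.

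The main obstacle is the identification ${^{\perp_{F_M}}M}=\Z(\C)$ in the second paragraph, since it bridges the intrinsic $\Ext^\bullet_\L$-description of $\Z(\C)$ used in Sections~\ref{section:3}--\ref{section:4} with the relative $\Ext^\bullet_{F_M}$-language required by the cotilting theorem; both the selforthogonality of $\C$ in $[1,n-1]$ and the cotilting bound $\id_{F_M}M\le n-1$ enter here essentially. Once this identification is secured, the duality and its triangulated enhancement follow formally from the machinery of Section~\ref{section:2} together with the Frobenius description of $\Z(\C)$ established in Section~\ref{section:3}.
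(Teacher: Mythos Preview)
Your proposal is correct and follows essentially the same route as the paper: both invoke Proposition~\ref{prop:endoring}(b) to obtain that $M$ is $F_M$-cotilting with $\id_{F_M}M\le n-1$, apply Theorem~\ref{thm:relativecotilting}(e) with $X=T=M$ so that $U=\G$, and then identify ${}^{\perp_{F_M}}M=\Z(\C)$ using Proposition~\ref{prop:relative=absolute}(b) together with the cotilting bound. Your treatment of the triangle equivalence is more explicit than the paper's (which simply notes that $\C$ corresponds to $\P(\G)$ and declares the rest immediate), but the underlying argument is the same.
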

\begin{proof}
  Let $G=F_M$.  Then $M$ is a $G$-cotilting module with $\id_G M \le
  n-1$ by the proof of Proposition \ref{prop:endoring}(b), and we have a
  duality
  \[\Hom_\L(-,M)\colon {^{\perp_{G}} M} \to {^\perp \G}=\CM\G\] 
  by Theorem \ref{thm:relativecotilting}(e).  Let us prove
  $\Z(\C)={^{\perp_{G}} M}$. Since $\id_G M \le n-1$, it follows that
  $\Ext^i_G(-,M)=0$ for all $i\geq n$.  Furthermore, since $M$ is
  in $\Z(\C)$, we have that $\Ext_G^i(-,M)=\Ext^i_\L(-,M)$ for $0<i<n$
  by Proposition \ref{prop:relative=absolute}(b). Thus
  $\Z(\C)={^{\perp_{G}} M}$ holds.  Since $\C=\add M$ corresponds to
  $\P(\G)$ via the duality $\Hom_\L(-,M)$, the last claim follows
  immediately.
\end{proof}

The class of $n$-Auslander algebras were introduced in \cite{I3} as
the Artin algebras $\G$ with $\domdim\G \geq n + 1 \geq
\gldim\G$. Now we characterize this subclass of algebras within the class of
$n$-minimal Auslander--Gorenstein algebras. 
\begin{thm}
  Let $\L$ be an Artin algebra, $\C=\add M$ a finite $n$-precluster tilting $\L$-module
  with $n\ge1$ and $\G=\End_{\L}(M)$.
  Then the following are equivalent.
\begin{enumerate}[\rm(a)]
\item  $\G$ is an $n$-Auslander algebra. 
\item  $\gldim \G < \infty$. 
\item  $\CM\G = \P(\G)$.
\item  $\Z(\C) = \C$. 
\end{enumerate}
\end{thm}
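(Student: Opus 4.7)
The plan is to establish the four equivalences in pairs: (a)$\Leftrightarrow$(b) is purely about global dimension; (b)$\Leftrightarrow$(c) is the standard Gorenstein dichotomy; and (c)$\Leftrightarrow$(d) is a direct consequence of Theorem \ref{CM duality}. All three pieces use material already developed, and none of them looks genuinely obstructive.

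For (a)$\Leftrightarrow$(b), I would first recall from Proposition \ref{prop:endoring} that $\G$ is $n$-minimal Auslander--Gorenstein, so $\domdim\G\ge n+1\ge\id_\G\G$. Thus (a) reduces to the additional bound $\gldim\G\le n+1$. The implication (a) $\Rightarrow$ (b) is then immediate, while (b) $\Rightarrow$ (a) follows from the classical fact that for an Iwanaga--Gorenstein Artin algebra of finite global dimension one has $\gldim\G=\id_\G\G$; combined with $\id_\G\G\le n+1$ and $\domdim\G\ge n+1$, this gives that $\G$ is an $n$-Auslander algebra.

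For (b)$\Leftrightarrow$(c), I would invoke the standard description of $\CM\G$ for Iwanaga--Gorenstein algebras. Given (b), any $X\in\CM\G$ has finite projective dimension, and the vanishing $\Ext^i_\G(X,\G)=0$ for $i>0$ forces $X\in\P(\G)$ by dimension-shifting. Conversely, given (c), for any $N\in\mod\G$ the syzygy $\Omega^{n+1}_\G(N)$ lies in $\CM\G$ (since $\id_\G\G\le n+1$ guarantees $\Ext^i_\G(\Omega^{n+1}_\G N,\G)=\Ext^{i+n+1}_\G(N,\G)=0$ for all $i>0$); by hypothesis it is therefore projective, which bounds $\pd_\G(N)\le n+1$ and yields (b).

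For (c)$\Leftrightarrow$(d), Theorem \ref{CM duality} supplies a duality $\Hom_\L(-,M)\colon\Z(\C)\to\CM\G$ under which $\C=\add M$ corresponds to $\add\G=\P(\G)$, so $\Z(\C)=\C$ is tautologically equivalent to $\CM\G=\P(\G)$. The main obstacle across the whole argument is merely keeping the bookkeeping straight in the Gorenstein step (b)$\Leftrightarrow$(c), where one must use the bound $\id_\G\G\le n+1$ in both directions; the rest is a formal assembly of the earlier results.
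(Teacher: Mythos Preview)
Your proof is correct and follows essentially the same route as the paper: the paper argues the cycle (a)$\Rightarrow$(b)$\Rightarrow$(c)$\Rightarrow$(a) together with (c)$\Leftrightarrow$(d), using exactly your syzygy argument $\Omega^{n+1}_\G(N)\in\CM\G$ for (c)$\Rightarrow$(a) and the duality of Theorem~\ref{CM duality} for (c)$\Leftrightarrow$(d). The only cosmetic difference is that you close the loop via (b)$\Rightarrow$(a) using $\gldim\G=\id_\G\G$ for Gorenstein algebras of finite global dimension, whereas the paper absorbs this into the (c)$\Rightarrow$(a) step.
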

\begin{proof}
(a) implies (b): This is obvious.

(b) implies (c): If $X$ in $\CM\G$ is non-projective, then
$\Ext^m_\G(X,\G)\neq0$ holds for $m:=\pd_\G X>0$, a contradiction.

(c) implies (a): For any $X$ in $\mod\G$, $\Omega^{n+1}_\G(X)$ belongs
to $\CM\G=\P(\G)$ by using dimension shifting and $\id_\G\G\le n+1$ in Proposition \ref{prop:endoring}(a).
Thus $\gldim\G\le n+1$.

(c) is equivalent to (d): This follows from Theorem \ref{CM duality}.
\end{proof}

For a general Artin Gorenstein algebra $\G$, the category of
maximal Cohen--Macaulay modules $\CM\G={^\perp \G}$ is an extension closed
functorially finite subcategory of $\mod\G$ \cite{AB}. Therefore the category
$\CM\G$ has minimal left (respectively right) almost split maps and
almost split sequences.  We denote by $\tau_{\CM\G}$ the
Auslander--Reiten translation in $\CM\G$.

We end this section with the following easy observations, which
compare almost split sequences in $\Z(\C)$ with those in $\CM\G$.

\begin{prop}
Let $\L$ be an Artin algebra, $\C=\add M$ a finite $n$-precluster tilting $\L$-module with $n\ge1$ and $\G=\End_\L(M)$.
\begin{enumerate}[\rm(a)]
\item A morphism $f\colon A\to B$ is (minimal) left almost split
  (respectively (minimal) right almost split) in $\CM\G$ if and only
  if $\Hom_\G(f,M)\colon \Hom_\G(B,M)\to \Hom_\G(A,M)$ is (minimal)
  right almost split (respectively (minimal) left almost split) in
  $\Z(\C)$.
\item An exact sequence $0\to A\to B\to C\to 0$ in $\CM\G$ is almost
  split in $\CM\G$ if and only if $0\to \Hom_\G(C,M)\to
  \Hom_\G(B,M)\to \Hom_\G(A,M)\to 0$ is almost split in $\Z(\C)$.
\item $\tau^-_{\Z(\C)}(\Hom_\G(C,M))\simeq \Hom_\G(\tau_{\CM\G}(C),M)$
  holds for every indecomposable module $C$ in $\CM\G\setminus\P(\G)$,
  and $\tau^{-}_{\CM\G}(\Hom_\L(X,M))\simeq
  \Hom_\L(\tau_{\Z(\C)}(X),M)$ holds for every indecomposable module $X$
  in $\Z(\C)\setminus\C$.
\end{enumerate}
\end{prop}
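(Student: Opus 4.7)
The strategy is to transport all three parts of the proposition along the contravariant equivalence
\[\Hom_\G(-,M)\colon\CM\G\to\Z(\C),\qquad \Hom_\L(-,M)\colon\Z(\C)\to\CM\G\]
of Theorem \ref{CM duality}. A crucial auxiliary fact used throughout is that this is an \emph{exact} equivalence of Frobenius categories: $\Hom_\G(-,M)$ is exact on all of $\mod\G$ because the $\G$-module $M={}_\G I$ is projective by Proposition \ref{prop:endoring}(a); dually, $\Hom_\L(-,M)$ sends $F_\C$-exact sequences in $\Z(\C)$ to short exact sequences in $\CM\G$, either by using the vanishing $\Ext^i_\L(\Z(\C),M)=0$ for $0<i<n$ together with $F_\C$-exactness for $n=1$, or simply because the quasi-inverse of an exact equivalence between Frobenius categories is exact. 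Under this duality the class $\C=\add M$ of projective-injective objects in $\Z(\C)$ is identified with $\P(\G)$, the projective-injective objects of $\CM\G$.

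For (a), I would invoke the standard categorical principle that any contravariant equivalence of additive categories interchanges (minimal) left almost split and (minimal) right almost split morphisms. Split monos are interchanged with split epis, the universal factorization property characterizing left almost split morphisms is verbatim the dual of the lifting property characterizing right almost split morphisms, and left minimality (invertibility of any $g\colon B\to B$ with $gf=f$) transports under the contravariant equivalence into the corresponding right minimality condition on $\Hom_\G(f,M)$.

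For (b), I combine (a) with the exactness observation. Given an almost split sequence $0\to A\to B\to C\to 0$ in $\CM\G$, applying the exact functor $\Hom_\G(-,M)$ yields an exact sequence
\[0\to\Hom_\G(C,M)\to\Hom_\G(B,M)\to\Hom_\G(A,M)\to 0\]
whose terms lie in $\Z(\C)$ by Theorem \ref{CM duality}, and which is a short exact sequence in the Frobenius structure on $\Z(\C)$ by the preservation discussed above. That the first morphism is (minimal) left almost split and the third morphism is (minimal) right almost split in $\Z(\C)$ then follows from (a) applied to the almost split data of the original sequence. The converse implication is entirely symmetric, using $\Hom_\L(-,M)$ in place of $\Hom_\G(-,M)$.

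For (c), I use (b) as an immediate corollary. Given an indecomposable $C\in\CM\G\setminus\P(\G)$, the almost split sequence in $\CM\G$ ending at $C$ has first term $\tau_{\CM\G}(C)$; applying $\Hom_\G(-,M)$ and invoking (b) produces an almost split sequence in $\Z(\C)$ starting at $\Hom_\G(C,M)$ and ending at $\Hom_\G(\tau_{\CM\G}(C),M)$, so by definition of $\tau^-_{\Z(\C)}$ we obtain the desired isomorphism
\[\tau^-_{\Z(\C)}(\Hom_\G(C,M))\simeq \Hom_\G(\tau_{\CM\G}(C),M).\]
The second isomorphism is proved symmetrically, starting from an almost split sequence in $\Z(\C)$ (existence guaranteed by Theorem \ref{prop:Dperpfunctfinite}(c)) and applying $\Hom_\L(-,M)$. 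There is no serious obstacle here; the entire argument is a routine transport along the duality of Theorem \ref{CM duality}, the only delicate point being the verification that this duality preserves conflations in both directions, which however is immediate from the projectivity of $M\in\mod\G$ in one direction and from being the quasi-inverse of an exact equivalence in the other.
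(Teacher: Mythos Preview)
Your proposal is correct and takes essentially the same approach as the paper, which simply states that all assertions are immediate from Theorem \ref{CM duality}. You have supplied the routine details (transport of almost split morphisms along a duality, exactness of the functors, and the resulting identification of AR translates) that the paper leaves implicit; the one point you handle slightly informally---that $\Hom_\G(-,M)$ lands in $F_\C$-exact sequences, not just ordinary exact sequences---can be made precise either via Theorem \ref{thm:relativecotilting}(c) or, as you suggest, by noting that the induced triangle equivalence of stable categories in Theorem \ref{CM duality} forces the duality of Frobenius categories to be exact in both directions.
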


\begin{proof}
All assertions are immediate from Theorem \ref{CM duality}.
\end{proof}

\section{$n$-fold almost split extensions}\label{section:5} 

Higher Auslander--Reiten theory on $n$-cluster tilting subcategories
was introduced in \cite{I}.  An $n$-almost split sequence in an
$n$-cluster tilting subcategory $\C$ is an $n$-fold exact sequence
\[\eta\colon 0\to Y\to C_n\to \cdots \to C_1\to X\to 0\]
with indecomposable objects $X$, $Y$ in $\C$ and objects
$\{C_i\}_{i=1}^n$ in $\C$, and the sequences
\begin{eqnarray*}
&0\to \Hom_\L(\C,Y)\to \Hom_\L(\C,C_n)\to \cdots \to \Hom_\L(\C,C_1)\to
\rad_\L(\C,X)\to 0&\\
&0\to \Hom_\L(X,\C)\to \Hom_\L(C_1,\C)\to \cdots \to \Hom_\L(C_n,\C)\to
\rad_\L(Y,\C)\to 0&
\end{eqnarray*}
are exact. This is equivalent to the statement that $\eta$
represents an element in the socle of $\Ext^n_\L(X,Y)$ as an
$\End_\L(X)^\op$- or $\End_\L(Y)$-module. These properties play a key role to
generalize the notion of $n$-almost split sequences to more general
categories, and we make the following definition. 

\begin{defin}\label{def:nfoldalmostsplitextension}
Let $\L$ be an Artin algebra, $X$ and $Y$ be modules in $\mod\L$, and
$\eta$ a non-zero element in $\Ext^n_\L(X,Y)$.
\begin{enumerate}[\rm(a)]
\item We say that $\eta$ is a \emph{right $n$-fold almost split extension of $X$} if,
for every $Z$ in $\mod\L$ and a non-zero element $\xi$ in $\Ext^n_\L(X,Z)$,
there exists a morphism $f\colon Z\to Y$ such that $\eta=f\xi$, where $f\xi$ stands for $\Ext^n_\L(X,f)(\xi)$.
\item We say that $\eta$ is a \emph{left $n$-fold almost split extension of $Y$} if,
for every $Z$ in $\mod\L$ and a non-zero element $\xi$ in $\Ext^n_\L(Z,Y)$,
there exists a morphism $g\colon X\to Z$ such that $\eta=\xi g$, where $\xi g$ stands for $\Ext^n_\L(g,Y)(\xi)$.
\item We say that $\eta$ is an \emph{$n$-fold almost split extension} if it is
a right $n$-fold almost split extension of $X$ and a left $n$-fold almost split
extension of $Y$.
\end{enumerate}
\end{defin}

We changed the terminology from $n$-almost split sequence to $n$-fold almost
split extension, since an element in $\Ext^n_\L(X,Y)$ can possibly be
represented by several different long exact sequences.

We need the following isomorphisms to study $n$-fold almost split extensions.
\begin{equation}\label{ExtC}
\Ext^n_\L(X,-)=\Ext^1_\L(\Omega_\L^{n-1}(X),-)\simeq D\oHom_\L(-,\tau_n(X)).\end{equation}
In particular, $\tau_n(X)\neq0$ implies $\Ext^n_\L(X,\tau_n(X))\simeq
D\overline{\End}_\L(\tau_n(X))\neq0$.
We have the following characterizations of $n$-fold almost split extensions.

\begin{lem}\label{characterize n-fold}
Let $X$ and $Y$ be modules in $\mod\L$, and $\eta$ a non-zero element in
$\Ext^n_\L(X,Y)$. Then the following conditions are equivalent.
\begin{enumerate}[\rm(i)]
\item $\eta$ is a right $n$-fold almost split extension of $X$.
\item $\Soc\Ext_\L^n(X,-)$ is a simple functor on $\mod\L$ and generated by $\eta$.
\item $\tau_n(X)$ is indecomposable, and $f\eta=0$ holds
in $\Ext^n_\L(X,Z)$ for every morphism $f\colon Y\to Z$ in the radical of $\mod\L$.
\end{enumerate}
If $Y$ is indecomposable, then the following condition is also equivalent.
\begin{enumerate}[\rm(iv)]
\item $Y\simeq\tau_n(X)$ and $\eta$ is a non-zero element in
$\Soc{}_{\End_\L(Y)}\Ext^n_\L(X,Y)$.
\end{enumerate}
\end{lem}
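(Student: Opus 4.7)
The plan is to work through the functorial isomorphism \eqref{ExtC}, which identifies $\Ext^n_\L(X,-)$ with the covariant functor $D\oHom_\L(-,\tau_n(X))$ on $\mod\L$. Write $N := \tau_n(X)$ and $F := \Ext^n_\L(X,-) \simeq D\oHom_\L(-,N)$; if $N = 0$ then $F = 0$ and there is nothing to prove, so assume $N \ne 0$. Under this identification, an element $\eta \in F(Y)$ corresponds to a linear form $\hat\eta \colon \oHom_\L(Y,N) \to E$, and for $f \colon Y \to Z$ the element $f\eta \in F(Z)$ corresponds to the form $g \mapsto \hat\eta(gf)$. Since $\rad(\mod\L)$ is a two-sided ideal, condition (iii) --- that $f\eta = 0$ for every radical $f \colon Y \to Z$ --- is equivalent to $\hat\eta$ vanishing on the image of $\rad_\L(Y,N)$ in $\oHom_\L(Y,N)$.

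Next I would identify $\Soc F$ in the functor category on $\mod\L$ as the largest subfunctor annihilated by $\rad(\mod\L)$; pointwise this is
\[ \Soc F(Z) \;\simeq\; D\bigl(\Hom_\L(Z,N)/\rad_\L(Z,N)\bigr). \]
When $N$ is indecomposable, $\Hom_\L(Z,N)/\rad_\L(Z,N)$ is a vector space over the residue division ring $\End_\L(N)/\rad\End_\L(N)$ of dimension equal to the multiplicity of $N$ as a summand of $Z$, so $\Soc F$ is the unique simple subfunctor of $F$, supported on summands of $N$. When $N$ is decomposable, $\Soc F$ splits as a direct sum of pairwise non-isomorphic simples, hence is not simple. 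This directly yields (ii) $\Leftrightarrow$ (iii): (ii) forces $N$ indecomposable and $0 \ne \eta \in \Soc F(Y)$, which is (iii); conversely, (iii) places $\eta$ in a simple socle, where it automatically generates.

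The last step is (i) $\Leftrightarrow$ (ii). The direction (i) $\Rightarrow$ (ii) is formal: if $\eta$ satisfies (i) and $G \subseteq F$ is any nonzero subfunctor, then picking any nonzero $\xi \in G(W)$ and using (i) to obtain $f \colon W \to Y$ with $f\xi = \eta$ shows $\eta \in G(Y)$, so $\langle\eta\rangle \subseteq G$. Applying this with $G$ ranging over nonzero subfunctors of $\langle\eta\rangle$ shows $\langle\eta\rangle$ has no proper nonzero subfunctor, hence is simple; applying it with $G$ a simple subfunctor of $F$ shows every such $G$ equals $\langle\eta\rangle$, so $\Soc F = \langle\eta\rangle$. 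The converse (ii) $\Rightarrow$ (i) is the main obstacle: given that $\Soc F = \langle\eta\rangle$ is simple and $\xi \in F(Z)$ is nonzero, one needs $\Soc F \subseteq \langle\xi\rangle$, which amounts to the essentiality of $\Soc F$ in $F$. This is the standard fact underpinning the existence of almost split sequences: since $\langle\xi\rangle$ is a quotient of the representable $\Hom_\L(Z,-)$ by Yoneda and $F$ takes pointwise finite length values, any descending chain of nonzero cyclic subfunctors inside $\langle\xi\rangle$ stabilizes at a simple one, which by uniqueness of the simple socle must coincide with $\Soc F$. For (iv), when $Y$ is indecomposable the condition $\Soc F(Y) \ne 0$ forces $Y \simeq N$, and then $\eta \in \Soc F(Y)$ is precisely $\rad\End_\L(Y)\cdot\eta = 0$, that is, $\eta \in \Soc_{\End_\L(Y)}\Ext^n_\L(X,Y)$.
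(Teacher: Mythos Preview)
Your approach is the same as the paper's: both work through the isomorphism \eqref{ExtC} to identify $\Soc\Ext^n_\L(X,-)$ with $D(\Hom_\L(-,N)/\rad_\L(-,N))$ and read off (ii)$\Leftrightarrow$(iii) and (iv) from the indecomposability of $N=\tau_n(X)$, and both reduce (i)$\Leftrightarrow$(ii) to the statement that every nonzero subfunctor of $F=\Ext^n_\L(X,-)$ meets $\Soc F$.

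There is one genuine soft spot. In (ii)$\Rightarrow$(i) you argue that ``any descending chain of nonzero cyclic subfunctors inside $\langle\xi\rangle$ stabilizes at a simple one'' because $F$ takes pointwise finite-length values. That inference is not valid: a covariant functor on $\mod\L$ with finite-length values need not be Artinian in the functor category, so such chains need not stabilize. What actually makes $\Soc F$ essential is precisely the form supplied by \eqref{ExtC}: $F\simeq D\oHom_\L(-,N)$ is a subfunctor of the injective functor $D\Hom_\L(-,N)$ (equivalently, $\oHom_\L(-,N)$ is a quotient of a representable, hence finitely generated, so its $D$-dual is finitely cogenerated). Concretely, given $0\ne\xi\in F(Z)$, choose $g\colon Z\to N$ with $\hat\xi(\bar g)\ne0$; then $g\xi\ne0$ lies in $F(N)=D\overline{\End}_\L(N)$, a finite-length module over the local ring $\End_\L(N)$, so some $\End_\L(N)$-multiple of $g\xi$ lies in the socle, and your own computation shows this coincides with $(\Soc F)(N)$. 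The paper's proof simply says ``any subfunctor of $\Ext^n_\L(X,-)$ has a non-zero socle by \eqref{ExtC}'' without further comment, so you are not missing any idea the paper provides---you just need to replace the descending-chain sketch with this finitely-cogenerated argument.
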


\begin{proof}
(i) is equivalent to (ii): 
Clearly (i) is equivalent to that $\eta$ is contained in every non-zero subfunctor of
$\Ext^n_\L(X,-)$. This is equivalent to (ii) since any subfunctor of $\Ext^n_\L(X,-)$
has a non-zero socle by \eqref{ExtC}.

(ii) is equivalent to (iii): Since \eqref{ExtC} holds,
$\Soc\Ext^n_\L(X,-)$ is simple if and only if $\tau_n(X)$ is indecomposable in $\omod\L$.
In this case, $\eta$ belongs to the socle if and only if it is annihilated by the radical of
$\mod\L$. Thus the assertion follows.

(ii) is equivalent to (iv): This is immediate from \eqref{ExtC}.
\end{proof}

Now we have the following existence and uniqueness result of right
$n$-fold almost split extensions.

\begin{prop}\label{right n-fold}
Let $\L$ be an Artin algebra, and $n\ge1$.
A module $X$ in $\mod\L$ has a right $n$-fold almost split extension if and only if
$\tau_n(X)$ is indecomposable.
If these conditions are satisfied, then the following assertions hold.
\begin{enumerate}[\rm(a)]
\item Let $Y$ be an indecomposable module in $\mod\L$ and $\eta$ an element in
$\Ext^n_\L(X,Y)$. Then $\eta$ is a right $n$-fold almost split extensions of $X$
if and only if $Y\simeq\tau_n(X)$ and $\eta$ is a non-zero element in
$\Soc_{\End_\L(Y)}\Ext^n_\L(X,Y)$.
\item For $i=1,2$, assume that $\eta_i$ in $\Ext^n_\L(X,Y_i)$ is a right
$n$-fold almost split extension of $X$ with an indecomposable module $Y_i$.
Then there exists an isomorphism $f\colon Y_1\to Y_2$ such that $\eta_2=f\eta_1$.
\end{enumerate}
\end{prop}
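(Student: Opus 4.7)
My plan is to derive all parts of Proposition \ref{right n-fold} as bookkeeping from Lemma \ref{characterize n-fold} combined with the functorial isomorphism \eqref{ExtC}
\[\Ext^n_\L(X,-) \simeq D\oHom_\L(-,\tau_n(X)).\]

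For the existence equivalence, the ``only if'' direction is immediate from Lemma \ref{characterize n-fold}(iii), which shows that whenever a right $n$-fold almost split extension of $X$ exists, $\tau_n(X)$ must be indecomposable. For the ``if'' direction, assuming $\tau_n(X)$ is indecomposable, I will take $Y := \tau_n(X)$ and use \eqref{ExtC} to obtain an isomorphism of $\End_\L(Y)$-modules
\[\Ext^n_\L(X,Y) \simeq D\overline{\End}_\L(Y).\]
Since $Y$ is indecomposable in $\omod\L$, the ring $\overline{\End}_\L(Y)$ is local, hence its top as a module over itself is simple, and dualizing gives that $D\overline{\End}_\L(Y)$ has simple nonzero $\End_\L(Y)$-socle. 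Any nonzero $\eta$ in this socle satisfies condition (iv) of Lemma \ref{characterize n-fold}, and so is a right $n$-fold almost split extension of $X$, establishing existence.

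Part (a) will then follow directly from the equivalence (i) $\Leftrightarrow$ (iv) of Lemma \ref{characterize n-fold}: once $\tau_n(X)$ is known to be indecomposable, for any indecomposable $Y$ with $\eta \in \Ext^n_\L(X,Y)$ nonzero the isomorphism $\Ext^n_\L(X,Y)\simeq D\oHom_\L(Y,\tau_n(X))$ forces a nonzero stable map between the indecomposables $Y$ and $\tau_n(X)$, whence $Y \simeq \tau_n(X)$; membership in $\Soc_{\End_\L(Y)}\Ext^n_\L(X,Y)$ is then exactly condition (iv).

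For part (b), the definition of a right $n$-fold almost split extension applied to the pair $(\eta_1,\eta_2)$ yields morphisms $f\colon Y_1\to Y_2$ with $\eta_2=f\eta_1$ and $h\colon Y_2\to Y_1$ with $\eta_1=h\eta_2$, so $\eta_1 = (hf)\eta_1$. If $hf$ were in the radical of the local ring $\End_\L(Y_1)$, condition (iii) of Lemma \ref{characterize n-fold} would give $(hf)\eta_1=0$, forcing $\eta_1=0$, a contradiction; hence $hf$ is an automorphism of $Y_1$, and symmetrically $fh$ is an automorphism of $Y_2$, so $f$ is the required isomorphism. The only subtle point in the whole argument is the socle computation in Step 1 (verifying that $D\overline{\End}_\L(Y)$ has simple $\End_\L(Y)$-socle when $Y$ is indecomposable), which reduces to the classical fact that the $R$-dual of a local finite-length algebra, as a one-sided module over itself, has simple socle dual to the simple top.
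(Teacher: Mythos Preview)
Your approach is essentially identical to the paper's: the existence equivalence via Lemma~\ref{characterize n-fold}(i)$\Leftrightarrow$(iii) and (iv)$\Rightarrow$(i), part (a) via (i)$\Leftrightarrow$(iv), and part (b) via the standard $f,h$ argument (the paper writes $(1-gf)\eta_1=0$ rather than invoking (iii), but this is the same computation). Your unpacking of the ``if'' direction---that $D\overline{\End}_\L(\tau_n(X))$ has simple $\End_\L(\tau_n(X))$-socle because $\overline{\End}_\L(\tau_n(X))$ is local---is correct and is what the paper leaves implicit.

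There is, however, one incorrect sentence in your treatment of (a). You write that for indecomposable $Y$ with $\eta\neq 0$, the isomorphism $\Ext^n_\L(X,Y)\simeq D\oHom_\L(Y,\tau_n(X))$ ``forces a nonzero stable map between the indecomposables $Y$ and $\tau_n(X)$, whence $Y\simeq\tau_n(X)$.'' A nonzero stable morphism between indecomposables does \emph{not} force an isomorphism; this step is simply false. Fortunately it is also unnecessary: part (a) is literally the statement (i)$\Leftrightarrow$(iv) of Lemma~\ref{characterize n-fold} under the standing hypothesis that $Y$ is indecomposable, so no further justification is needed. Delete that clause and your argument is clean.
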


\begin{proof}
``Only if'' part follows from Lemma \ref{characterize n-fold}(i)$\Rightarrow$(iii), and
``if'' part follows from Lemma \ref{characterize n-fold}(iv)$\Rightarrow$(i).

(a) This was shown in Lemma \ref{characterize n-fold}(i)$\Leftrightarrow$(iv).

(b) By our assumption, there exists $f\colon Y_1\to Y_2$ and $g\colon Y_2\to Y_1$ 
satisfying $\eta_2=f\eta_1$ and $\eta_1=g\eta_2$.
Since $(1-gf)\eta_1=0$ holds, $1-gf$ is not an automorphism of $Y_1$, and hence
$gf$ is an automorphism of $Y_1$.
Similarly $fg$ is an automorphism of $Y_2$, and hence $f$ and $g$ are isomorphisms.
\end{proof}

We record dual results.

\begin{lem}
Let $X$ and $Y$ be modules in $\mod\L$, and $\eta$ a non-zero element
in $\Ext^n_\L(X,Y)$. Then the following conditions are equivalent.
\begin{enumerate}[\rm(i)]
\item $\eta$ is a left $n$-fold almost split extension of $Y$.
\item $\Soc\Ext_\L^n(-,Y)$ is a simple functor on $\mod\L$ and generated by $\eta$.
\item $\tau_n^-(Y)$ is indecomposable, and $\eta f=0$ holds 
in $\Ext^n_\L(Z,Y)$ for every morphism $f\colon Z\to X$ in the radical of $\mod\L$.
\end{enumerate}
If $X$ is indecomposable, then the following condition is also equivalent.
\begin{enumerate}[\rm(iv)]
\item $X\simeq\tau_n^-(Y)$ and $\eta$ is a non-zero element in
$\Soc\Ext^n_\L(X,Y)_{\End_\L(X)}$.
\end{enumerate}
\end{lem}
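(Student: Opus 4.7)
The plan is to dualize the proof of Lemma \ref{characterize n-fold}. The crucial input is the dual of the isomorphism \eqref{ExtC}: the functorial isomorphism
\[
\Ext^n_\L(-,Y)\simeq D\uHom_\L(\tau_n^-(Y),-)
\]
on $\mod\L$. To establish it, I would combine dimension shifting $\Ext^n_\L(-,Y)\simeq\Ext^1_\L(-,\Omega_\L^{-(n-1)}(Y))$, the Auslander--Reiten formula $\Ext^1_\L(Z,A)\simeq D\uHom_\L(\tau^-(A),Z)$ (Proposition \ref{prop:AR-formula} for $F=\Ext^1_\L(-,-)$), and the identity $\tau_n^-=\tau^-\Omega_\L^{-(n-1)}$.

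Once this is in hand, each equivalence reads off exactly as in the previous lemma with left and right interchanged. For (i)$\Leftrightarrow$(ii), unwinding the definition shows that $\eta$ is a left $n$-fold almost split extension of $Y$ precisely when $\eta$ lies in every nonzero subfunctor of $\Ext^n_\L(-,Y)$; since that functor is the $D$-dual of a finitely generated functor on $\mod\L$, every nonzero subfunctor has nonzero socle, so the condition is equivalent to $\Soc\Ext^n_\L(-,Y)$ being simple and generated by $\eta$. For (ii)$\Leftrightarrow$(iii), simplicity of $\Soc\Ext^n_\L(-,Y)$ corresponds under the isomorphism to indecomposability of $\tau_n^-(Y)$ in $\umod\L$, and membership of $\eta$ in the socle corresponds to annihilation by the radical of $\mod\L$ acting by precomposition, i.e.\ $\eta f=0$ for every $f\in\rad_\L(Z,X)$. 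For (ii)$\Leftrightarrow$(iv), under the assumption that $X$ is indecomposable, I would evaluate $\Soc\Ext^n_\L(-,Y)$ at $X$: nonvanishing there forces $X\simeq\tau_n^-(Y)$ via the $D\uHom$ description, and in that case the socle at $X$ matches $\Soc\Ext^n_\L(X,Y)_{\End_\L(X)}$.

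The argument is essentially a mechanical dualization with no real obstacle. The only point requiring care, which I would verify explicitly, is tracking the $\End_\L(X)$-module structure in (iv): the socle appearing in the statement is the right $\End_\L(X)$-module socle coming from precomposition on $\Ext^n_\L(X,Y)$, and one must check that under the isomorphism this matches the socle of $D\uHom_\L(\tau_n^-(Y),X)$ transported from the natural left action of $\End_\L(X)$ on $\uHom_\L(\tau_n^-(Y),X)$.
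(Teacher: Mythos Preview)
Your proposal is correct and matches the paper's approach: the paper simply records this lemma as the dual of Lemma~\ref{characterize n-fold} without giving a separate proof, and your outline carries out exactly that dualization, supplying the expected dual of \eqref{ExtC} and tracking the module structures carefully.
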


We also have the following dual result for left $n$-fold almost split extensions.

\begin{prop}\label{left n-fold}
Let $\L$ be an Artin algebra, and $n\ge1$.
A module $Y$ in $\mod\L$ has a left $n$-fold almost split extension if and only if
$\tau_n^-(Y)$ is indecomposable.
If these conditions are satisfied, then the following assertions hold.
\begin{enumerate}[\rm(a)]
\item Let $X$ be an indecomposable module in $\mod\L$ and $\eta$ an element in
$\Ext^n_\L(X,Y)$. Then $\eta$ is a left $n$-fold almost split extensions of $Y$
if and only if $X\simeq\tau_n^-(Y)$ and $\eta$ is a non-zero element in
$\Soc\Ext^n_\L(X,Y)_{\End_\L(X)}$.
\item For $i=1,2$, assume that $\eta_i$ in $\Ext^n_\L(X_i,Y)$ is a left
$n$-fold almost split extension of $Y$ with an indecomposable module $X_i$.
Then there exists an isomorphism $f\colon X_2\to X_1$ such that $\eta_2=\eta_1f$.
\end{enumerate}
\end{prop}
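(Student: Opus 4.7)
The plan is to mirror the proof of Proposition \ref{right n-fold} verbatim, with every argument replaced by its dual. The engine that drove that proof was the isomorphism \eqref{ExtC}, obtained from $\Ext^n_\L(X,-)=\Ext^1_\L(\Omega^{n-1}_\L(X),-)$ together with the classical Auslander--Reiten formula $\Ext^1_\L(C,A)\simeq D\oHom_\L(A,\tau C)$. I would begin by writing down the left-hand counterpart, namely
\[
\Ext^n_\L(-,Y)=\Ext^1_\L(-,\Omega^{-(n-1)}_\L(Y))\simeq D\uHom_\L(\tau_n^-(Y),-),
\]
obtained from the other version of the AR formula $\Ext^1_\L(-,A)\simeq D\uHom_\L(\tau^-A,-)$ and the definition $\tau_n^-=\tau^-\Omega^{-(n-1)}_\L$. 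This isomorphism is a functor on $\mod\L$, and the $\End_\L(X)$-action on $\Ext^n_\L(X,Y)$ from the right corresponds to the natural left $\End_\L(X)$-action on $D\uHom_\L(\tau_n^-(Y),X)$.

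From this formula the unnamed Lemma preceding Proposition \ref{left n-fold} follows directly by dualizing the corresponding arguments: any nonzero subfunctor of $\Ext^n_\L(-,Y)$ has a nonzero socle, so condition~(i) (being contained in every nonzero subfunctor) is equivalent to~(ii) (generating the socle); the socle is simple iff $\tau_n^-(Y)$ is indecomposable in $\umod\L$, and under this condition an element lies in it iff it is annihilated by the radical from the right, giving~(iii); and when $X$ is indecomposable the identification with $D\uHom_\L(\tau_n^-(Y),X)$ turns the socle into $\Soc\Ext^n_\L(X,Y)_{\End_\L(X)}$ with $X\simeq\tau_n^-(Y)$, which is~(iv).

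With the Lemma in hand, the proposition is immediate. The ``only if'' direction uses Lemma~(i)$\Rightarrow$(iii), and the ``if'' direction uses Lemma~(iv)$\Rightarrow$(i) after setting $X=\tau_n^-(Y)$ and picking any nonzero element of the socle of $\Ext^n_\L(X,Y)_{\End_\L(X)}$. Part~(a) is literally the equivalence (i)$\Leftrightarrow$(iv). For part~(b), the left almost split property applied to $\eta_1$ and $\eta_2$ produces $f\colon X_2\to X_1$ and $g\colon X_1\to X_2$ with $\eta_2=\eta_1 f$ and $\eta_1=\eta_2 g$; then $\eta_1(1-fg)=0$, so $1-fg$ is not an automorphism of the indecomposable $X_1$ (else $\eta_1=0$), whence $fg$ is. Symmetrically $gf$ is an automorphism of $X_2$, so $f$ is the asserted isomorphism.

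There is essentially no obstacle here; this is a formal dualization. The only point that requires a moment of care is bookkeeping on which side of $\Ext^n_\L(X,Y)$ the endomorphism ring acts, since the passage from right almost split to left almost split swaps left and right module structures through the duality $D$ and the Auslander--Bridger transpose hidden inside $\tau^-$.
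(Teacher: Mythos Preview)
Your proposal is correct and follows exactly the approach the paper takes: the paper simply records Proposition~\ref{left n-fold} (and the preceding unnamed Lemma) as the dual of Proposition~\ref{right n-fold} and Lemma~\ref{characterize n-fold} without spelling out a proof, and you have carried out that dualization in detail. The dual Auslander--Reiten isomorphism you wrote down and the side-swap bookkeeping are both handled correctly.
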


The following easy observation shows that all $n$-fold almost
split extensions can be obtained from those between indecomposable modules.
\begin{lem}\label{between indecomposable}
  Let $X=\bigoplus_{i=1}^mX_i$ and $Y=\bigoplus_{j=1}^\ell Y_j$ be
  modules in $\mod\L$ with indecomposable summands $X_i$ and $Y_j$,
  and $\eta=(\eta_{ij})_{i,j}$ a non-zero element in
  $\Ext^n_\L(X,Y)=\bigoplus_{i,j}\Ext^n_\L(X_i,Y_j)$.
\begin{enumerate}[\rm(a)]
\item $\eta$ is a right $n$-fold almost split extension of $X$ if and
  only if the following conditions hold.
\begin{enumerate}[\rm(i)]
\item There exists $1\le i_0\le m$ such that $\Ext^n_\L(X_i,-)=0$ for every $i\neq i_0$.
\item $\eta_{i_0j}$ is either zero or a right $n$-fold almost split extension of $X_{i_0}$
for every $j$.
\end{enumerate}
\item $\eta$ is a left $n$-fold almost split extension of $Y$ if and only if the following conditions hold.
\begin{enumerate}[\rm(i)]
\item There exists $1\le j_0\le\ell$ such that $\Ext^n_\L(-,Y_j)=0$ for every $j\neq j_0$.
\item $\eta_{ij_0}$ is either zero or a left $n$-fold almost split extension of $Y_{j_0}$
for every $j$.
\end{enumerate}
\item $\eta$ is an $n$-fold almost split extension if and only if the following conditions hold.
\begin{enumerate}[\rm(i)]
\item There exist $1\le i_0\le m$ and $1\le j_0\le\ell$ such that $\Ext^n_\L(X_i,-)=0$
for every $i\neq i_0$ and $\Ext^n_\L(-,Y_j)=0$ for every $j\neq j_0$.
\item $\eta_{i_0j_0}$ is an $n$-fold almost split extension.
\end{enumerate}
\end{enumerate}
\end{lem}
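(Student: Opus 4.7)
My strategy is to reduce everything to the socle characterization of $n$-fold almost split extensions given in Lemma~\ref{characterize n-fold} (and its left-sided dual), combined with the additivity of $\Ext^n$. The two facts I will use repeatedly are: first, the decompositions $\Soc\Ext^n_\L(X,-)=\bigoplus_{i=1}^m\Soc\Ext^n_\L(X_i,-)$ and $\Soc\Ext^n_\L(-,Y)=\bigoplus_{j=1}^\ell\Soc\Ext^n_\L(-,Y_j)$; and second---extracted from the argument already given in the proof of Lemma~\ref{characterize n-fold} via the isomorphism \eqref{ExtC}---that every non-zero subfunctor of $\Ext^n_\L(X_i,-)$ has non-zero socle, so $\Soc\Ext^n_\L(X_i,-)=0$ is equivalent to $\Ext^n_\L(X_i,-)=0$ (and analogously on the right in $Y_j$).

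For the ``only if'' direction of (a), I will invoke Lemma~\ref{characterize n-fold}(i)$\Leftrightarrow$(ii) to obtain that $\Soc\Ext^n_\L(X,-)$ is simple; the direct-sum decomposition then forces exactly one summand $\Soc\Ext^n_\L(X_{i_0},-)$ to be non-zero, whence (i) follows from the second observation above. All $\eta_{ij}$ with $i\ne i_0$ are then automatically zero, and $\eta$ itself is a non-zero element of the simple subfunctor $\Soc\Ext^n_\L(X_{i_0},-)$ evaluated at $Y$; reading off $Y_j$-components shows each non-zero $\eta_{i_0 j}$ lies in the simple socle and therefore generates it, so by Lemma~\ref{characterize n-fold}(ii)$\Rightarrow$(i) it is a right $n$-fold almost split extension of $X_{i_0}$, giving (ii). For the converse, (i) gives $\Ext^n_\L(X,-)=\Ext^n_\L(X_{i_0},-)$; a non-zero $\eta_{i_0 j_0}$ provided by (ii) forces $\Soc\Ext^n_\L(X_{i_0},-)$ to be simple by Lemma~\ref{characterize n-fold}; each component $\eta_{i_0 j}$ of $\eta$ then either vanishes or lies in this simple socle, so $\eta$ itself is non-zero in it and generates it, whence Lemma~\ref{characterize n-fold} again makes $\eta$ a right $n$-fold almost split extension of $X$.

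Part (b) is proved by the exact dual argument using the dual of Lemma~\ref{characterize n-fold} stated just before Proposition~\ref{left n-fold}. For part (c) I will combine (a) and (b): the respective parts supply $i_0$ and $j_0$, and conditions (a)(i) and (b)(i) together force $\eta_{ij}=0$ whenever $(i,j)\ne(i_0,j_0)$; for instance, if $j\ne j_0$ then $\eta_{i_0 j}\in\Ext^n_\L(X_{i_0},Y_j)$ lies inside the zero functor $\Ext^n_\L(-,Y_j)$, and symmetrically for $i\ne i_0$. Thus $\eta$ reduces to the single component $\eta_{i_0 j_0}$, which by (a)(ii) and (b)(ii) must be both a right and a left $n$-fold almost split extension, i.e.\ an $n$-fold almost split extension between indecomposables; the converse is obtained by the same bookkeeping.

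The whole argument is essentially mechanical once Lemma~\ref{characterize n-fold} is applied; the only non-trivial input is the equivalence ``socle zero iff functor zero'', which rests on $D\oHom_\L(-,M)=0\iff M=0$ in $\omod\L$ and is exactly what makes the decomposition of $\Soc\Ext^n_\L(X,-)$ translate back into vanishing of the individual $\Ext^n_\L(X_i,-)$. The one place where care is needed is the bookkeeping around possibly-zero components $\eta_{ij}$---in particular, verifying that each non-zero $\eta_{i_0 j}$ (respectively $\eta_{ij_0}$) really does generate the one-dimensional simple socle rather than just land inside it---but this is immediate from simplicity together with non-vanishing.
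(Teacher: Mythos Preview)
Your proof is correct and essentially the same as the paper's, only more explicit. The paper dispatches (a) in three lines: the ``if'' part ``follows directly from definition'', and for ``only if'' it invokes Lemma~\ref{characterize n-fold}(iii) to conclude $\tau_n(X)$ is indecomposable (hence (i)), then says (ii) ``can be checked easily from definition''; parts (b) and (c) are declared dual and immediate. You route everything through the equivalent condition (ii) of Lemma~\ref{characterize n-fold} (simplicity of the socle functor) rather than condition (iii), and you spell out the bookkeeping with the direct-sum decomposition of $\Soc\Ext^n_\L(X,-)$ and the ``socle zero iff functor zero'' consequence of \eqref{ExtC}. Since (ii) and (iii) are proved equivalent in the same lemma via the same isomorphism \eqref{ExtC}, the two arguments are really the same idea unpacked to different depths.
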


\begin{proof}
  (a) The ``if'' part follows directly from
  definition.  We show the "only if'' part. It follows from
  Lemma \ref{characterize n-fold}(iii) that $\tau_n(X)$ is
  indecomposable, and hence (i) holds. One can check (ii) easily from
  definition.

(b) This is dual of (a).

(c) This follows immediately from (a) and (b).
\end{proof}

Now we consider $n$-fold almost split extensions.

\begin{lem}\label{compare socle}
Let $X$ and $Y$ be indecomposable modules in $\mod\L$ satisfying
$Y\simeq\tau_n(X)$ and $X\simeq\tau_n^-(Y)$.
Then $\Soc{}_{\End_\L(Y)}\Ext^n_\L(X,Y)=\Soc\Ext^n_\L(X,Y)_{\End_\L(X)}$ holds.
\end{lem}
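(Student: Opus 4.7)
The plan is to identify $V:=\Ext^n_\L(X,Y)$ as a bimodule with $B:=\End_\L(Y)$ acting on the left and $A:=\End_\L(X)$ on the right, and to argue that $V$ has a \emph{unique} simple submodule from each side. Once this is established, the two socles coincide by a formal argument: since the two actions commute, each one-sided socle is automatically a sub-bimodule, so being non-zero from one side it must contain the unique simple submodule from the other side.

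The key inputs are two natural isomorphisms coming from the AR-type formula \eqref{ExtC} and its symmetric analogue
\[\Ext^n_\L(-,Y)\simeq D\uHom_\L(\tau_n^-(Y),-).\]
Specializing the second argument of \eqref{ExtC} to $Y=\tau_n(X)$ gives a left $B$-linear iso $V\simeq D\overline{\End}_\L(Y)=:D\overline{B}$, and specializing the first argument of the symmetric formula to $X=\tau_n^-(Y)$ gives a right $A$-linear iso $V\simeq D\underline{\End}_\L(X)=:D\underline{A}$.

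Since $Y\simeq\tau_n(X)$ and $X\simeq\tau_n^-(Y)$ are indecomposable and non-zero, $X$ is non-projective and $Y$ is non-injective, hence $\underline{A}$ and $\overline{B}$ are non-zero local Artin algebras. Therefore $D\overline{B}$ is an indecomposable injective left $\overline{B}$-module --- hence also as a left $B$-module, since the $B$-action factors through $\overline{B}$ --- so its essential simple socle is $\Soc{}_BV$ transported via the iso, and symmetrically $\Soc V_A$ is the unique simple right $A$-submodule of $V$. Both socles are therefore non-zero sub-bimodules of $V$: since $\Soc V_A$ is a non-zero left $B$-submodule, essentiality forces $\Soc{}_BV\subseteq\Soc V_A$, and the symmetric argument gives the reverse inclusion.

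The main obstacle is a bookkeeping check: one must verify that the AR-formula iso $V\simeq D\overline{B}$ is genuinely left $B$-linear, i.e.\ that the Yoneda action $\Ext^n_\L(X,g)$ of $g\in B$ corresponds under the iso to the standard left action on $D\overline{B}$ (and dually on the $A$-side). This is a routine naturality computation but is the only place where conventions must be tracked with care; once it is in place, the bimodule-socle argument is purely formal.
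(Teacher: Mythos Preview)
Your proof is correct and follows essentially the same route as the paper's own argument. The paper also uses the isomorphism $\Ext^n_\L(X,Y)\simeq D\oHom_\L(Y,\tau_n(X))$ from \eqref{ExtC} to see that $\Soc{}_{\End_\L(Y)}\Ext^n_\L(X,Y)$ is simple (hence contained in every non-zero $\End_\L(Y)$-submodule), then observes that $\Soc\Ext^n_\L(X,Y)_{\End_\L(X)}$ is such a non-zero submodule, and concludes by symmetry; your bimodule/essentiality phrasing is a slightly more explicit packaging of exactly this.
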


\begin{proof}
Let $E_X:=\End_\L(X)$ and $E_Y:=\End_\L(Y)$.
Since $\Soc{}_{E_Y}\Ext^n_\L(X,Y)\simeq D(\Top\oHom_\L(Y,\tau_n(X))_{E_Y})$ 
is simple, it is contained in any non-zero submodule of ${}_{E_Y}\Ext^n_\L(X,Y)$. 
Since $\Soc\Ext^n_\L(X,Y)_{E_X}$ is also a non-zero submodule of
${}_{E_Y}\Ext^n_\L(X,Y)$, we have
$\Soc{}_{E_Y}\Ext^n_\L(X,Y)\subset\Soc\Ext^n_\L(X,Y)_{E_X}$.
Since the reverse inclusion holds by the same argument, the desired equality holds.
\end{proof}

We have the following characterization of $n$-fold almost split extensions.

\begin{thm}\label{n-fold}
Let $\L$ be an Artin algebra, and $n\ge1$.
\begin{enumerate}[\rm(a)]
\item Let $X$ be an indecomposable module in $\mod\L$.
There exists an $n$-fold almost split extension in $\Ext^n_\L(X,Y)$ for
some module $Y$ in $\mod\L$ if and only if $\tau_n(X)$ is
indecomposable and $\tau_n^-\tau_n(X)\simeq X$.
If these conditions are satisfied, then any right $n$-fold almost split extension $\eta$
of $X$ in $\Ext^n_\L(X,Y)$ with an indecomposable module $Y$
is an $n$-fold almost splits extension.
\item Let $Y$ be an indecomposable module in $\mod\L$.
There exists an $n$-fold almost split extension in $\Ext^n_\L(X,Y)$ for
some module $X$ in $\mod\L$ if and only if $\tau_n^-(Y)$ is
indecomposable and $\tau_n\tau_n^-(Y)\simeq Y$.
If these conditions are satisfied, then any left $n$-fold almost split extension $\eta$
of $Y$ in $\Ext^n_\L(X,Y)$ with an indecomposable module $X$
is an $n$-fold almost splits extension.
\end{enumerate}
\end{thm}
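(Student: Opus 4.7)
My plan is to prove (a) first; part (b) is then the dual statement, obtained either by repeating the argument or by passing to $\L^{\op}$ via the transpose. Throughout, I will use the key formula (\ref{ExtC}) to translate socles of $\Ext^n$-functors into expressions involving $\tau_n$, and Lemma \ref{compare socle} to match the left and right socle.

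For the "only if" direction of (a), suppose $\eta\in\Ext^n_\L(X,Y)$ is an $n$-fold almost split extension for some module $Y$. Since $\eta$ is in particular a right $n$-fold almost split extension of $X$, Lemma \ref{characterize n-fold}(i)$\Rightarrow$(iii) gives that $\tau_n(X)$ is indecomposable. Now Lemma \ref{between indecomposable}(c) lets me reduce to the case that $Y$ is indecomposable: the component $\eta_{i_0j_0}$ in its decomposition is itself an $n$-fold almost split extension between indecomposable modules. Then Proposition \ref{right n-fold}(a) identifies $Y\simeq \tau_n(X)$, while the dual of Proposition \ref{right n-fold}, namely Proposition \ref{left n-fold}(a), gives $X\simeq \tau_n^-(Y)\simeq \tau_n^-\tau_n(X)$, as required.

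For the "if" direction, assume $\tau_n(X)$ is indecomposable and $\tau_n^-\tau_n(X)\simeq X$. Set $Y:=\tau_n(X)$, so $Y$ is indecomposable with $\tau_n^-(Y)\simeq X$. By (\ref{ExtC}) we have $\Ext^n_\L(X,Y)\simeq D\oHom_\L(Y,\tau_n(X))$, which is non-zero and whose left socle over $\End_\L(Y)$ is simple; the dual computation for $\Ext^n_\L(\tau_n^-(Y),Y)$ shows that the right socle over $\End_\L(X)$ is simple as well. The key point is Lemma \ref{compare socle}: under our hypotheses $Y\simeq\tau_n(X)$ and $X\simeq\tau_n^-(Y)$ both hold, so the two socles coincide inside $\Ext^n_\L(X,Y)$. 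Choose any non-zero $\eta$ in this common socle. Then Lemma \ref{characterize n-fold}(iv)$\Rightarrow$(i) makes $\eta$ a right $n$-fold almost split extension of $X$, and the dual statement makes $\eta$ a left $n$-fold almost split extension of $Y$; hence $\eta$ is an $n$-fold almost split extension.

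For the "Moreover" clause in (a), suppose $\eta\in\Ext^n_\L(X,Y)$ is a right $n$-fold almost split extension of $X$ with $Y$ indecomposable. By Proposition \ref{right n-fold}(a), $Y\simeq\tau_n(X)$ and $\eta$ generates $\Soc{}_{\End_\L(Y)}\Ext^n_\L(X,Y)$. Combined with the standing hypothesis $\tau_n^-\tau_n(X)\simeq X$, Lemma \ref{compare socle} applies and yields $\Soc{}_{\End_\L(Y)}\Ext^n_\L(X,Y)=\Soc\Ext^n_\L(X,Y)_{\End_\L(X)}$. Hence $\eta$ is simultaneously a non-zero element of the right socle over $\End_\L(X)$, and Proposition \ref{left n-fold}(a) converts this into the statement that $\eta$ is a left $n$-fold almost split extension of $Y$; combined, $\eta$ is an $n$-fold almost split extension. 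Part (b) is then obtained by applying the already-proved (a) to the opposite algebra $\L^{\op}$, using that $\tau$ and $\tau^-$ swap under $\Tr$ and extension groups dualize. The main obstacle here is really just bookkeeping: recognizing that Lemma \ref{compare socle} is exactly the hinge that forces a right almost split extension to simultaneously be left almost split once the compatibility $\tau_n^-\tau_n(X)\simeq X$ is assumed.
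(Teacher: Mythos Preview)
Your proof is correct and follows essentially the same route as the paper: reduce to indecomposable end terms via Lemma \ref{between indecomposable}, identify the end terms via Propositions \ref{right n-fold}(a) and \ref{left n-fold}(a), and use Lemma \ref{compare socle} as the hinge that equates the two socles so that a right $n$-fold almost split extension is automatically left as well. The paper's write-up is more compressed (it folds the ``Moreover'' clause into the ``if'' argument and leaves the indecomposability of $\tau_n(X)$ implicit in its invocation of Proposition \ref{right n-fold}(a)), but the logical structure is identical to yours.
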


\begin{proof}
(a) We show the ``if'' part. Let $Y:=\tau_n(X)$.
Then $\Soc{}_{\End_\L(Y)}\Ext^n_\L(X,Y)=\Soc\Ext^n_\L(X,Y)_{\End_\L(X)}$ holds by
Lemma \ref{compare socle}, and any non-zero element is a right $n$-fold almost
split extension of $X$ by Proposition \ref{right n-fold}(a) and left $n$-fold almost
split extension of $X$ by Proposition \ref{left n-fold}(a) .

This argument also gives a proof of the second statement.

We show the ``only if'' part.
By Lemma \ref{between indecomposable}, we can assume that $Y$ is indecomposable.
By Propositions \ref{right n-fold}(a) and \ref{left n-fold}(a), we have $Y\simeq\tau_n(X)$ 
and $X\simeq\tau_n^-(Y)$. Thus the assertions hold.

(b) This is dual of (a).
\end{proof}

Applying these general results to $n$-precluster tilting subcategories,
we have the following result.

\begin{thm}
Let $\L$ be an Artin algebra and $\C$ an $n$-precluster tilting subcategory in $\mod\L$
with $n\ge1$.
\begin{enumerate}[\rm(a)]
\item Every indecomposable object $X$ in $\Z(\C)\setminus\P(\L)$ has
an $n$-fold almost split extension in $\Ext^n_\L(X,\tau_n(X))$.
\item Every indecomposable object $Y$ in $\Z(\C)\setminus\I(\L)$ has
an $n$-fold almost split extension in $\Ext^n_\L(\tau_n^-(Y),Y)$.
\end{enumerate}
\end{thm}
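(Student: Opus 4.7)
The plan is to deduce this theorem directly from the general characterization of $n$-fold almost split extensions in Theorem~\ref{n-fold} by verifying its hypotheses inside $\Z(\C)$ via the higher Auslander--Reiten translation established in Theorem~\ref{Z(C)}.

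For part (a), let $X$ be an indecomposable module in $\Z(\C) \setminus \P(\L)$. I would first invoke Theorem~\ref{Z(C)}(b), which says that $\tau_n$ restricts to a bijection between the indecomposable non-projective modules in $\Z(\C)$ and the indecomposable non-injective modules in $\Z(\C)$. In particular $\tau_n(X)$ is indecomposable. Next, Theorem~\ref{Z(C)}(a) gives mutually quasi-inverse equivalences $\tau_n \colon \underline{\Z(\C)} \simeq \overline{\Z(\C)}$ and $\tau_n^- \colon \overline{\Z(\C)} \simeq \underline{\Z(\C)}$, so $\tau_n^- \tau_n(X) \simeq X$ in $\underline{\Z(\C)}$; since $X$ is non-projective and both $X$ and $\tau_n^-\tau_n(X)$ are indecomposable non-projective modules in $\Z(\C)$ (using the bijection), this isomorphism lifts to an honest isomorphism in $\mod\L$.

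With the hypotheses $\tau_n(X)$ indecomposable and $\tau_n^- \tau_n(X) \simeq X$ verified, Theorem~\ref{n-fold}(a) applies: every right $n$-fold almost split extension of $X$ with indecomposable codomain is automatically an $n$-fold almost split extension. Existence of such an extension follows from Proposition~\ref{right n-fold} together with Lemma~\ref{characterize n-fold}(iv)$\Rightarrow$(i), which identifies the $n$-fold almost split extensions as the non-zero elements of $\Soc{}_{\End_\L(\tau_n(X))} \Ext^n_\L(X, \tau_n(X))$. This socle is non-zero because $\Ext^n_\L(X, \tau_n(X)) \simeq D\,\overline{\End}_\L(\tau_n(X))$ by \eqref{ExtC}, and $\tau_n(X) \neq 0$ since $X$ is non-projective. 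This places the desired $n$-fold almost split extension in $\Ext^n_\L(X, \tau_n(X))$, as claimed.

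Part (b) is entirely dual: for indecomposable $Y \in \Z(\C) \setminus \I(\L)$, Theorem~\ref{Z(C)}(a)(b) ensures $\tau_n^-(Y)$ is indecomposable and $\tau_n \tau_n^-(Y) \simeq Y$, and then Theorem~\ref{n-fold}(b) together with Proposition~\ref{left n-fold} produces the $n$-fold almost split extension in $\Ext^n_\L(\tau_n^-(Y), Y)$. There is no real obstacle here; the only subtlety worth noting is the need for both indecomposability of $\tau_n(X)$ \emph{and} the isomorphism $\tau_n^-\tau_n(X)\simeq X$, and both are packaged into the bijections of Theorem~\ref{Z(C)} once we work inside $\Z(\C)$ rather than in the whole of $\mod\L$.
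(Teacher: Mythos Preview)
Your proof is correct and follows essentially the same approach as the paper: verify that $\tau_n(X)$ is indecomposable and that $\tau_n^-\tau_n(X)\simeq X$ using the equivalences on $\Z(\C)$, then invoke the general existence criterion for $n$-fold almost split extensions. The paper's own proof is terser---it cites the equivalences \eqref{taun equivalences} directly and refers to Propositions~\ref{right n-fold} and~\ref{left n-fold}---whereas you spell out the passage through Theorem~\ref{n-fold}(a) (which is what actually glues the right and left conditions together via Lemma~\ref{compare socle}); this extra care is appropriate and does not constitute a different strategy.
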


\begin{proof}
Since $\tau_n(X)$ and $\tau_n^-(Y)$ are indecomposable by the equivalences
\eqref{taun equivalences}, the assertions follow immediate from
Propositions \ref{right n-fold} and \ref{left n-fold}.
\end{proof}

Now we consider exact sequences representing a right $n$-fold
almost split extension of a module $X$ in $\mod\L$ such that
$\tau_n(X)$ is indecomposable.  We consider a projective resolution
\[0\to\Omega_\L^{n-1}(X)\to P_{n-2}\to\cdots\to P_0\to X\to0.\]
Then $\Omega_\L^{n-1}(X)=Z\oplus P$ holds for an 
indecomposable module $Z$ in $(\mod\L)\setminus\P(\L)$ and a module $P$ in $\P(\L)$.
Let
\[0\to\tau_n(X)\to E\oplus P\to\Omega_\L^{n-1}(X)\to0\]
be a direct sum of an almost split sequence $0\to \tau_n(X)\to E\to Z\to0$ and
a split exact sequence $0\to 0\to P\to P\to0$. Taking the Yoneda product of these
exact sequences, we have an exact sequence
\begin{equation}\label{represent right n-fold}
0\to\tau_n(X)\to E\oplus P\to P_{n-2}\to\cdots\to P_0\to X\to0
\end{equation}
which represents a right $n$-fold almost split extensions of a module $X$.
In fact, one can apply Lemma \ref{characterize n-fold}(iii)$\Rightarrow$(i).

Dually, a left $n$-fold almost split extensions of a module $Y$ in $\mod\L$ such that
$\tau_n^-(Y)$ is indecomposable is represented by an exact sequence
\begin{equation}\label{represent left n-fold}
0\to Y\to I^0\to\cdots\to I^{n-2}\to E'\oplus I\to\tau_n^-(Y)\to0
\end{equation}
with modules $I^0,\ldots,I^{n-2}$ and $I$ in $\I(\L)$.

These constructions are quite general. However, 
even if $X$ (respectively $Y$) in \eqref{represent right n-fold} (respectively 
\eqref{represent left n-fold}) belongs to an $n$-precluster tilting subcategory $\C$
of $\mod\L$, the module $E$ (respectively $E'$) does not necessarily belong
to even $\Z(\C)$. 

Under a certain condition on $X$ (respectively $Y$), the next result gives much nicer 
representatives, which satisfies similar properties of $n$-almost split sequences in
$n$-cluster tilting subcategories. This is an analog of \cite[Theorem 3.3.1]{I}.

\begin{thm}\label{represent n-fold}
Let $\C$ be an $n$-precluster tilting subcategory of $\mod\L$,
$X$ an indecomposable module in $\Z(\C)\setminus \P(\L)$, and $Y:=\tau_n(X)$ the corresponding
indecomposable module in $\Z(\C)\setminus \I(\L)$.
\begin{enumerate}[\rm(a)]
\item For each $0\le i\le n-1$,
an $n$-fold almost split extension in $\Ext^n_\L(X,Y)$ can be represented as
\begin{equation}\label{CZC}
0\to Y\to C_{n-1}\to \cdots \to C_{i+1}\to Z_i\to C_{i-1}\to \cdots \to C_0\to X\to 0
\end{equation}
with $Z_i$ in $\Z(\C)$ and $C_j$ in $\C$ for each $j\neq i$.
\item The following sequences are exact.
\begin{align*}
&0\to \Hom_\L(\C,Y)\to \Hom_\L(\C,C_{n-1})\to\cdots\to \Hom_\L(\C,C_{i+1})\to\Hom_\L(\C,Z_i)\\
&\qquad\qquad\qquad\to\Hom_\L(\C,C_{i-1})\to\cdots\to\Hom_\L(\C,C_0)\to\rad_\L(\C,X)\to 0,\\
&0\to \Hom_\L(X,\C)\to \Hom_\L(C_0,\C)\to\cdots\to\Hom_\L(C_{i-1},\C)\to\Hom_\L(Z_i,\C)\\
&\qquad\qquad\qquad\to\Hom_\L(C_{i+1},\C)\to\cdots\to\Hom_\L(C_{n-1},\C)\to\rad_\L(Y,\C)\to 0.
\end{align*}
\item If $X$ and $Y$ do not belong to $\C$, then the $n$-fold almost split extension in \emph{(a)}
can be given as a Yoneda product of a minimal projective resolution of $X$ in $\Z(\C)$
\begin{equation}\label{right part}
0\to\Omega_{\Z(\C)}^{i}(X)\to C_{i-1}\to \cdots \to C_{0}\to X\to 0,
\end{equation}
an almost split sequence in $\Z(\C)$
\begin{equation}\label{middle part}
0\to\Omega_{\Z(\C)}^{-(n-i-1)}(Y)\to Z_i\to\Omega_{\Z(\C)}^{i}(X)\to 0,
\end{equation}
and a minimal injective coresolution of $Y$ in $\Z(\C)$
\begin{equation}\label{left part}
0\to Y\to C_{n-1}\to \cdots \to C_{i+1}\to\Omega_{\Z(\C)}^{-(n-i-1)}(Y)\to 0.
\end{equation}
\end{enumerate}
\end{thm}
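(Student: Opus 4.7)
The plan is to first establish part~(c) by explicit construction, since it supplies part~(a) for the main case $X, Y \notin \C$, and then to deduce (b) by applying $\Hom$-functors to the three component sequences. The first step is to invoke that $\Z(\C)$ is a Frobenius category whose projective-injective objects are precisely $\C$ (Proposition~\ref{Z(C) is Frobenius}). Since $X, Y \notin \C$ are indecomposable, their minimal projective resolutions and injective coresolutions in $\Z(\C)$ are non-terminating; these are the outer pieces \eqref{right part} and \eqref{left part}. The key identification $\tau_{\Z(\C)}(\Omega^{i}_{\Z(\C)}(X)) \simeq \Omega^{-(n-i-1)}_{\Z(\C)}(Y)$ follows because the Serre functor $S = [n]\circ\tau_n$ on the triangulated category $\U(\C)$ (Theorem~\ref{Serre functor}(a)) commutes with the suspension $[1] = \Omega_{\Z(\C)}^{-1}$, forcing $\tau_n$ to commute with $[1]$ as well; combined with $\tau_{\Z(\C)} = \Omega_{\Z(\C)}^{-(n-1)}\tau_n$ and $\tau_n(X) = Y$ this yields the required isomorphism. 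Theorem~\ref{Serre functor}(c) then produces the almost split sequence \eqref{middle part} in $\Z(\C)$.

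The next step is to show that the Yoneda product of \eqref{right part}, \eqref{middle part} and \eqref{left part} represents an $n$-fold almost split extension in $\Ext^n_\L(X,Y)$. Under the canonical identifications $X \simeq \Omega^{i}_{\Z(\C)}(X)[i]$ and $\Omega^{-(n-i-1)}_{\Z(\C)}(Y)[n-i-1+1] \simeq Y[n]$ inside $\U(\C)$, this Yoneda product corresponds to the connecting morphism of the almost split triangle of $\Omega^{i}_{\Z(\C)}(X)$, which is a generator of the socle of $\Hom_{\U(\C)}(X, Y[n])$ over $\End_{\U(\C)}(Y)$. The surjection of local rings $\overline{\End}_\L(Y) \twoheadrightarrow \End_{\U(\C)}(Y)$ dualizes to an inclusion $\Hom_{\U(\C)}(X,Y[n]) \hookrightarrow D\overline{\End}_\L(Y) \simeq \Ext^n_\L(X,Y)$ via Lemma~\ref{prop:tauextformula}, and this inclusion preserves the simple socle. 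Hence the Yoneda product is a non-zero element of the socle of $\Ext^n_\L(X,Y)$ as an $\End_\L(Y)$-module, which by Proposition~\ref{right n-fold}(a), Lemma~\ref{compare socle} and Theorem~\ref{n-fold}(a) is an $n$-fold almost split extension. This establishes both (a) and (c) in the case $X, Y \notin \C$; the corresponding assertions when $X$ and $Y$ happen to lie in $\C$ reduce by Proposition~\ref{lem:preclustercategory}(a) to an analogous but simpler direct construction, which I would dispatch briefly.

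For part (b), I would apply $\Hom_\L(\C, -)$ and $\Hom_\L(-, \C)$ to each of the three component sequences and splice the results. The outer sequences lie in $\Z(\C)$ where $\C$ is projective-injective, so both functors are exact on them. The middle almost split sequence is likewise exact in $\Z(\C)$, so $\Hom_\L(\C,-)$ preserves its exactness; moreover the standard almost split theory ensures that the map $\Hom_\L(Z_i,\C) \to \Hom_\L(\Omega^{-(n-i-1)}_{\Z(\C)}(Y),\C)$ has image $\rad_\L(\Omega^{-(n-i-1)}_{\Z(\C)}(Y),\C)$, and splicing with the coresolution of $Y$ produces precisely $\rad_\L(Y,\C)$ at the right end. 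Dually on the other side; the condition $X, Y \notin \C$ also guarantees $\rad_\L(\C,X) = \Hom_\L(\C,X)$ and $\rad_\L(Y,\C) = \Hom_\L(Y,\C)$, so no correction is needed at those ends.

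The main obstacle I expect is the non-vanishing argument in the second paragraph: tracking the Yoneda product from the Frobenius category $\Z(\C)$ through $\U(\C)$ to the absolute $\Ext^n_\L(X,Y)$ requires carefully aligning the Serre duality on $\U(\C)$ with the absolute Auslander--Reiten duality, in particular making sure that the socle generator in the relative picture really maps to the socle generator in $\Ext^n_\L(X,Y)$ rather than being killed or diluted by the passage through stable quotients.
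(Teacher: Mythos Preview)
Your approach to (c) is essentially the paper's: both locate the Yoneda product in $\Hom_{\U(\C)}(X,Y[n])$ via the Frobenius structure on $\Z(\C)$ and then push the socle element forward to the socle of $\Ext^n_\L(X,Y)$. You are in fact more careful than the paper about the point that one only has an inclusion $\Hom_{\U(\C)}(X,Y[n])\hookrightarrow\Ext^n_\L(X,Y)$ (coming from the surjection $\overline{\End}_\L(Y)\twoheadrightarrow\End_{\U(\C)}(Y)$) rather than an equality.

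The genuine gap is your treatment of (a) when $X\in\C$ (equivalently $Y\in\C$, by Proposition~\ref{lem:preclustercategory}(a)). Your construction relies on forming minimal projective and injective resolutions of $X$ and $Y$ in the Frobenius category $\Z(\C)$ and splicing in an almost split sequence of $\Z(\C)$. But if $X\in\C$ then $X$ is projective-injective in $\Z(\C)$, so $\Omega^j_{\Z(\C)}(X)=0$ for all $j>0$, and there is no almost split sequence in $\Z(\C)$ ending in $\Omega^i_{\Z(\C)}(X)$; the middle piece \eqref{middle part} simply does not exist. Your claim that this case is ``analogous but simpler'' is unsupported, and Proposition~\ref{lem:preclustercategory}(a) only tells you $\tau_n(X)\in\C$, not how to build the long exact sequence \eqref{CZC}.

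The paper avoids this by proving (a) first, via a different argument that works uniformly. It starts from the generic representative \eqref{represent right n-fold} of the $n$-fold almost split extension (a Yoneda product of an ordinary almost split sequence in $\mod\L$ with a projective resolution of $X$), applies the $\Z(\C)$-approximation sequence of Corollary~\ref{approximation sequence} to the one term $E$ not already in $\C$, and then reorganizes via a pushout diagram and a Horseshoe-Lemma-type argument to slide the $\Z(\C)$-term into position $i$ while keeping all other terms in $\C$. This construction never asks whether $X\in\C$; it only uses $X\in\Z(\C)\setminus\P(\L)$. Since your argument for (b) is built on the three-piece decomposition from (c), it inherits the same gap for $X\in\C$; the paper's (b), by contrast, rests on the sequence produced in (a).
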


\begin{proof}
(a) By \eqref{represent right n-fold}, an $n$-fold almost split extension is given by the Yoneda product of
$0\to Y\to E\to\Omega^{n-1}_\L(X)\to0$ (where $E\oplus P$ is written as $E$) and
$0\to\Omega^{n-1}_\L(X)\to P_{n-2}\to\cdots\to P_0\to X\to0$.
Applying Corollary \ref{approximation sequence} to $E$, we have an exact sequence
\[0\to E\xrightarrow{f^0}C^0\xrightarrow{f^1}\cdots\xrightarrow{f^{i-1}}C^{i-1}\xrightarrow{f^i}
Z^{i}\xrightarrow{f^{i+1}}C^{i+1}\xrightarrow{f^{i+2}}\cdots\xrightarrow{f^{n-1}}C^{n-1}\to0\]
where $Z^i$ is in $\Z(\C)$, $C^j$ is in $\C$ and $E^j:=\Im f^j$ is in ${}^{\perp_j}\C$. We write $C^i:=Z^i$ for simplicity.
We have the following pushout diagram.
\[\xymatrix@R1em{
&0\ar[d]&0\ar[d]&0\ar[d]\\
0\ar[r]& Y\ar[r]\ar@{=}[d]&E\ar[r]\ar[d]&\Omega_\L^{n-1}(X)\ar[r]\ar[d]&0\\
0\ar[r]& Y\ar[r]&C^0\ar[r]\ar[d]&W\ar[r]\ar[d]&0\\
&& E^1\ar@{=}[r]\ar[d]&E^1\ar[d]\\
&&0&0
}\]
Since $\Ext^1_\L(E^j,P_{n-j})=0$ holds for any $j\ge2$, a Horseshoe Lemma-type argument gives
the following commutative diagram of exact sequences.
\[\xymatrix@R1em@C1.2em{
&0\ar[d]&0\ar[d]&&0\ar[d]&0\ar[d]\\
0\ar[r]&\Omega_\L^{n-1}(X)\ar[r]\ar[d]&P_{n-2}\ar[r]\ar[d]&\cdots\ar[r]&P_0\ar[r]\ar[d]&X\ar[r]\ar@{=}[d]&0\\
0\ar[r]&W\ar[r]\ar[d]&C^1\oplus P_{n-2}\ar[r]\ar[d]&\cdots\ar[r]&C^{n-1}\oplus P_0\ar[r]\ar[d]&X\ar[r]\ar[d]&0\\
0\ar[r]&E^1\ar[r]\ar[d]&C^1\ar[r]\ar[d]&\cdots\ar[r]&C^{n-1}\ar[r]\ar[d]&0\\
&0&0&&0
}\]
The Yoneda product
$0\to Y\to C^0\to C^1\oplus P_{n-2}\to\cdots\to C^{n-1}\oplus P_0\to
X\to0$ of the middle sequences of the diagrams above represents the
same class in $\Ext^n_\L(X,Y)$ as the sequence \eqref{represent right
  n-fold}.  Thus it represents an $n$-fold almost split extension.

(b) This is easily checked (see e.g. \cite[Lemma 3.2]{I}).

(c) It suffices to show that the Yoneda product of \eqref{right part}, \eqref{middle part}
and \eqref{left part} is an $n$-fold almost split extension.
We have functorial isomorphisms
\begin{eqnarray*}
\Ext^n_\L(X,\tau_n(X))&=&\Hom_{\U(\C)}(X,\tau_n(X)[n])
\simeq \Hom_{\U(\C)}(X[-i],\tau_n(X)[n-i])\\
&=&\Hom_{\U(\C)}(\Omega_{\Z(\C)}^{i}(X),\Omega_{\Z(\C)}^{-(n-i-1)}\tau_n(X)[1])
\end{eqnarray*}
which induces an isomorphism
\begin{eqnarray*}
&&\Soc{}_{\End_\L(\tau_n(X))}\Ext^n_\L(X,\tau_n(X))\\
&\simeq&\Soc{}_{\End_{\U(\C)}(\Omega_{\Z(\C)}^{-(n-i-1)}\tau_n(X))}\Hom_{\U(\C)}(\Omega_{\Z(\C)}^{i}(X),\Omega_{\Z(\C)}^{-(n-i-1)}\tau_n(X)[1]).
\end{eqnarray*}
Since the almost split sequence belongs to the right hand side, our Yoneda product
belongs to the left hand side.
Thus it is an $n$-fold almost split extension of $X$ by Theorem \ref{n-fold}.
\end{proof}

Note that the sequences Theorem \ref{represent n-fold}(b) are not exact in general
if we replace $\C$ by $\Z(\C)$. Thus the sequences \eqref{CZC} is not necessarily
$n$-exact in $\Z(\C)$ in the sense of Jasso \cite{J2}.

In the rest of this section we let $\C = \add M$ be a finite
$n$-precluster tilting subcategory of $\mod\L$, and let
$\G=\End_\L(M)$ be the corresponding $n$-minimal Auslander--Gorenstein algebra.
We show that the category $\CM\G$ of maximal Cohen--Macaulay modules
over $\G$ has $n$-fold almost split extensions.  This is used in the
next section to classify the $n$-minimal Auslander--Gorenstein algebras.  We
relate the $n$-fold almost split extensions to the corresponding
$n$-precluster tilting subcategory and transfer properties and
constructions, as that of the $n$-fold Auslander--Reiten translate.

The dualities $\Hom_{\L}(-,M)\colon\Z(\C)\to\CM\G$ and
$\Hom_{\G}(-,M)\colon\CM\G\to\Z(\C)$ induce dualities
\[\underline{\Z(\C)}\longleftrightarrow(\CM\G)/[\add I]\ \mbox{ and }\ 
\overline{\Z(\C)}\longleftrightarrow(\CM\G)/[\add Q],\]
where $I:={}_\G M$ is an additive generator
of $\P(\G)\cap\I(\G)$ and $Q:=\Hom_\L(D\L,M)\simeq\Hom_{\G^{\op}}(DI,\G)\in\P(\G)$.

\begin{defin}\label{define sigma}
Using the equivalences $\tau_n\colon \underline{\Z(\C)}\to \overline{\Z(\C)}$ and
  $\tau^-_n\colon \overline{\Z(\C)}\to \underline{\Z(\C)}$ 
given in Theorem \ref{Z(C)}, we define equivalences
  \[\sigma_n^-\colon (\CM\G)/[\add I]\to (\CM\G)/[\add Q]\ \mbox{ and }\ \sigma_n\colon (\CM\G)/[\add Q] \to  (\CM\G)/[\add I]\]
  making the following diagrams commutative up to isomorphisms of
  functors.
\begin{equation}\label{sigma}
\xymatrix@R=1.5em@C=60pt{
    \underline{\Z(\C)} \ar[r]^{\Hom_\L(-,M)} \ar@<-0.5ex>[d]_{\tau_n} & 
    (\CM\G)/[\add I]\ar@<-0.5ex>[d]^{\sigma^-_n}\\
    \overline{\Z(\C)} \ar[r]^-{\Hom_\L(-,M)}  &
    (\CM\G)/[\add Q]
}\ \ \ \xymatrix@R=1.5em@C=60pt{
    \underline{\Z(\C)} \ar[r]^{\Hom_\L(-,M)}  & (\CM\G)/[\add I]\\
    \overline{\Z(\C)} \ar[r]^-{\Hom_\L(-,M)} \ar@<-0.5ex>[u]^{\tau^-_n} &
    (\CM\G)/[\add Q].\ar@<-0.5ex>[u]_{\sigma_n} 
}\end{equation}
\end{defin}

Let $\G$ be an $n$-minimal Auslander--Gorenstein Artin algebra for some
$n\geq 1$.  Then we use Theorem \ref{represent n-fold} to
construct $n$-fold almost split extensions in $\CM\G$.

\begin{cor}\label{prop:nfoldalmostext-in-gammaperp} 
Let $\G$ be an $n$-minimal Auslander--Gorenstein Artin algebra with $n\geq1$,
$X$ an indecomposable module in $(\CM\G)\setminus(\add Q)$
and $Y:=\sigma_n(X)$ an indecomposable module in $(\CM\G)\setminus(\add I)$.
\begin{enumerate}[\rm(a)]
\item If $X$ does not belong to $\P(\G)$, then there exists an
  $n$-fold almost split extension in $\Ext^n_\G(X,Y)$. 
For each $0\le i\le n-1$, it is represented as
\[0\to Y\to P^0\to \cdots \to P^{i-1}\to E^i\to P^{i+1}\to \cdots \to P^{n-1}\to  X\to 0\]
with $P_j$ for all $j$ in $\P(\G)$ and $E_i$ in $\CM\G$. 
\item If $X$ belongs to $\P(\G)$, then there exists an exact sequence
\[0\to Y\to  P^0\to \cdots \to P^{i-1}\to E^i\to P^{i+1}\to \cdots \to P^{n-1}\to X\to X/\rad X\to 0,\]
with $P_j$ for all $j$ in $\P(\G)$ and $E_i$ in $\CM\G$.
\end{enumerate}
\end{cor}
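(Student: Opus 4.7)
The plan is to transfer the $n$-fold almost split extensions constructed in $\Z(\C)$ (Theorem \ref{represent n-fold}) across to $\CM\G$ via the CM-duality $\Hom_\L(-,M)\colon\Z(\C)\to\CM\G$ of Theorem \ref{CM duality}. Unified setup: let $X':=\Hom_\G(X,M)\in\Z(\C)\setminus\I(\L)$ be the indecomposable corresponding to $X$, put $Y':=\tau_n^-(X')\in\Z(\C)\setminus\P(\L)$, and note $Y=\Hom_\L(Y',M)=\sigma_n(X)$ by Definition \ref{define sigma}. Using Proposition \ref{lem:preclustercategory} together with the equivalences \eqref{taun equivalences}, one checks $X'\in\C\iff Y'\in\C$, which via the duality is equivalent to $X\in\P(\G)$; this dichotomy precisely distinguishes (a) from (b).

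Applying Theorem \ref{represent n-fold} to $Y'\in\Z(\C)\setminus\P(\L)$ (with $\tau_n(Y')=X'$) yields, for each $0\le i\le n-1$, a representative
\[0\to X'\to C_{n-1}\to\cdots\to Z_i\to\cdots\to C_0\to Y'\to 0\]
of an $n$-fold almost split extension in $\Ext^n_\L(Y',X')$, with $C_j\in\C$ and $Z_i\in\Z(\C)$. The second exact sequence of Theorem \ref{represent n-fold}(b) then reads, after identifying $\Hom_\L(-,\C)=\Hom_\L(-,M)$ and setting $P^j:=\Hom_\L(C_j,M)\in\P(\G)$ and $E^i:=\Hom_\L(Z_i,M)\in\CM\G$,
\[0\to Y\to P^0\to\cdots\to E^i\to\cdots\to P^{n-1}\to\rad_\L(X',M)\to 0\]
exact in $\mod\G$. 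In case (a), $X'\notin\C$ indecomposable forces every morphism $X'\to M$ to be radical, whence $\rad_\L(X',M)=\Hom_\L(X',M)=X$ and we obtain exactly the sequence of (a). In case (b), $X'$ is an indecomposable summand $M_s$ of $M$, so $X=\Hom_\L(M_s,M)=\G e_s$ is indecomposable projective, $\rad_\L(X',M)=\rad\G\cdot e_s=\rad X$, and splicing with the tautological $0\to\rad X\to X\to X/\rad X\to 0$ yields the sequence of (b).

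It remains, in case (a), to verify that the resulting class is an $n$-fold almost split extension in $\Ext^n_\G(X,Y)$. By Theorem \ref{thm:relativecotilting}(c) and Proposition \ref{prop:relative=absolute}(b) one obtains a functorial bimodule isomorphism $\Ext^n_\L(Y',X')\simeq\Ext^n_\G(X,Y)$ under which $\End_\L(X')\simeq\End_\G(X)^{\op}$ and $\End_\L(Y')\simeq\End_\G(Y)^{\op}$ (the $\op$ arising from contravariance), so socles correspond. Since the class $\eta$ is a nonzero socle element by Lemma \ref{characterize n-fold}, so is its image, which Theorem \ref{n-fold} then identifies as an $n$-fold almost split extension in $\mod\G$. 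The main obstacle lies in two pieces of careful bookkeeping: verifying the identity $\rad_\L(X',M)=\rad X$ as $\G$-submodules of $X$ when $X'\in\C$, and tracking how the contravariant duality exchanges left and right $\End$-module structures so that the socle identification of the $\Ext$-isomorphism genuinely aligns with the bimodule socles used in Lemma \ref{characterize n-fold} and Theorem \ref{n-fold}.
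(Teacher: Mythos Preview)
Your approach is exactly the paper's: transfer Theorem \ref{represent n-fold} across the duality of Theorem \ref{CM duality}. The paper's own proof is a single sentence invoking precisely these two results, so your write-up is a more detailed execution of the same idea; the construction of the sequences in (a) and (b), the handling of $\rad_\L(X',M)$ in the two cases, and the splicing in case (b) are all carried out correctly.

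There is one technical slip in your final paragraph. You invoke Proposition \ref{prop:relative=absolute}(b) together with Theorem \ref{thm:relativecotilting}(c) to obtain $\Ext^n_\L(Y',X')\simeq\Ext^n_\G(X,Y)$, but Proposition \ref{prop:relative=absolute}(b) only gives $\Ext^i_{F_M}=\Ext^i_\L$ for $0<i<n$, not for $i=n$; in general $\Ext^n_{F_M}(Y',X')$ is only a submodule of $\Ext^n_\L(Y',X')$ (already for $n=1$ this inclusion is typically strict). What Theorem \ref{thm:relativecotilting}(c) actually gives is $\Ext^n_{F_M}(Y',X')\simeq\Ext^n_\G(X,Y)$, so to complete your socle argument you would still need to check that the class of your sequence lies in $\Ext^n_{F_M}(Y',X')$ (equivalently, that the representative in Theorem \ref{represent n-fold}(a) is $F_\C$-exact) and that $Y\simeq\tau_n^\G(X)$ in $\mod\G$, neither of which is immediate from the results you cite. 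The paper's one-line proof glosses over the same point, so this is not a divergence from the intended argument but a detail that deserves more care than either treatment gives it.
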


We call the sequence in (b) a \emph{fundamental sequence}.
\begin{proof}
  Both assertions follow from Theorem \ref{represent n-fold} and the
  duality between $\CM\G$ and $\mathcal{Z}(\C)$ for the corresponding
  $n$-precluster tilting subcategory $\C$ (Theorem \ref{CM duality}).
\end{proof}

\section{Four classes of $n$-minimal Auslander--Gorenstein algebras}\label{section:6}

$1$-Auslander--Gorenstein Artin algebras were classified in \cite{ASIV}
in the class of selfinjective Artin algebras and three different
disjoint classes using the correspondence with the
$\tau$-selfinjective Artin algebras.  This section is devoted to
extending this characterization to $n$-minimal Auslander--Gorenstein algebras
for $n\geq 1$.

Suppose $\G$ is an $n$-minimal Auslander--Gorenstein algebra.
By Theorem \ref{AS correspondence}, there exist
an Artin algebra $\L$ and an $n$-precluster tilting $\L$-module
$M$ with $\G\simeq \End_\L(M)$. By Proposition
\ref{lem:preclustercategory}(d), $\add M = \add\{\P_n\vee\I_n,N\}$ holds
for a $\L$-module $N$ satisfying $\tau_n(N)\simeq N$.  Then four
distinct cases can occur:
\begin{enumerate}[\rm(A)]
\item $\L$ is selfinjective and $N=0$.
\item $\L$ is selfinjective and $N\neq 0$.
\item $\L$ is non-selfinjective and $N=0$. 
\item $\L$ is non-selfinjective and $N\neq 0$. 
\end{enumerate}
This also gives rise to a coarser division, namely (A) and (C)
together, or in other words when $N = 0$.

We characterize all these four cases in terms of properties of $\G$,
and in particular we show that an algebra of type (B) or (D) is
constructed from an algebra of type (A) or (C), respectively.

The case (A) is easy, as this occurs if and only if $\G$ is a
selfinjective algebra by Proposition \ref{prop:endoring}.
So we move on to describe the correspondence given by type (B).
\begin{prop}\label{prop:case-B}
  Let $n\geq 1$. The bijection in \textup{Theorem \ref{AS
        correspondence}} restricts to a bijection between
\begin{eqnarray*}
&\{\textrm{finite $n$-precluster tilting subcategories of type \emph{(B)}}\}\ 
\mbox{ and }&\\
&\left\{\textrm{\parbox{0.8\textwidth}{
      non-selfinjective $n$-minimal Auslander--Gorenstein algebras $\G$ with
      an additive generator $I$ of $\P(\G)\cap\I(\G)$ satisfying
      $\add\Top {_\G I} = \add\Soc {_\G I}$}}\right\}.&
\end{eqnarray*}
\end{prop}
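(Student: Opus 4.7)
The plan is to translate, via the bijection of Theorem \ref{AS correspondence}, the two defining conditions of case (B)---namely ``$\L$ selfinjective'' and ``$N\ne 0$''---into intrinsic properties of the corresponding algebra $\G=\End_\L(M)$, where $I={}_\G M$ is the additive generator of $\P(\G)\cap\I(\G)$.

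The condition $N\ne 0$ is easy to handle: since $\tau_n$ vanishes on projectives, a nonzero $N$ with $\tau_n(N)\simeq N$ is non-projective, so the condition amounts to $M\notin\P(\L)$, which by Proposition \ref{prop:endoring}(c) is equivalent to $\G$ being non-selfinjective. Hence case (B) is singled out among all finite $n$-precluster tilting subcategories by the pair of conditions ``$\L$ selfinjective'' and ``$\G$ non-selfinjective''.

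The substantive step is to show that $\L$ is selfinjective if and only if $\add\Top{_\G I}=\add\Soc{_\G I}$. Since $\domdim\G\ge 2$, the functor $\Hom_\L(-,M)$ restricts to an equivalence $\add M\xrightarrow{\sim}\P(\G)$ under which indecomposable summands of ${}_\L\L$ correspond to indecomposable summands of $I$. Writing $S_X$ for the simple $\G$-module indexed by an indecomposable summand $X$ of $M$, one has $\Top{_\G\Hom_\L(P,M)}\simeq S_P$ for every indecomposable projective $\L$-module $P$. The key input is the Nakayama-type isomorphism
\[
\Hom_\L(P,M)\simeq D\Hom_\L(M,\nu_\L P)\qquad\text{as }\G\text{-modules},
\]
where $\nu_\L P:=D\Hom_\L(P,\L)$; for $P=\L e$ this is a one-line check, since both sides are canonically isomorphic to $eM$. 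It identifies $\Soc{_\G\Hom_\L(P,M)}$ with $S_{\nu_\L P}$. Summing over all indecomposable projective $P$ yields
\[
\add\Top{_\G I}=\{S_P\mid P\in\mathrm{ind}\,\P(\L)\},\qquad \add\Soc{_\G I}=\{S_Q\mid Q\in\mathrm{ind}\,\I(\L)\},
\]
and these two collections of simple $\G$-modules coincide exactly when $\P(\L)=\I(\L)$ inside $\mathrm{ind}(\add M)$, i.e., exactly when $\L$ is selfinjective.

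Combining the two steps gives the claimed bijection. The only technical input is the Nakayama identification of the socle of $\Hom_\L(P,M)$; everything else is a direct translation under the correspondence of Theorem \ref{AS correspondence}.
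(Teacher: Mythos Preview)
Your proof is correct and takes a cleaner, more self-contained route than the paper's. The paper invokes an external lemma \cite[Lemma~2.3(b)]{ASIV}, which says that $\Hom_\G(-,I)$ induces an isomorphism $\Hom_\G(A,C)\simeq\Hom_\L(\Hom_\G(C,I),\Hom_\G(A,I))$ for $A\in\add I$; it then argues indirectly by reformulating $\add\Top{_\G I}=\add\Soc{_\G I}$ as ``$\Hom_\G(I,C)=0\Leftrightarrow\Hom_\G(C,I)=0$ for all $C$'', with selfinjectivity of $\L$ entering through the fact that $\Hom_\L(-,\L)$ detects nonzero modules precisely when $\L$ is a cogenerator. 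Your approach instead pins down the top and socle of each indecomposable summand $\Hom_\L(P,M)$ of $I$ explicitly via the Nakayama isomorphism $\Hom_\L(P,M)\simeq D\Hom_\L(M,\nu_\L P)$, yielding the transparent descriptions $\add\Top{_\G I}=\add\{S_P:P\in\mathrm{ind}\,\P(\L)\}$ and $\add\Soc{_\G I}=\add\{S_Q:Q\in\mathrm{ind}\,\I(\L)\}$. This is more elementary and avoids the external reference entirely.

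One minor point of presentation: your claim that ``$N\ne 0$ amounts to $M\notin\P(\L)$'' is only valid once $\L$ is assumed selfinjective---otherwise $D\L$ already forces $M\notin\P(\L)$ regardless of $N$, as in types (C) and (D). Your final conclusion, that case (B) is characterised by ``$\L$ selfinjective and $\G$ non-selfinjective'', is correct; just make explicit that the equivalence $N\ne0\Leftrightarrow M\notin\P(\L)$ is being asserted under the selfinjectivity hypothesis (where $\P_n\vee\I_n=\P(\L)$).
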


\begin{proof}
  Let $\L$ be a selfinjective Artin algebra with 
  non-projective module $N$ such that $M=\L\oplus N$ is an
  $n$-precluster tilting $\L$-module. We
  want to show that $\G=\End_\L(M)$ is a non-selfinjective
  $n$-minimal Auslander--Gorenstein algebra
  and $\add \Top {_\G I} = \add\Soc {_\G I}$,
  where $I$ is an additive generator of $\P(\G)\cap\I(\G)$. 

 Let $G=F_{ M}\subseteq \Ext^1_\L(-,-)$.
 By Proposition \ref{prop:endoring} we have that $\G$ is a
  non-selfinjective $n$-minimal Auslander--Gorenstein algebra,
  $M$ is a $G$-cotilting $\L$-module, and $I ={_\G M}$
  is an additive generator of $\P(\G)\cap\I(\G)$.
By \cite[Lemma 2.3(b)]{ASIV} we have that
\[\Hom_\G(-,I)\colon \Hom_\G(A,C)\to \Hom_\L(\Hom_\G(C,I),\Hom_\G(A,I))\]
is an isomorphism for all $C$ in $\mod\G$ and all $A$ in $\add I$. 
In particular for $A=I$ and noting that $\Hom_\G(I,I)\simeq \L$ with
$\L$ selfinjective, we infer that $\Hom_\G(I,C)=0$ if and only if
$\Hom_\G(C,I)=0$. It follows from this that $\add \Top {_\G I}=\add
\Soc{_\G I}$. 

Conversely, let $\G$ be a non-selfinjective $n$-minimal Auslander--Gorenstein algebra,
such that $\add\Top {_\G I} = \add\Soc {_\G I}$ where $I$ is an additive generator
of $\P(\G)\cap\I(\G)$. We want to show that $\L=\End_\G(I)$
is selfinjective, and $M={_\L I}\simeq \L\oplus N$ with
$N$ non-projective such that $M$ is an
$n$-precluster tilting $\L$-module.

Let $G=F_M\subseteq \Ext^1_\L(-,-)$.
By Proposition \ref{prop:redtoendoring} the module $M$ is an $n$-precluster tilting
$\L$-module and $M={_\L I}$ is a $G$-cotilting $\L$-module. The functor
$\Hom_\G(-,I)\colon \mod\G\to \mod\L$ induces a duality between
$\P(\G)$ and $\add M$. Since $M$ is a cogenerator, there exists $P$ in
$\P(\G)$ such that $\Hom_\G(P,I)\simeq D(\L_\L)$. 
By \cite[Lemma 2.3(b)]{ASIV} we have that
\[\Hom_\G(-,I)\colon \Hom_\G(P,C)\to \Hom_\L(\Hom_\G(C,I),\Hom_\G(P,I))\]
is an isomorphism for all $C$ in $\mod\G$. This implies by the choice
of $P$ that $\Hom_\G(P,C)=0$ if and only if
$\Hom_\G(C,I)=0$. Hence $\add \Top {_\G P} = \add \Soc {_\G I} = \add
\Top {_\G I}$, and we obtain that $\add P=\add I$. Therefore
$\I(\L)=\add\Hom_\G(P,I)=\P(\L)$ and $\L$ is selfinjective.
\end{proof}

\begin{remark}\label{from A to B}
  By Proposition \ref{prop:case-B}, if $\G$
  is an $n$-minimal Auslander--Gorenstein algebra of type (B), then
  there exists a selfinjective algebra $\L$ and a non-projective $n$-precluster tilting
  $\L$-module $\L\oplus N$ such that $\tau_n(N)\simeq N$ and
  $\G\simeq \End_\L(\L\oplus N)$.
\end{remark}

Now we complete the classification. 
\begin{thm}
  Let $\G$ be an $n$-minimal Auslander--Gorenstein algebra for
  $n\geq 1$. Denote by $I$ an additive generator of $\P(\G)\cap\I(\G)$,
  and let $\L=\End_\G(I)$.  Then four disjoint cases occur, where $\sigma_n$ is
  the functor given in \textup{Definition} \ref{define sigma}:
\begin{enumerate}[\rm(A)]
\item $\G$ is selfinjective, 
\item $\G$ is not selfinjective and $\add \Top {_\G I} = \add\Soc {_\G I}$, 
\item $\add \Top {_\G I} \neq \add\Soc {_\G I}$, and there exists no
  indecomposable projective $\G$-module $P$ such that
  $(\sigma_n)^t(P)\simeq P$ for some $t>0$, 
\item $\add \Top {_\G I} \not\neq\add\Soc {_\G I}$, and there exists an 
  indecomposable projective $\G$-module $P$ such that
  $(\sigma_n)^t(P)\simeq P$ for some $t>0$.
\end{enumerate}
In the cases \emph{(B)}, \emph{(C)} and \emph{(D)} above, the
correspondence given in \textup{Theorem \ref{AS correspondence}} shows that
the algebra $\G$ arises as follows.
\begin{enumerate}[\rm(a)]
\item In case \emph{(B)} there exists a selfinjective algebra $\L$
  with an $n$-precluster tilting $\L$-module $M$ such that
  $\G\simeq \End_\L(M)$.
\item In case \emph{(C)} there exists an algebra $\L$ with an $n$-precluster
  tilting $\L$-module $M$ with no non-zero $\tau_n$-periodic direct summand
  such that $\G \simeq \End_\L(M)$. 
\item In case \emph{(D)} there exists an algebra $\L$ with an $n$-precluster
  tilting $\L$-module $M$ with a non-zero $\tau_n$-periodic direct summand
  such that $\G \simeq \End_\L(M)$.
\end{enumerate}
\end{thm}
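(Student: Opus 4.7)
The approach is to parametrise $n$-minimal Auslander--Gorenstein algebras via the Auslander--Solberg correspondence (Theorem \ref{AS correspondence}) and then read off the four cases from the internal structure of the associated $n$-precluster tilting module $M$. Writing $\G \simeq \End_\L(M)$ with $M$ $n$-precluster tilting, Proposition \ref{lem:preclustercategory}(d) supplies a decomposition $\add M = \add\{\P_n \vee \I_n, N\}$ with $(\P_n\vee\I_n)\cap\add N=0$ and $\tau_n(\add N) = \add N$; since $\add M$ is finite, $\tau_n$ permutes the indecomposable summands of $N$, so each is $\tau_n$-periodic. Conversely, an indecomposable summand of $M$ lying in $\P_n \vee \I_n$ has the form $\tau_n^{-i}(P)$ with $P \in \P(\L)$ (or dually for summands in $\I_n$), and applying $\tau_n$ enough times lands it in $\P(\L)$, which vanishes in $\underline{\mod}\,\L$. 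Hence such a summand cannot be $\tau_n$-periodic. Thus $N \neq 0$ is equivalent to the existence of a $\tau_n$-periodic indecomposable summand of $M$, and the four cases (A)--(D) correspond to the two independent binary choices ``$\L$ selfinjective or not'' and ``$N = 0$ or not''.

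The easy cases fall out immediately. Case (A) coincides with ``$\G$ selfinjective'' by Proposition \ref{prop:redtoendoring}(c): if $\G$ is selfinjective then $M$ is projective, so the generator-cogenerator property forces $\add M = \P(\L) = \I(\L)$, whence $\L$ is selfinjective and $N = 0$. Case (B) is exactly Proposition \ref{prop:case-B}. Consequently the condition $\add \Top {_\G I} \neq \add \Soc {_\G I}$ singles out $\L$ non-selfinjective, which is the common feature of the remaining cases (C) and (D).

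To separate (C) from (D), I would transport the condition ``$N \neq 0$'' across the restriction $\Hom_\L(-, M)\colon \C \to \P(\G)$ of the duality in Theorem \ref{CM duality}; this is a bijection on indecomposables identifying $\P_n \vee \I_n$ with $\add I$. By Definition \ref{define sigma}, for an indecomposable $P = \Hom_\L(C, M) \in \P(\G)$ with $C \in \C$, the $t$-th iterate of $\sigma_n$ is represented on canonical representatives by $\Hom_\L((\tau_n^-)^t(C), M)$, the iteration being well defined because $\tau_n^-(\C) \subseteq \C$ by Proposition \ref{lem:preclustercategory}. Therefore $P$ is $\sigma_n$-periodic if and only if $C$ is $\tau_n$-periodic if and only if $C \in \add N$, and existence of such a $P$ is thus equivalent to $N \neq 0$, separating (D) from (C). The three ``arising from'' statements (a)--(c) are then an immediate reading of Theorem \ref{AS correspondence} in each case, using the properties of $(\L, M)$ established above.

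The principal technical point to verify is the rigorous interpretation of $(\sigma_n)^t$: since $\sigma_n$ is defined as an equivalence between the \emph{distinct} quotients $(\CM\G)/[\add Q]$ and $(\CM\G)/[\add I]$, one must check that iterating on the canonical representatives $\Hom_\L((\tau_n^-)^t(C), M)$ gives a coherent scheme and that an isomorphism $(\sigma_n)^t(P) \simeq P$ in the quotient reflects back to $(\tau_n^-)^t(C) \simeq C$ in $\mod\L$. This is safe for our periodic $C$ because $C \in \add N$ forces $C \notin \P(\L) \cup \I(\L)$, so $C$ is non-zero in every stable category in sight, ruling out the pathological possibility that $(\tau_n^-)^t(C)$ and $C$ agree only up to projective or injective summands.
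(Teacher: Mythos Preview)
Your proof is correct and follows essentially the same route as the paper's: reduce to the $(\L,M)$ side via Theorem \ref{AS correspondence}, handle cases (A) and (B) by Propositions \ref{prop:endoring}/\ref{prop:redtoendoring}(c) and \ref{prop:case-B}, and separate (C) from (D) by transporting $\tau_n$-periodicity across the commutative diagrams \eqref{sigma} to $\sigma_n$-periodicity. The paper's own proof is a two-line summary of exactly this.

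One small slip: the duality $\Hom_\L(-,M)\colon\C\to\P(\G)$ does \emph{not} identify $\P_n\vee\I_n$ with $\add I$; it identifies $\P(\L)$ with $\add I$ and $\I(\L)$ with $\add Q$ (see the proof of Proposition \ref{prop:endoring}). Fortunately you never use that misstatement---your actual argument only needs that indecomposables in $\C$ correspond to indecomposables in $\P(\G)$ and that $\sigma_n$ corresponds to $\tau_n^-$, which is precisely the content of diagram \eqref{sigma}.
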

\begin{proof}
  The cases (A) and (B) are already discussed before in
  Proposition \ref{prop:case-B}.  The cases (A) and (B) occur if and
  only if $\add \Top {_\G I} = \add\Soc {_\G I}$. Hence the cases (C) and (D)
  occur if and only if $\add \Top {_\G I} \neq \add\Soc {_\G I}$.  Using this and the commutative diagram \eqref{sigma}, the characterizations of the cases (C) and (D) follow.
\end{proof}

\begin{remark}
  As we saw in Remark \ref{from A to B}, any algebra of type (B) can be
  constructed from an algebra of type (A). 
  Now we argue that type (D) and type (C) are related in a
  similar fashion.  Let $\G$ be an $n$-minimal Auslander--Gorenstein algebra of type
  (D).  Let $I$ denote an additive generator of $\P(\G)\cap\I(\G)$, and let
  $\L=\End_\G(I)$ and $M={_\L I}$.  Decompose $M = M_0\oplus
  N$ with $\add M_0 = \P_n\vee\I_n$ and $\tau_n(N) \simeq N$.  Furthermore,
  $\G_0=\End_\G(M_0)$ is an $n$-minimal Auslander--Gorenstein algebra of type (C)
  since $M_0$ is also an $n$-precluster tilting $\L$-module.
  Now we claim that $\G$ can be constructed from $\G_0$.

Since $M_0$ is a cogenerator for $\mod\L$, the functor 
\[\Hom_\L(-,M_0)\colon \mod\L \to \mod\G_0\]
is full and faithful.  This implies that 
\[\G =\Hom_\L(M,M) \simeq \End_{\G_0}(\Hom_\L(M,M_0)).\]
Hence $\G$ is constructed from the algebra $\G_0$ which is of type
(C). 
\end{remark}

\end{document}